\newtheorem{theorem}{Theorem}[section]
\newtheorem{lemma}[theorem]{Lemma}
\newtheorem{proposition}[theorem]{Proposition}
\newtheorem{observation}[theorem]{Observation}
\newtheorem{problem}[theorem]{Problem}
\newtheorem{example}[theorem]{Example}
\theoremstyle{definition}
\newtheorem{definition}[theorem]{Definition}
\theoremstyle{remark}
\newtheorem{remark}[theorem]{Remark}
\newcommand\remove[1]{}
\def\cay{\hskip0.02cm{\rm Cay}\hskip0.01cm}
\def\f2{\mathbb{F}_2}
\newcommand{\RNP}{{\rm RNP}}
\newcommand{\ep}{\varepsilon}
\newcommand{\diam}{{\rm diam}\hskip0.02cm}
\begin{document}

\title{\LARGE Metric characterizations  of some classes of Banach spaces}

\author{Mikhail Ostrovskii}

\date{\today}
\maketitle

\begin{large}

\hfill{\sl Dedicated to the memory of Cora Sadosky}\vskip1cm

\noindent{\bf Abstract.} The main purpose of the paper is to
present some recent results on metric characterizations of
superreflexivity and the Radon-Nikod\'ym property.
\medskip

\noindent{\bf Keywords:} Banach space, bi-Lipschitz embedding,
Heisenberg group, Markov convexity, superreflexive Banach space,
thick family of geodesics, Radon-Nikod\'ym property
\medskip

\noindent{\bf 2010 Mathematics Subject Classification.} Primary:
46B85; Secondary: 05C12, 20F67, 30L05, 46B07, 46B22.
\medskip

\tableofcontents

\section{Introduction}

By a \emph{metric characterization} of a class of Banach spaces in
the most general sense we mean a characterization which refers
only to the metric structure of a Banach space and does not
involve the linear structure. Some origins of the idea of a metric
characterization can be seen in the classical theorem of Mazur and
Ulam \cite{MU32}: Two Banach spaces (over reals) are isometric as
metric spaces if and only if they are linearly isometric as Banach
spaces.

However study of metric characterizations became an active
research direction only in mid-1980s, in the work of Bourgain
\cite{Bou86} and Bourgain-Milman-Wolfson \cite{BMW86}. This study
was motivated by the following result of Ribe \cite{Rib76}.

\begin{definition} Let $X$ and $Y$ be two Banach spaces. The space $X$
is said to be \emph{finitely representable} \index{finitely
representable} in $Y$ if for any $\ep>0$ and any
finite-dimensional subspace $F\subset X$ there exists a
finite-dimensional subspace $G\subset Y$ such that $d(F,G)<1+\ep$,
where $d(F,G)$ is the Banach-Mazur distance.

The space $X$ is said to be \emph{crudely finitely representable}
\index{crudely finitely representable} in $Y$ if there exists
$1\le C<\infty$ such that for any finite-dimensional subspace
$F\subset X$ there exists a finite-dimensional subspace $G\subset
Y$ such that $d(F,G)\le C$.
\end{definition}

\begin{theorem}[Ribe \cite{Rib76}]\label{T:Ribe} Let $Z$ and $Y$ be Banach spaces. If
$Z$ and $Y$ are uniformly homeomorphic, then $Z$ and $Y$ are
crudely finitely representable in each other.
\end{theorem}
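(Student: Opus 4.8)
The plan is to prove the statement in the direction that $Z$ is crudely finitely representable in $Y$; the reverse direction follows by applying the same argument to the inverse homeomorphism. So fix a uniform homeomorphism $f\colon Z\to Y$, normalized so that $f(0)=0$. First I would establish that $f$ and $f^{-1}$ are \emph{Lipschitz for large distances}: for every $a>0$ there is $C_a<\infty$ with $\|f(x)-f(y)\|\le C_a\|x-y\|$ whenever $\|x-y\|\ge a$, and symmetrically for $f^{-1}$. This is the standard Corson--Klee argument: uniform continuity bounds the modulus $\omega_f(a)<\infty$ at a fixed scale $a$, and chaining $f$ across a segment from $x$ to $y$ in roughly $\|x-y\|/a$ steps of length $a$ yields growth of $\|f(x)-f(y)\|$ that is linear in $\|x-y\|$ once $\|x-y\|\ge a$.

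Next I would pass to large scales by rescaling. For $t>0$ set $f_t(x)=t^{-1}f(tx)$. Since $\|tx-ty\|=t\|x-y\|\ge a$ for all fixed $x\neq y$ once $t$ is large, the large-distance bounds turn into genuine bi-Lipschitz bounds on each bounded subset of $Z$, with constants $c,C$ independent of $t$. Fixing a nonprincipal ultrafilter $\mathcal{U}$ on $(0,\infty)$ refining the neighborhood filter of $+\infty$, I would define $g\colon Z\to Y_{\mathcal{U}}$ into the Banach-space ultrapower of $Y$ by $g(x)=[(f_t(x))_t]_{\mathcal{U}}$. The uniform bi-Lipschitz bounds make $g$ a well-defined bi-Lipschitz embedding satisfying $c\|u-v\|\le\|g(u)-g(v)\|\le C\|u-v\|$, and the reindexing $s=\lambda t$ shows $g$ is positively homogeneous, $g(\lambda x)=\lambda g(x)$ for $\lambda\ge 0$.

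Now fix a finite-dimensional subspace $F\subset Z$ and consider $g|_F\colon F\to Y_{\mathcal{U}}$, a bi-Lipschitz map out of a finite-dimensional space. At any point $x_0$ where $g|_F$ is Gateaux (equivalently, since $\dim F<\infty$, Fr\'echet) differentiable, the derivative $T=Dg(x_0)$ is a bounded linear operator, and the two-sided Lipschitz bounds pass to the derivative to give $c\|w\|\le\|Tw\|\le C\|w\|$; thus $T$ is a linear isomorphic embedding of $F$ into $Y_{\mathcal{U}}$ of distortion at most $C/c$. Finally, the local structure of the ultrapower---every finite-dimensional subspace of $Y_{\mathcal{U}}$ is $(1+\ep)$-representable in $Y$ for each $\ep>0$---lets me replace $T(F)\subset Y_{\mathcal{U}}$ by a genuine subspace $G\subset Y$ with $d(F,G)\le C/c$. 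As $C/c$ depends only on $f$, this is exactly crude finite representability.

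The main obstacle is the differentiation step. Rademacher's theorem guarantees almost-everywhere differentiability of a Lipschitz map out of finite-dimensional $F$ only when the target has the Radon--Nikod\'ym property, and the ultrapower $Y_{\mathcal{U}}$ typically fails RNP. I expect to circumvent this in the spirit of Heinrich--Mankiewicz: exploit that $\dim F<\infty$, so the relevant difference quotients range over a separable part of $Y_{\mathcal{U}}$, and pass if necessary to a further ultrapower in which the net of difference quotients at a suitable point converges, producing a linear map that inherits the bi-Lipschitz bounds. Because the conclusion asks only for a uniform distortion bound rather than near-isometry, the slack in the $(1+\ep)$ local-representability step and in the choice of differentiation point is harmless.
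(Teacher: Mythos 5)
The paper itself contains no proof of Theorem \ref{T:Ribe}: it only lists three known proofs (Ribe's original argument, the Heinrich--Mankiewicz ultraproduct proof, and Bourgain's quantitative discretization proof). Your proposal follows the second of these routes, so it must be measured against the Heinrich--Mankiewicz argument. Your opening steps are indeed the correct beginning of that argument: the Corson--Klee ``Lipschitz for large distances'' estimates, the rescalings $f_t$, the ultrapower map $g\colon Z\to Y_{\mathcal{U}}$ with two-sided bounds depending only on $f$, and the $(1+\varepsilon)$-finite representability of $Y_{\mathcal{U}}$ in $Y$ are all standard and correct. (A minor inaccuracy: positive homogeneity of $g$ does not follow from ``reindexing $s=\lambda t$,'' since a nonprincipal ultrafilter on $(0,\infty)$ cannot be invariant under the dilations $t\mapsto\lambda t$, $\lambda\neq 1$; but you never actually use homogeneity.)

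The genuine gap is the linearization step, exactly the point your final paragraph acknowledges and then waves away. The proposed fix --- pass to a further ultrapower in which the net of difference quotients at a suitable point converges, ``producing a linear map that inherits the bi-Lipschitz bounds'' --- cannot work as stated. Ultralimits of the difference quotients $D_t(u)=\bigl(g(x_0+tu)-g(x_0)\bigr)/t$ always exist in a further ultrapower, and they do preserve both norm bounds (ultrapower norms are limits of norms), but the resulting map $u\mapsto\lim_{\mathcal{V}}D_t(u)$ is merely another bi-Lipschitz, in general nonlinear, map (a blow-up of $g$); additivity in $u$ is precisely what honest convergence of the difference quotients --- that is, differentiability --- would provide, and that is exactly what fails when the target lacks the Radon-Nikod\'ym property. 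Conversely, the mechanism Heinrich and Mankiewicz actually use, weak$^*$-Gateaux differentiation of Lipschitz maps from separable spaces into dual spaces (applied to $g|_F$ viewed as a map into $W^{**}$, where $W$ is the closed linear span of $g(F)$), does produce a linear $T$, but then the \emph{lower} bound is what is lost: the dual norm is only weak$^*$ lower semicontinuous, which yields $\|Tu\|\le C\|u\|$ but not $\|Tu\|\ge c\|u\|$, because weak$^*$ limits of the difference quotients can collapse in norm. Recovering the lower bound for the derivative, and then descending from $W^{**}$ back to $Y_{\mathcal{U}}$ via the principle of local reflexivity (a step also absent from your outline), is the real content of the Heinrich--Mankiewicz proof; the same difficulty is what Ribe's original argument and Bourgain's discretization theorem overcome by different means. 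As written, your proposal reduces Ribe's theorem to the statement that a bi-Lipschitz embedding of a finite-dimensional space into an arbitrary Banach space can be replaced by a linear embedding of comparable distortion --- which is essentially the theorem itself --- without proving it.
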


Three proofs of this theorem are known at the moment:

\begin{itemize}

\item The original proof of Ribe \cite{Rib76}. Some versions of it
were presented in Enflo's survey \cite{Enf76} and the book by
Benyamini and Lindenstrauss \cite[pp.~222--224]{BL00}.

\item The proof of Heinrich-Mankiewicz \cite{HM82} based on
ultraproduct techniques, also presented in \cite{BL00}.

\item The proof of Bourgain \cite{Bou87} containing related
quantitative estimates. This paper is a very difficult reading.
The proof has been clarified and simplified by
Giladi-Naor-Schechtman \cite{GNS12} (one of the steps was
simplified earlier by Begun \cite{Beg99}). The presentation of
\cite{GNS12} is easy to understand, but some of the $\ep$-$\delta$
ends in it do not meet. I tried to fix this when I presented this
result in my book \cite[Section 9.2]{Ost13a} (let me know if you
find any problems with $\ep$-$\delta$ choices there).
\end{itemize}

These three proofs develop a wide spectrum of methods of the
nonlinear Banach space theory and are well worth studying.
\medskip

The Ribe theorem implies stability under uniform homeomorphisms of
each class $\mathcal{P}$ of Banach spaces satisfying the following
condition LHI (local, hereditary, isomorphic): if
$X\in\mathcal{P}$ and $Y$ crudely finitely representable in $X$,
then $Y\in \mathcal{P}$.
\medskip

The following well-known classes have the described property:

\begin{itemize}

\item superreflexive spaces (see the definition and related
results in Section \ref{S:SuperR} of this paper),

\item spaces having cotype $q$, $q\in[2,\infty)$ (the definitions
of type and cotype can be found, for example, in \cite[Section
2.4]{Ost13a}),

\item spaces having cotype $r$ for each $r>q$ where
$q\in[2,\infty)$,

\item spaces having type $p$, $p\in(1,2]$,

\item spaces having type $r$ for each $r<p$ where $p\in(1,2]$;

\item Banach spaces isomorphic to $q$-convex spaces
$q\in[2,\infty)$ (see Definition \ref{D:ModConvQconv} below, more
details can be found, for example, in \cite[Section 8.4]{Ost13a}),

\item Banach spaces isomorphic to $p$-smooth spaces $p\in(1,2]$
(see Definition \ref{D:Psmooth} and \cite[Section 8.4]{Ost13a}),

\item UMD (unconditional for martingale differences) spaces
(recommended source for information on the UMD property is the
forthcoming book \cite{Pis14+}),

\item Intersections of some collections of classes described
above,

\item One of such intersections is the class of spaces isomorphic
to Hilbert spaces (by the Kwapie\'n theorem \cite{Kwa72}, each
Banach space having both type $2$ and cotype $2$ is isomorphic to
a Hilbert space),

\item Banach spaces isomorphic to subspaces of the space
$L_p(\Omega,\Sigma,\mu)$ for some measure space
$(\Omega,\Sigma,\mu)$, $p\ne 2,\infty$ (for $p=\infty$ we get the
class of all Banach spaces, for $p=2$ we get the class of spaces
isomorphic to Hilbert spaces).

\end{itemize}

\begin{remark} This list seem to constitute
the list of all classes of Banach spaces satisfying the condition
LHI which were systematically studied.
\end{remark}

By the Ribe Theorem (Theorem \ref{T:Ribe}), one can expect that
each class satisfying the condition LHI has a metric
characterization. At this point metric characterizations are known
for all classes listed above except $p$-smooth and UMD (and some
of the intersections involving these classes). Here are the
references:

\begin{itemize}

\item Superreflexivity - see Section \ref{S:SuperR} of this paper
for a detailed account.

\item Properties related to type - \cite{MN07} (see \cite{Enf78},
\cite{BMW86}, and \cite{Pis86} for previous important results in
this direction, and \cite{GN10} for some improvements).

\item Properties related to cotype - \cite{MN08}, see \cite{GMN11}
for some improvements.

\item $q$-convexity - \cite{MN13}.

\item Spaces isomorphic to subspaces of $L_p$ $(p\ne 2,\infty)$.
Rabinovich noticed that one can generalize results of \cite{LLR95}
and characterize the optimal distortion of embeddings of a finite
metric space into $L_p$-space (see \cite[Exercise 4 on p.~383 and
comment of p.~380]{Mat02} and a detailed presentation in
\cite[Section 4.3]{Ost13a}). Johnson, Mendel, and Schechtman
(unpublished) found another characterization of the optimal
distortion using a modification of the argument of Lindenstrauss
and Pe\l czy\'nski \cite[Theorem 7.3]{LP68}. These
characterizations are very close to each other. They are not
satisfactory in some respects.

\end{itemize}

\subsection{Ribe program}\label{S:Ribe}

It should be mentioned that some of the metric characterizations
(for example of the class of spaces having some type $>1$)  can be
derived from the known `linear' theory. Substantially nonlinear
characterizations started with the paper of Bourgain \cite{Bou86}
in which he characterized superreflexive Banach spaces in terms of
binary trees.\medskip

This paper of Bourgain and the whole direction of metric
characterizations was inspired by the unpublished paper of Joram
Lindenstrauss with the tentative title ``Topics in the geometry of
metric spaces''. This paper has never been published (and
apparently has never been written, so it looks like it was just a
{\it conversation}, and not a {\it paper}), but it had a
significant impact on this direction of research. The unpublished
paper of Lindenstrauss and the mentioned paper of Bourgain
\cite{Bou86} initiated what is now known as the {\it Ribe
program}.

Bourgain \cite[p.~222]{Bou86} formulated it as the program of
search for equivalent definitions of different LHI invariants in
terms of metric structure with the next step consisting in
studying these metrical concepts in general metric spaces in an
attempt to develop an analogue of the linear theory.

Bourgain himself made several important contributions to the Ribe
program, now it is a very deep and extensive research direction.
In words of Ball \cite{Bal13}: ``Within a decade or two the Ribe
programme acquired an importance that would have been hard to
predict at the outset''. In this paper I am going to cover only a
very small part of known results on this program. I refer
interested people to the surveys of Ball \cite{Bal13} (short
survey) and Naor \cite{Nao12} (extensive survey).\medskip

Many of the known metric characterizations use the following
standard definitions.

\begin{definition}\label{D:lip} Let $0\le C<\infty$. A map $f: (A,d_A)\to (Y,d_Y)$ between two
metric spaces is called $C$-{\it Lipschitz} if
\[\forall u,v\in A\quad d_Y(f(u),f(v))\le Cd_A(u,v).\] A map $f$
is called {\it Lipschitz} if it is $C$-Lipschitz for some $0\le
C<\infty$.

Let $1\le C<\infty$. A map $f:A\to Y$ is called a {\it
$C$-bilipschitz embedding} if there exists $r>0$ such that
\begin{equation}\label{E:MapDist}\forall u,v\in A\quad rd_A(u,v)\le
d_Y(f(u),f(v))\le rCd_A(u,v).\end{equation} A {\it bilipschitz
embedding} is an embedding which is $C$-bilipschitz for some $1\le
C<\infty$. The smallest constant $C$ for which there exist $r>0$
such that \eqref{E:MapDist} is satisfied is called the {\it
distortion} of $f$.
\end{definition}

\remove{We shall restrict our attention to classes $\mathcal{P}$
of Banach spaces which are \emph{hereditary} and \emph{isomorphic
invariant}. {\it Hereditary}  means that if $X\in\mathcal{P}$ and
$Y$ is a closed subspace of $X$, then $Y\in \mathcal{P}$. {\it
Isomorphic invariant} means that $X\in\mathcal{P}$ implies that
all Banach spaces isomorphic to $X$ are also in $\mathcal{P}$. The
reason is that it is difficult to see any perspectives for metric
characterizations of types which we are going to consider for
classes which do not satisfy these conditions.
\medskip}

There are at least two directions in which we can seek metric
characterizations:

\begin{itemize}

\item[{\bf (1)}] We can try to characterize metric spaces which
admit bilipschitz embeddings into some Banach spaces belonging to
$\mathcal{P}$.

\item[{\bf (2)}] We can try to find metric structures which are
present in each Banach space $X\notin \mathcal{P}$.

\end{itemize}

Characterizations of type {\bf (1)} would be much more interesting
for applications. However, as far as I know such characterizations
were found only in the following cases: (i) $\mathcal{P}=\{$the
class of Banach spaces isomorphic to a Hilbert space$\}$ (it is
the Linial-London-Rabinovich \cite[Corollary 3.5]{LLR95} formula
for distortion of  embeddings of a finite metric space into
$\ell_2$). (ii) $\mathcal{P}=\{$the class of Banach spaces
isomorphic to a subspace of some $L_p$-space$\}$, $p$ is a fixed
number $p\ne2,\infty$, see the last paragraph preceding Section
\ref{S:Ribe}.

\subsection{Local properties for which no metric characterization is known}

\begin{problem}\label{P:UMD} Find a metric characterization of UMD.
\end{problem}

Here {\it UMD} stays for {\it unconditional for martingale
differences}. The most comprehensive source of information on UMD
is the forthcoming book of Pisier \cite{Pis14+}.
\smallskip

I have not found in the literature any traces of attempts to work
on Problem \ref{P:UMD}.
\bigskip

\begin{definition}\label{D:Psmooth} A Banach space is called {\it $p$-smooth} if its modulus of smoothness
satisfies $\rho(t)\le Ct^p$ for $p\in(1,2]$.
\end{definition}

See \cite[Section 8.4]{Ost13a} for information on $p$-smooth
spaces.

\begin{problem}\label{P:p-smooth} Find a metric characterization of the class of
Banach spaces isomorphic to $p$-smooth spaces $p\in(1,2]$.
\end{problem}

This problem was posed and discussed in the paper by Mendel and
Naor \cite{MN13}, where a similar problem is solved for $q$-convex
spaces. Mendel and Naor wrote \cite[p.~335]{MN13}: ``Trees are
natural candidates for finite metric obstructions to
$q$-convexity, but it is unclear what would be the possible finite
metric witnesses to the ``non-$p$-smoothness'' of a metric
space''.

\section{Metric characterizations of
superreflexivity}\label{S:SuperR}

\begin{definition}[James \cite{Jam72a,Jam72b}]\label{D:SuperRefl} A Banach space $X$ is called \emph{super\-re\-fle\-xive} if each Banach space which is finitely representable in $X$
is reflexive.
\end{definition}

It might look like a rather peculiar definition, but, as I
understand, introducing it ($\approx 1967$) James  already had a
feeling that it is a very natural and important definition. This
feeling was shown to be completely justified when Enflo
\cite{Enf72} completed the series of results of James by proving
that each superreflexive space has an equivalent uniformly convex
norm.

\begin{definition}\label{D:UC} {\rm A Banach space is called
\emph{uniformly convex}  if for every $\ep > 0$ there is some
$\delta > 0$ so that for any two vectors with $\|x\|\le1$ and
$\|y\|\le 1$, the inequality
\[ 1-\left\|\frac{x+y}2\right\|<\delta\] implies
\[   \|x-y\|<\varepsilon.\]}
\end{definition}

\begin{definition}\label{D:EquivNorm} Two norms $||\cdot||_1$ and
$||\cdot||_2$ on a linear space $X$ are called \emph{equivalent}
if there are constants $0<c\le C<\infty$ such that \[c||x||_1\le
||x||_2\le C||x||_1\] for each $x\in X$.
\end{definition}

After the pioneering results of James and Enflo numerous
equivalent reformulations of superreflexivity were found and
superreflexivity was used in many different contexts.

The metric characterizations of superreflexivity which we are
going to present belong to the class of so-called \emph{test-space
characterizations}.
\medskip

\begin{definition} Let $\mathcal{P}$ be a class of Banach
spaces and let $T=\{T_\alpha\}_{\alpha\in A}$ be a set of metric
spaces. We say that $T$ is a set of {\it test-spaces} for
$\mathcal{P}$ if the following two conditions are equivalent: {\bf
(1)} $X\notin\mathcal{P}$; {\bf (2)} The spaces
$\{T_\alpha\}_{\alpha\in A}$ admit bilipschitz embeddings into $X$
with uniformly bounded distortions.
\end{definition}

\subsection{Characterization of superreflexivity in terms of binary
trees}

\begin{definition}\label{D:trees} A {\it binary tree of  depth $n$}, denoted $T_n$, is a
finite graph in which each vertex is represented by a finite
(possibly empty) sequence of $0$ and $1$, of length at most $n$.
Two vertices in $T_n$ are adjacent if the sequence corresponding
to one of them is obtained from the sequence corresponding to the
other by adding one term on the right. (For example, vertices
corresponding to $(1,1,1,0)$ and $(1,1,1,0,1)$ are adjacent.) A
vertex corresponding to a sequence of length $n$ in $T_n$ is
called a {\it leaf}.

An {\it infinite binary tree}, denoted $T_\infty$, is an infinite
graph in which each vertex is represented by a finite (possibly
empty) sequence of $0$ and $1$. Two vertices in $T_\infty$ are
adjacent if the sequence corresponding to one of them is obtained
from the sequence corresponding to the other by adding one term on
the right.

Both for finite and infinite binary trees we use the following
terminology. The vertex corresponding to the empty sequence is
called a {\it root}. If a sequence $\tau$ is an initial segment of
the sequence $\sigma$ we say that $\sigma$ is a {\it descendant}
of $\tau$ and that $\tau$ is an {\it ancestor} of $\sigma$. If a
descendant $\sigma$ of $\tau$ is adjacent to $\tau$, we say that
$\sigma$ is a {\it child} of $\tau$ and that $\tau$ is a {\it
parent} of $\sigma$. Two children of the same parent are called
{\it siblings}. Child of a child is called a {\it grandchild}. (It
is clear that each vertex in $T_\infty$ has exactly two children,
the same is true for all vertices of $T_n$ except leaves.)
\end{definition}

\begin{theorem}[Bourgain \cite{Bou86}]\label{T:Bourgain} A Banach space $X$ is
nonsuperreflexive if and only if it admits bilipschitz embeddings
with uniformly bounded distortions of finite binary trees
$\{T_n\}_{n=1}^\infty$ of all depths.
\end{theorem}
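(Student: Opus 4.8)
The statement is an equivalence, so I would prove the two implications separately; the forward implication (from nonsuperreflexivity to the existence of uniform tree embeddings) is the softer one, while the reverse implication is where the geometry of superreflexive spaces does the real work.

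\textbf{From nonsuperreflexivity to uniform embeddings.} First I would invoke James's quantitative description of the failure of superreflexivity: if $X$ is nonsuperreflexive, then for every $\theta\in(0,1)$ and every $n$ there are vectors $x_1,\dots,x_n\in B_X$ and functionals $x_1^*,\dots,x_n^*\in B_{X^*}$ with $x_i^*(x_j)=\theta$ for $i\le j$ and $x_i^*(x_j)=0$ for $i>j$; crucially the constants do not depend on $n$. Given such a configuration of ``depth'' $n$, I would define an embedding $f\colon T_n\to X$ by summing these vectors along the root-to-vertex path, the sign (or inclusion) of $x_i$ being dictated by the $i$-th coordinate of the vertex. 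The upper Lipschitz bound is immediate from $\|x_i\|\le1$ and the triangle inequality, since consecutive vertices differ in one summand. For the lower bound I would test the difference $f(\sigma)-f(\tau)$ of two vertices against the functional $x_j^*$ associated with their branching level $j$: the one-sided James condition is exactly what makes this pairing proportional to the graph distance, with proportionality constant controlled by $\theta$. Since $\theta$ and the resulting distortion bound are independent of $n$, this produces embeddings of all $T_n$ with uniformly bounded distortion. The only delicate point here is choosing the sign convention so that the branching contribution does not cancel when evaluated on $x_j^*$; this has to be arranged carefully, but it involves no substantial difficulty.

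\textbf{From uniform embeddings to nonsuperreflexivity.} I would prove the contrapositive: if $X$ is superreflexive, then the distortion of any bilipschitz embedding of $T_n$ tends to infinity with $n$, ruling out uniform embeddings. By Enflo's theorem $X$ carries an equivalent uniformly convex norm, and by the classical power-type renorming (Pisier) one may assume the modulus of convexity satisfies $\delta_X(\ep)\ge c\,\ep^{q}$ for some $q\ge2$; distortion is unchanged up to the equivalence constants. This yields a $q$-uniform convexity inequality of the form
\begin{equation}\label{E:qUC}
\Bigl\|\tfrac{u+w}{2}\Bigr\|^{q}+\tfrac{1}{K}\Bigl\|\tfrac{u-w}{2}\Bigr\|^{q}\le\tfrac12\bigl(\|u\|^{q}+\|w\|^{q}\bigr).
\end{equation}
Given a $D$-bilipschitz $f\colon T_n\to X$, normalized so that $d_{T_n}\le\|f(\cdot)-f(\cdot)\|\le D\,d_{T_n}$, I would apply \eqref{E:qUC} at each internal vertex $v$ with children $v0,v1$, taking $u=f(v0)-f(v)$ and $w=f(v1)-f(v)$: the ``defect'' term on the left is bounded below because siblings are at graph distance $2$, while the right-hand side is at most $D^{q}$. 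This shows that at every fork the image of $v$ must sit close to the midpoint of its children's images, the closeness improving as the $q$-convexity gets stronger.

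\textbf{The main obstacle.} The crux is to convert these \emph{local} midpoint estimates into a \emph{global} lower bound on $D$ that grows with $n$. A single fork only gives a gain of order $D^{-(q-1)}$, which is useless in isolation; the content of Bourgain's theorem is that these gains accumulate across the $n$ levels of the tree rather than cancelling. I would organize this through a potential/telescoping argument: sum the inequalities \eqref{E:qUC} over all forks with appropriate level weights and compare the resulting total midpoint contraction along root-to-leaf paths against the overall spread of the image, which is at most $\sim D\cdot n$. Making the bookkeeping honest — so that the per-level convexity gains genuinely add up and force the distortion to diverge as $n\to\infty$ (Bourgain's argument in fact yields a lower bound that grows, only polylogarithmically in the depth $n$, which is precisely why one must use trees of \emph{all} depths) — is the technical heart of the proof and the step most likely to require care; the cleanest modern route replaces this ad hoc summation by the notion of Markov convexity, which is equivalent to superreflexivity and for which the binary trees are the canonical obstructions.
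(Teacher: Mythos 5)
Your direction from nonsuperreflexivity to uniform embeddings (the paper's ``only if'' part) has a genuine gap, and it sits exactly at the point you wave off as involving ``no substantial difficulty.'' If you use one James vector $x_i$ per \emph{level} and let only its sign (or inclusion) depend on the $i$-th coordinate of a vertex, cancellation is fatal, and no sign convention repairs it. Concretely, take $X=c_0$ with $x_i=\theta(e_1+\dots+e_i)$ and $x_i^*=e_i^*$: this satisfies your hypotheses exactly ($x_i^*(x_j)=\theta$ for $i\le j$, $=0$ for $i>j$). For the vertices $\sigma=(0,1,0,1,\dots)$ and $\tau=(1,0,1,0,\dots)$ of $T_n$, which branch at the root and are at tree distance $2n$, your map gives $f(\sigma)-f(\tau)=\pm 2\theta\sum_{i=1}^n(-1)^i x_i$, and the alternating sums telescope coordinatewise, so $\|f(\sigma)-f(\tau)\|_\infty\le 2\theta$; the distortion is therefore at least of order $n$. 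Consistently with this, every functional of your system pairs with this difference to give only $O(\theta)$, so no choice of test functional can certify the lower bound. The missing idea is Bourgain's combinatorial device, which the paper uses: index the James vectors by \emph{vertices}, not levels, via a bijection $\varphi:T_n\to\{1,\dots,2^{n+1}-1\}$ that maps disjoint downward intervals of the tree order starting at a common vertex into disjoint intervals of integers, and embed by $F_n(t)=\sum_{s\le t}x_{\varphi(s)}$. Then for $t_1,t_2$ with closest common ancestor $t_0$, the $+1$ coefficients along the path $t_0\to t_1$ and the $-1$ coefficients along $t_0\to t_2$ occupy disjoint integer intervals separated by a single cut $j$, and James's condition in its interval form \eqref{E:CharSR} (Theorem \ref{T:NSRChar}) yields $\|F_n(t_1)-F_n(t_2)\|\ge\alpha\, d_{T_n}(t_1,t_2)$ with no possibility of cancellation.

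In the converse direction (the paper's ``if'' part) your setup agrees with the paper's: Pisier's renorming (Theorem \ref{T:Pisier}) to get $\delta_X(\ep)\ge c\,\ep^q$, then a fork estimate producing a gain of order $D^{-(q-1)}$. But you then declare the passage from local gains to a global lower bound ``the technical heart'' and leave it unexecuted, gesturing at a weighted summation over forks or at Markov convexity without carrying either out. The paper (following Kloeckner) closes precisely this step with a self-improvement argument that needs no global bookkeeping: given a distance non-decreasing $D$-Lipschitz map of $T_n$, select inductively at each chosen vertex one grandchild through each of its two children, namely the grandchild whose image is closest to the image of that vertex; Lemma \ref{L:GrandChi} guarantees that the selected vertices, with the tree metric halved, form a copy of $T_{\lfloor n/2\rfloor}$ embedded with distortion at most $D-K/D^{q-1}$. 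Iterating $\lfloor\log_2 n\rfloor$ times and using that every distortion is $\ge 1$ gives $D\ge c_1(\log_2 n)^{1/q}$, which is the polylogarithmic divergence you anticipated. So this half of your outline is compatible with the paper's proof, but as written it asserts, rather than proves, the crucial accumulation step; you should either run Kloeckner's grandchild-selection iteration or invoke the Lee--Naor--Peres results on Markov convexity explicitly (superreflexive spaces are Markov $q$-convex, while $\Pi_p(T_{2^m})\ge 2^{1-2/p}m^{1/p}$), either of which completes the argument.
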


{

\begin{figure}
{
\begin{tikzpicture}[level/.style={sibling distance=70mm/#1}]
\node [circle,draw] (z) {\hbox{~~}}
  child {node [circle,draw] (a) {\hbox{~~}}
    child {node [circle,draw] (b) {\hbox{~~}}
      child {node [circle,draw]  {\hbox{~~}}}
      child {node [circle,draw]  {\hbox{~~}}}
    }
    child {node [circle,draw] (g) {\hbox{~~}}
      child {node [circle,draw]  {\hbox{~~}}}
      child {node [circle,draw]  {\hbox{~~}}}
    }
  }
  child {node [circle,draw] (a) {\hbox{~~}}
    child {node [circle,draw] (b) {\hbox{~~}}
      child {node [circle,draw]  {\hbox{~~}}}
      child {node [circle,draw]  {\hbox{~~}}}
    }
    child {node [circle,draw] (g) {\hbox{~~}}
      child {node [circle,draw]  {\hbox{~~}}}
      child {node [circle,draw]  {\hbox{~~}}}
      }
    };
\end{tikzpicture}
} \caption{The binary tree of depth $3$, that is, $T_3$.}
\label{F:Tree}
\end{figure}
}

In Bourgain's proof the  difficult direction is the ``if''
direction, the ``only if'' is an easy consequence of the theory of
superreflexive spaces. Recently Kloeckner \cite{Klo14} found a
very simple proof of the ``if'' direction. I plan to describe the
proofs of Bourgain and Kloeckner after recalling the results on
superreflexivity which we need.

\begin{definition}\label{D:ModConvQconv} The {\it modulus of } (uniform) {\it convexity} $\delta_X(\varepsilon)$ of
a Banach space $X$ with norm $\|\cdot\|$ is defined as
\[\inf\left\{1-\left\|\frac{x+y}2\right\| \,\middle|\,
  \| x\| = \| y\| =1 \ \textrm{and}\ \| x-y\|\geqslant \varepsilon\right\}\]
for $\varepsilon\in (0,2]$. The space $X$ or its norm is said to
be {\it $q$-convex}, $q\in[2,\infty)$ if
$\delta_X(\varepsilon)\geqslant c \varepsilon^q$ for some $c>0$.
\end{definition}

\begin{remark} It is easy to see that the definition of the
uniform convexity given in Definition \ref{D:UC} is equivalent to:
$X$ is {\it uniformly convex} if and only if
$\delta_X(\varepsilon)>0$ for each $\varepsilon\in (0,2]$.
\end{remark}

\begin{theorem}[Pisier \cite{Pis75}]\label{T:Pisier} The following properties of a Banach space $Y$ are
equivalent:

\begin{enumerate}

\item\label{I:SR} $Y$ is superreflexive.

\item\label{I:PConvex}  $Y$ has an equivalent $q$-convex norm for
some $q\in[2,\infty)$. \medskip
\end{enumerate}
\end{theorem}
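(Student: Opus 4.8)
The plan is to prove the two implications separately; the implication $(\ref{I:PConvex})\Rightarrow(\ref{I:SR})$ is routine, while $(\ref{I:SR})\Rightarrow(\ref{I:PConvex})$ carries all of the difficulty. For $(\ref{I:PConvex})\Rightarrow(\ref{I:SR})$ I would first note that a $q$-convex norm is in particular uniformly convex, since $\delta_Y(\ep)\ge c\ep^q>0$ for every $\ep\in(0,2]$, so $Y$ is reflexive by the Milman--Pettis theorem. To upgrade this to superreflexivity I would show that the power-type estimate passes, with an arbitrarily small loss in the constant, to every space $X$ that is finitely representable in $Y$. Given norm-one $x,y\in X$ with $\|x-y\|\ge\ep$, for each $\eta>0$ the subspace $F$ spanned by $x$ and $y$ admits a $(1+\eta)$-isomorphic copy $G\subset Y$; transporting the pair into $G$, applying there the bound $\delta_Y(\ep')\ge c(\ep')^q$, and carrying the resulting estimate back to $X$ yields $\delta_X(\ep)\ge c'\ep^q$ after letting $\eta\to0$. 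Hence every $X$ finitely representable in $Y$ is uniformly convex, so reflexive, which is exactly the superreflexivity of $Y$.

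For $(\ref{I:SR})\Rightarrow(\ref{I:PConvex})$, the substantive direction, I would begin by invoking Enflo's theorem \cite{Enf72}, quoted in the discussion after Definition \ref{D:SuperRefl}, to replace the norm of $Y$ by an equivalent \emph{uniformly convex} one, so that $\delta_Y(\ep)>0$ for every $\ep\in(0,2]$. The task then reduces to a single further renorming upgrading this qualitative positivity of the modulus to the quantitative power-type bound $\delta(\ep)\ge c\ep^q$. Bare uniform convexity is too weak for this step, so superreflexivity must enter in a quantitative, scale-invariant form. Thematically in keeping with the trees of Definition \ref{D:trees}, I would use James's characterization of superreflexivity through the failure of the finite tree property \cite{Jam72a,Jam72b}: for each $\ep_0>0$ there is a finite bound $N(\ep_0)$ so that the unit ball of $Y$ contains no $\ep_0$-separated dyadic tree of depth exceeding $N(\ep_0)$.

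The heart of the proof is a self-improvement (regularity) lemma for the modulus. I would recast uniform convexity in martingale language, where the relevant quantity is the martingale cotype: the existence of $q<\infty$ and $C$ with $\sum_k\Ex\|dM_k\|^q\le C^q\sup_k\Ex\|M_k\|^q$ for every finite $Y$-valued martingale $(M_k)$ with differences $dM_k$. One then shows that, after an equivalent renorming built by averaging the norm against this martingale/tree structure, the modulus becomes \emph{doubling}, $\delta(2\ep)\le K\delta(\ep)$ — the failure of the finite tree property being exactly what prevents $\delta$ from decaying faster than a fixed power. Iterating the doubling inequality downward from the single value $\delta(\ep_0)>0$ gives $\delta(\ep)\ge K^{-1}\delta(2\ep)\ge\cdots\ge K^{-n}\delta(2^n\ep)$, and choosing $n$ with $2^n\ep\approx\ep_0$ produces $\delta(\ep)\gtrsim(\ep/\ep_0)^{\log_2K}\delta(\ep_0)$, precisely a power-type lower bound with $q=\log_2K$.

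The main obstacle is this self-improvement step — converting ``$\delta>0$'' into ``$\delta\gtrsim\ep^q$.'' Uniform convexity alone does not control the \emph{rate} at which the modulus tends to $0$, and the renorming producing the doubling estimate is delicate: one must keep the new norm equivalent to the old one while arranging that the martingale-cotype inequality (equivalently the tree bound) transfers into a scale-invariant estimate on the modulus. Verifying the equivalence between the martingale formulation and the geometric modulus, and checking that the averaging renorming indeed yields the doubling property, is where essentially all of the work of Pisier's theorem \cite{Pis75} is concentrated.
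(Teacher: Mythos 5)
The paper itself offers no proof of this statement: Theorem \ref{T:Pisier} is quoted from Pisier \cite{Pis75} and used as a black box, so your attempt can only be measured against Pisier's published argument. Your direction \eqref{I:PConvex}$\Rightarrow$\eqref{I:SR} is correct and standard: the modulus of convexity is a two-dimensional quantity, so the bound $\delta_Y(\ep)\ge c\ep^q$ passes, up to $(1+\eta)$-losses that vanish as $\eta\to0$, to every space finitely representable in $Y$, and Milman--Pettis then makes all such spaces reflexive, which is exactly Definition \ref{D:SuperRefl}.

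The direction \eqref{I:SR}$\Rightarrow$\eqref{I:PConvex} contains a genuine gap, which you in effect concede in your last paragraph: the entire theorem is compressed into the unproved sentence that ``after an equivalent renorming built by averaging the norm against this martingale/tree structure, the modulus becomes doubling.'' Two separate quantitative lemmas are hiding there, and neither is formulated precisely, let alone proved: (a) a superreflexive space (equivalently, after Enflo \cite{Enf72}, a uniformly convex one) satisfies the martingale cotype inequality $\sum_k\Ex\|dM_k\|^q\le C^q\sup_k\Ex\|M_k\|^q$ for some finite $q$; and (b) this inequality forces an equivalent norm with power-type modulus. Moreover, the mechanism you propose for (b) --- first make the modulus \emph{doubling}, $\delta(2\ep)\le K\delta(\ep)$, then iterate across scales --- is not how the known proof works, and you give no indication of why any renorming would produce a doubling modulus. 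In Pisier's argument the new norm, defined for instance by $|||x|||^q=\inf\bigl\{\Ex\|M_n\|^q-\lambda\sum_{k}\Ex\|dM_k\|^q\bigr\}$ over finite martingales starting at $x$ (with $\lambda$ small relative to the cotype constant), satisfies the two-point inequality $|||\tfrac{x+y}{2}|||^q+\lambda\,\bigl\|\tfrac{x-y}{2}\bigr\|^q\le\tfrac12\bigl(|||x|||^q+|||y|||^q\bigr)$ directly, which is power-type convexity in one stroke; no doubling inequality and no iteration across scales appear anywhere. Finally, your guiding heuristic --- that failure of the finite tree property ``is exactly what prevents $\delta$ from decaying faster than a fixed power'' --- cannot be taken at face value: a two-dimensional space whose unit sphere is flat to infinite order at one point is superreflexive and uniformly convex, yet its modulus decays faster than every power, so the tree bound by itself controls nothing about the modulus of any given norm; all of the content lies in the renorming, which your proposal leaves unspecified. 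In short, you have reproduced the correct architecture of Pisier's proof (Enflo, James's trees, martingale cotype, renorming), but the two lemmas that carry all the content are missing.
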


Using Theorem \ref{T:Pisier} we can prove the ``if'' part of
Bourgain's characterization. Denote by $c_X(T_n)$ the infimum of
distortions of embeddings of the binary tree $T_n$ into a Banach
space $X$.

By Theorem \ref{T:Pisier}, for the ``if'' part of Bourgain's
theorem it suffices to prove that if $X$ is $q$-convex, then for
some $c_1>0$ we have
\[c_X(T_n)\ge c_1(\log_2n)^{\frac1q}.\]

\begin{proof}[Proof {\rm (Kloeckner \cite{Klo14})}]
Let $F$ be the four-vertex tree with one root $a_0$ which has one
child $a_1$ and two grand-children $a_2$, $a'_2$. Sometimes such
tree is called a {\it fork}, see Figure \ref{F:Fork}. The
following lemma is similar to the corresponding results in
\cite{Mat99}.

{

\begin{figure}
\begin{center}
{
\begin{tikzpicture}[level/.style={sibling distance=70mm/#1}]
\node [circle,draw] (z) {$a_0$}
  child {node [circle,draw] (a) {$a_1$}
         child {node [circle,draw]  {$a_2$}}
      child {node [circle,draw]  {$a_2'$}}
    };
\end{tikzpicture}
} \caption{A fork.} \label{F:Fork}
\end{center}
\end{figure}

}

\begin{lemma}\label{L:GrandChi}
There is a constant $K=K(X)$ such that if $\varphi:F\to X$ is
$D$-Lipschitz and distance non-decreasing, then either
\[\|\varphi(a_0)-\varphi(a_2)\|\leqslant 2\left(D-\frac{K}{D^{q-1}}\right)\]
or
\[\|\varphi(a_0)-\varphi(a'_2)\|\leqslant 2\left(D-\frac{K}{D^{q-1}}\right)\]
\end{lemma}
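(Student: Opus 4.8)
The plan is to argue by contradiction, quantifying the elementary observation that the two length-$2$ paths $a_0a_1a_2$ and $a_0a_1a_2'$ cannot both be sent by $\varphi$ onto almost-straight segments of length close to the maximum $2D$: if they were, the images of $a_2$ and $a_2'$ would be forced to nearly coincide, contradicting that they are at distance $2$. Write $x=\varphi(a_0)$, $y=\varphi(a_1)$, $z=\varphi(a_2)$, $z'=\varphi(a_2')$, and set $v=z-y$, $v'=z'-y$, $w=y-x$. Since $\varphi$ is distance non-decreasing and $D$-Lipschitz, every edge image has norm in $[1,D]$, so in particular $D\ge1$ and $\|v\|,\|v'\|,\|w\|\le D$, while $\|v-v'\|=\|z-z'\|\ge d(a_2,a_2')=2$. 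Suppose, for contradiction, that both $\|z-x\|>2(D-\alpha)$ and $\|z'-x\|>2(D-\alpha)$, where $\alpha=K/D^{q-1}$ and $K=K(X)$ is a small constant to be fixed at the end.

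The heart of the matter is a quantitative near-equality in the triangle inequality, extracted from $q$-convexity. Since $\|v+w\|=\|z-x\|>2D-2\alpha$ and $\|v\|,\|w\|\le D$, both norms must exceed $D-2\alpha$. Writing $s=\|v\|$, $t=\|w\|$ and $\hat v=v/s$, $\hat w=w/t$, the identity
$$v+w=\frac{s+t}{2}(\hat v+\hat w)+\frac{s-t}{2}(\hat v-\hat w)$$
together with $\tfrac{s+t}{2}\le D$, $\|\hat v-\hat w\|\le 2$, and $|s-t|<2\alpha$ gives $\|v+w\|\le D\|\hat v+\hat w\|+2\alpha$. If $\|\hat v-\hat w\|\ge\varepsilon$, then by Definition \ref{D:ModConvQconv} we have $\|\hat v+\hat w\|\le 2(1-\delta_X(\varepsilon))$, so $2D-2\alpha<2D(1-\delta_X(\varepsilon))+2\alpha$, whence $\delta_X(\varepsilon)<2\alpha/D$. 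Using $q$-convexity, $\delta_X(\varepsilon)\ge c\varepsilon^q$, this forces $\varepsilon<(2\alpha/(cD))^{1/q}$; consequently $\|\hat v-\hat w\|<\varepsilon_0:=(2\alpha/(cD))^{1/q}$. The identical computation applied to $z'$ yields $\|\hat{v'}-\hat w\|<\varepsilon_0$, and therefore $\|\hat v-\hat{v'}\|<2\varepsilon_0$.

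Finally, using $\|v\|,\|v'\|\in(D-2\alpha,D]$,
$$\|v-v'\|\le\|v\|\,\|\hat v-\hat{v'}\|+\big|\|v\|-\|v'\|\big|<2D\varepsilon_0+2\alpha=2(2K/c)^{1/q}+\frac{2K}{D^{q-1}},$$
since substituting $\alpha=K/D^{q-1}$ gives $\varepsilon_0=(2K/c)^{1/q}/D$ and hence $2D\varepsilon_0=2(2K/c)^{1/q}$. As $D\ge1$, the right-hand side is at most $2(2K/c)^{1/q}+2K$, which is strictly less than $2$ once $K=K(X)$ is chosen small enough (depending only on $c$ and $q$, hence on $X$). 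This contradicts $\|v-v'\|\ge2$ and proves the lemma. The main obstacle is precisely the middle step: converting ``$\|v+w\|$ is within $O(\alpha)$ of its maximal value $2D$'' into the directional estimate ``$\|\hat v-\hat w\|=O((\alpha/D)^{1/q})$'', which is exactly where $q$-convexity is used and which dictates the form $D-K/D^{q-1}$ of the gain; everything else is bookkeeping, including the harmless check that the assumed inequalities are nonvacuous only when $D^{q}>2K$, so that the bounds $\|v\|,\|w\|>D-2\alpha$ are meaningful.
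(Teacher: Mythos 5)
Your proof is correct and follows essentially the same strategy as the paper's (sketched) argument: assume both $\|\varphi(a_0)-\varphi(a_2)\|$ and $\|\varphi(a_0)-\varphi(a_2')\|$ exceed $2(D-K/D^{q-1})$, apply the modulus of convexity quantitatively to the two edge vectors along each path (your $\hat v,\hat w$ correspond to the paper's $x_2-x_1$ and $x_1$), deduce that $\varphi(a_2)$ and $\varphi(a_2')$ are at distance less than $2$, and contradict the distance non-decreasing hypothesis. Your explicit normalization identity and the resulting bound $2(2K/c)^{1/q}+2K<2$ supply exactly the bookkeeping the paper leaves to the reference [Klo14].
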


First we finish the proof of $c_X(T_n)\ge c_1(\log_2n)^{\frac1q}$
using Lemma \ref{L:GrandChi}. So let $\varphi:T_n\to X$ be a map
of distortion $D$. Since $X$ is a Banach space, we may assume that
$\varphi$ is $D$-Lipschitz, distance non-decreasing map, that is
\[d_{T_n}(u,v)\le ||\varphi(u)-\varphi(v)||\le Dd_{T_n}(u,v).\] The
main idea of the proof is to construct a less-distorted embedding
of a smaller tree.

Given any vertex of $T_n$ which is not a leaf, let us name
arbitrarily one of its two children its {\it daughter}, and the
other its {\it son}. We select two grandchildren of the root in
the following way: we pick the grandchild mapped by $\varphi$
closest to the root among its daughter's children and the
grandchild mapped by $\varphi$ closest to the root among its son's
children (ties are resolved arbitrarily). Then we select
inductively, in the same way, two grandchildren for all previously
selected vertices up to generation $n-2$.
\medskip

The set of selected vertices, endowed with half the distance
induced by the tree metric is isometric to $T_{\lfloor\frac
n2\rfloor}$, and Lemma \ref{L:GrandChi} implies that the
restriction of $\varphi$ to this set has distortion at most
$f(D)=D-\frac{K}{D^{q-1}}$.

We can iterate such restrictions $\lfloor \log_2n\rfloor$ times to
get an embedding of $T_1$ whose distortion is at most
\[D-\lfloor \log_2n\rfloor\frac{K}{D^{q-1}}\]
since each iteration improves the distortion by at least
$K/D^{q-1}$. Since the distortion of any embedding is at least
$1$, we get the desired inequality.
\end{proof}

\begin{remark} Kloeckner borrowed the approach based on `controlled improvement for embeddings of smaller parts' from the Johnson-Schechtman
paper \cite{JS09} in which it is used for diamond graphs
(Kloeckner calls this approach a {\it self-improvement argument}).
Arguments of this type are well-known and widely used in the
linear theory, where they go back at least to James \cite{Jam64a};
but these two examples (Johnson-Schechtman \cite{JS09} and
Kloeckner \cite{Klo14}) seem to be the only two known results of
this type in the nonlinear theory. It would be interesting to find
further nonlinear arguments of this type.
\end{remark}

\begin{proof}[Sketch of the proof of Lemma \ref{L:GrandChi}]
Assume $\varphi(a_0)=0$ and let $x_1=\varphi(a_1)$,
$x_2=\varphi(a_2)$ and
 $x'_2=\varphi(a'_2)$. Recall that we assumed
 \[d_{T_n}(u,v)\le ||\varphi(u)-\varphi(v)||\le Dd_{T_n}(u,v). \eqno{(*)}\]

Consider the (easy) case where $\|x_2\|= 2D$ and $\|x'_2\|= 2D$
(that is, the distortion $D$ is attained on these vectors). We
claim that this implies that $x_2=x_2'$. In fact, it is easy to
check that this implies $\|x_1\|=D$, $\|x_2-x_1\|=D$, and
$\|x'_2-x_1\|=D$. Also $\left\|\frac{x_1+(x_2-x_1)}2\right\|=D$
and $\left\|\frac{x_1+(x'_2-x_1)}2\right\|=D$. By the uniform
convexity we get $||x_1-(x_2-x_1)||=0$ and $||x_1-(x'_2-x_1)||=0$.
Hence $x_2=x_2'$, and we get that the conditions $\|x_2\|= 2D$ and
$\|x'_2\|= 2D$ cannot be satisfied simultaneously.

The proof of Lemma \ref{L:GrandChi}  goes as follows. We start by
letting $\|x_2\|\ge 2(D-\eta)$ and $\|x'_2\|\ge 2(D-\eta)$ for
some $\eta>0$. Using a perturbed version of the argument just
presented, the definition of the modulus of convexity, and our
assumption $\delta_X(\ep)\ge c\ep^p$, we get an estimate of
$||x_2-x'_2||$ from above in terms of $\eta$. Comparing this
estimate with the assumption $||x_2-x_2'||\ge 2$ (which follows
from $d_{T_n}(u,v)\le ||\varphi(u)-\varphi(v)||$), we get the
desired estimate for $\eta$ from below, see \cite{Klo14} for
details.
\end{proof}

\begin{remark} The approach of Kloeckner can be used for any uniformly convex space, it is not necessary to combine it with the Pisier Theorem
(Theorem \ref{T:Pisier}), see \cite{Pis14+}.
\end{remark}

To prove the ``only if'' part of Bourgain's Theorem we need the
following characterization of superreflexivity,  one of the most
suitable sources for this characterization of superreflexivity is
\cite{Pis14+}.

\begin{theorem}[James \cite{Jam64a,Jam72a,SS70}]\label{T:NSRChar}
Let $X$ be a Banach space. The following are equivalent:
\smallskip

\begin{enumerate}

\item\label{I:NonSR} $X$ is not superreflexive

\item\label{I:Flat} There exists $\alpha\in(0,1]$ such that for
each $m\in\mathbb{N}$ the unit ball of the space $X$ contains a
finite sequence $x_1, x_2,\dots,x_m$ of vectors satisfying, for
any $j\in\{1,\dots,m-1\}$ and any real coefficients
$a_1,\dots,a_m$, the condition
\begin{equation}\label{E:CharSR}
\left\|\sum_{i=1}^ma_ix_i\right\|\ge
\alpha\,\left(\left|\sum_{i=1}^ja_i\right|+\left|\sum_{i=j+1}^ma_i\right|
\right).
\end{equation}

\item\label{I:FlatForall} For each $\alpha\in(0,1)$ and each
$m\in\mathbb{N}$ the unit ball of the space $X$ contains a finite
sequence $x_1, x_2,\dots,x_m$ of vectors satisfying, for any
$j\in\{1,\dots,m-1\}$ and any real coefficients $a_1,\dots,a_m$,
the condition \eqref{E:CharSR}.

\end{enumerate}
\end{theorem}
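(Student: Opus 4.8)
The plan is to prove the cycle of implications $(\ref{I:FlatForall})\Rightarrow(\ref{I:Flat})\Rightarrow(\ref{I:NonSR})\Rightarrow(\ref{I:FlatForall})$, which establishes the equivalence of all three conditions. The implication $(\ref{I:FlatForall})\Rightarrow(\ref{I:Flat})$ is immediate: condition (\ref{I:FlatForall}) supplies the flat sequences for \emph{every} $\alpha\in(0,1)$, so specializing to any single value, say $\alpha=\tfrac12$, yields (\ref{I:Flat}). The two substantial directions are the passage from the finite, fixed-$\alpha$ data to genuine non-superreflexivity, and the converse \emph{self-improvement} that manufactures flat sequences whose constant is arbitrarily close to $1$.

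For $(\ref{I:Flat})\Rightarrow(\ref{I:NonSR})$ I would argue via an ultrapower. Fix a free ultrafilter $\mathcal{U}$ on $\mathbb{N}$ and form $X_\mathcal{U}$, which is finitely representable in $X$; by Definition \ref{D:SuperRefl} it suffices to show that $X_\mathcal{U}$ is not reflexive. For each $m$, condition (\ref{I:Flat}) provides vectors $x^{(m)}_1,\dots,x^{(m)}_m$ in the unit ball $B_X$ satisfying \eqref{E:CharSR} with the one fixed constant $\alpha$. Setting $\xi_i$ to be the class of the sequence $(x^{(m)}_i)_{m\ge i}$ produces an infinite sequence $(\xi_i)_{i=1}^\infty$ in $B_{X_\mathcal{U}}$ for which \eqref{E:CharSR} holds for all split points $j$ and all finitely supported coefficient vectors, with the same $\alpha$. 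The inequality \eqref{E:CharSR} is precisely the separation property obstructing weak compactness: the closed convex hulls of the heads $\{\xi_i:i\le k\}$ stay uniformly far from those of the tails $\{\xi_i:i>k\}$, so James's classical reflexivity criterion fails for $X_\mathcal{U}$. Hence $X_\mathcal{U}$ is non-reflexive and $X$ is non-superreflexive.

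The implication $(\ref{I:NonSR})\Rightarrow(\ref{I:FlatForall})$ carries the real content. If $X$ is not superreflexive, some Banach space $Y$ finitely representable in $X$ is non-reflexive, and James's reflexivity theorem produces, in a form equivalent to \eqref{E:CharSR}, sequences in $B_Y$ of every finite length satisfying the flatness condition with some fixed $\theta\in(0,1)$. As each such configuration is finite-dimensional and $Y$ is finitely representable in $X$, it can be reproduced in $B_X$ with constant $\theta-\ep$, which already gives (\ref{I:Flat}). It then remains to boost the constant from a fixed $\theta$ up to any prescribed $\beta<1$. This is James's self-improvement argument: starting from a very long flat sequence with constant $\alpha$, one partitions it into consecutive blocks and replaces each block by a suitably weighted (``staircase'') sum of its members; a convexity estimate shows the resulting shorter sequence satisfies \eqref{E:CharSR} with a strictly larger constant, and iterating drives the constant above $\beta$ while the length stays as large as desired.

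The main obstacle is exactly this self-improvement step. The ultrapower direction and the transfer through finite representability are routine once the machinery is set up, and James's reflexivity criterion is invoked as a black box; but the blocking-and-averaging that converts a crude flatness constant into one arbitrarily close to $1$ hinges on the careful choice of block sizes and weights together with the attendant convexity computation, and this is the technical heart of James's theory of non-reflexive spaces. I would organize the bookkeeping so that one blocking operation, applied to a sequence of length on the order of $m^2$, yields a sequence of length $m$ with an explicitly improved constant, and then iterate to reach the target $\beta$.
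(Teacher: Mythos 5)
You should first be aware that the paper itself contains no proof of Theorem \ref{T:NSRChar}: it is imported from James \cite{Jam64a,Jam72a,SS70}, with \cite{Pis14+} named as a suitable source, and the remarks following the statement record exactly where the difficulty sits --- the implication \eqref{I:NonSR}$\Rightarrow$\eqref{I:FlatForall} is much harder for $\alpha\in[\frac12,1)$ than for $\alpha\in(0,\frac12)$ (a ``relatively easy'' proof of the hard range being due to Brunel and Sucheston \cite{BS75}), and the known ``self-improvement'' proof of \eqref{I:Flat}$\Leftrightarrow$\eqref{I:FlatForall}, due to Wenzel \cite{Wen97}, rests on Ramsey's theorem and requires very long sequences, a feature shown in \cite{Ost04} to be to some extent unavoidable. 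Measured against this, two of your three implications are sound: \eqref{I:FlatForall}$\Rightarrow$\eqref{I:Flat} is indeed trivial, and your ultrapower argument for \eqref{I:Flat}$\Rightarrow$\eqref{I:NonSR} is correct --- the inequality \eqref{E:CharSR} passes to the vectors $\xi_i\in B_{X_{\mathcal U}}$, it forces the convex hulls of $\{\xi_i: i\le j\}$ and $\{\xi_i: i>j\}$ to lie at distance at least $2\alpha$ for every $j$, James's reflexivity criterion then shows $X_{\mathcal U}$ is not reflexive, and $X_{\mathcal U}$ is finitely representable in $X$. Likewise, transferring James's criterion through finite representability to get \eqref{I:Flat} for \emph{some} constant is fine.

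The genuine gap is the constant-boosting step inside \eqref{I:NonSR}$\Rightarrow$\eqref{I:FlatForall}, which you correctly call the heart of the matter but then settle with a mechanism that cannot work: no ``blocking plus convexity estimate'' can raise the constant. Concretely, let $X_0$ be $c_{00}$ with the norm $\|(a_i)\|:=\alpha\sup_{j\ge1}\bigl(\bigl|\sum_{i\le j}a_i\bigr|+\bigl|\sum_{i>j}a_i\bigr|\bigr)$ (one checks this is a norm) and put $x_i=e_i$. Then $\|x_i\|=\alpha\le1$ and, for every $n$, the sequence $x_1,\dots,x_n$ satisfies \eqref{E:CharSR} with constant $\alpha$, so these data fulfill every hypothesis available to your argument. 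Now take any $y_1,\dots,y_m$ of the form $y_k=\sum_iw_{k,i}x_i$ whose membership in the unit ball is certified in the only way your hypotheses permit, namely $\sum_i|w_{k,i}|\le1$ (this covers block averages, staircase sums, anything of the proposed kind). Testing \eqref{E:CharSR} for $(y_k)$ with $b_1=\dots=b_m=1$ and any $j$ would require $\bigl\|\sum_k y_k\bigr\|\ge\beta m$; but $\sum_ky_k=\sum_ia_ie_i$ with $\sum_i|a_i|\le m$, so $\bigl\|\sum_ky_k\bigr\|\le\alpha m$ in $X_0$. Hence every such recombination still has flatness constant at most $\alpha$: blocking preserves the constant and never improves it. (The theorem is of course true in $X_0$ --- the vectors $e_i/\alpha$ lie in the unit ball and satisfy \eqref{E:CharSR} with constant $1$ --- but they are not obtained by recombining $(x_i)$; the improvement must construct \emph{new} vectors using the geometry of the particular space, which is exactly what James's original argument, Brunel--Sucheston's equal-signs-additive sequences, and Wenzel's Ramsey-theoretic argument do, and why this step is hard.) Your quantitative bookkeeping is independently untenable: a bounded number of improvement steps, each turning length about $m^2$ into length $m$, would extract constant-$\beta$ flat sequences of length $m$ from flat sequences of length polynomial in $m$, which is at odds with the lower bounds of \cite{Ost04} showing that very lengthy sequences cannot, in general, be avoided. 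So \eqref{I:NonSR}$\Rightarrow$\eqref{I:FlatForall}, in particular for the whole range $\alpha\in[\frac12,1)$, remains unproved in your proposal.
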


\begin{remark} It is worth mentioning that the proof of \eqref{I:NonSR}$\Rightarrow$\eqref{I:FlatForall} in the case
where $\alpha\in[\frac12,1)$ is much more difficult than in the
case $\alpha\in(0,\frac12)$. A relatively easy proof in the case
$\alpha\in[\frac12,1)$ was found by Brunel and Sucheston
\cite{BS75}, see also its presentation in \cite{Pis14+}.
\end{remark}

\begin{remark}
To prove the Bourgain theorem it suffices to use
\eqref{I:NonSR}$\Rightarrow$\eqref{I:FlatForall} in the `easy'
case $\alpha\in(0,\frac12)$. The case $\alpha\in[\frac12,1)$ is
needed only for ``almost-isometric'' embeddings of trees into
nonsuperreflexive spaces.
\end{remark}

\begin{remark}
The equivalence of
\eqref{I:Flat}$\Leftrightarrow$\eqref{I:FlatForall} in Theorem
\ref{T:NSRChar} can be proved using a ``self-improvement
argument'', but the proof of James is different. A proof of
\eqref{I:Flat}$\Leftrightarrow$\eqref{I:FlatForall} using a
``self-improvement argument'' was obtained by Wenzel \cite{Wen97},
it is based on the Ramsey theorem, so it requires a very lengthy
sequence to get a better $\alpha$. In \cite{Ost04} it was proved
that to some extent the usage of `very lengthy' sequences is
necessary.
\end{remark}

\begin{proof}[Proof of the ``only if'' part]
There is a natural partial order on $T_n$: we say that $s < t$
$(s,t\in T_n)$ if the sequence corresponding to $s$ is the initial
segment of the sequence corresponding to $t$.

{

\begin{figure}
{
\begin{tikzpicture}[level/.style={sibling distance=70mm/#1}]
\node [circle,draw] (z) {0}
  child {node [circle,draw] (a) {$-\frac12$}
    child {node [circle,draw] (b) {$-\frac34$}
      child {node [circle,draw]  {$-\frac78$}}
      child {node [circle,draw]  {$-\frac{5}{8}$}}
    }
    child {node [circle,draw] (g) {$-\frac14$}
      child {node [circle,draw]  {$-\frac38$}}
      child {node [circle,draw]  {$-\frac18$}}
    }
  }
  child {node [circle,draw] (a) {$\frac12$}
    child {node [circle,draw] (b) {$\frac14$}
      child {node [circle,draw]  {$\frac18$}}
      child {node [circle,draw]  {$\frac38$}}
    }
    child {node [circle,draw] (g) {$\frac34$}
      child {node [circle,draw]  {$\frac58$}}
      child {node [circle,draw]  {$\frac78$}}
      }
    };
\end{tikzpicture}
} \caption{The map of $T_3$ into $[-1,1]$.} \label{F:TreeMap}
\end{figure}

}

An important observation of Bourgain is that there is a bijective
mapping
\[\varphi: T_n\to[1,\dots,2^{n+1}-1]\] such that $\varphi$ maps
two disjoint intervals of the ordering of $T_n$, starting at the
same vertex and going `down' into disjoint intervals of
$[1,\dots,2^{n+1}-1]$. The existence of $\varphi$ can be seen from
a suitably drawn picture of $T_n$ (see Figure \ref{F:TreeMap}), or
using the expansion of numbers in base $2$. To use the expansion
of numbers, we observe that the map
$\{\theta_i\}_{i=1}^n\to\left\{{2\theta_i-1}\right\}_{i=1}^n$ maps
a $0,1-$sequence onto the corresponding $\pm1-$sequence. Now we
introduce a map $\psi:T_n\to [-1,1]$ by letting
$\psi(\emptyset)=0$ and
\[\psi(\theta_1,\dots,\theta_n)=\sum_{i=1}^n2^{-i}\left({2\theta_i-1}\right).\]
To construct $\varphi$ we relabel the range of $\psi$ in the
increasing order using numbers $[1,\dots,2^{n+1}-1]$.

Let $\{x_1, x_2,\dots,x_{2^{n+1}-1}\}$ be a sequence in a
nonsuperreflexive Banach space $X$ whose existence is guaranteed
by Theorem \ref{T:NSRChar}
(\eqref{I:NonSR}$\Rightarrow$\eqref{I:FlatForall}). We introduce
an embedding $F_n : T_n \to X$ by
\[F_n(t) =\sum_{s\le t} x_{\varphi(s)},\]
where $s\le t$ for vertices of a binary tree means that $s$ is the
initial segment of the sequence $t$. Then
$F_n(t_1)-F_n(t_2)=\sum_{t_0< s\le t_1} x_{\varphi(s)}-\sum_{t_0<
s\le t_2} x_{\varphi(s)}$, where $t_0$ is the vertex of $T_n$
corresponding to the largest initial common segment of $t_1$ and
$t_2$, see Figure \ref{F:ComAns}. The condition in
\eqref{E:CharSR} and the choice of $\varphi$ imply that
\[||F_n(t_1)-F_n(t_2)||\ge\alpha(d_T(t_1,t_0)+d_T
(t_2,t_0))=\alpha d_{T_n}(t_1,t_2).\] The estimate
$||F_n(t_1)-F_n(t_2)||$ from above is straightforward. This
completes the proof of bilipschitz embeddability of $\{T_n\}$ into
any nonsuperreflexive Banach space with uniformly bounded
distortions.\end{proof}

{

\begin{figure}
{
\begin{tikzpicture}[level/.style={sibling distance=70mm/#1}]
\node [circle,draw] (z) {\hbox{~~}}
  child {node [circle,draw] (a) {$t_0$}
    child {node [circle,draw] (b) {\hbox{~~}}
      child {node [circle,draw]  {\hbox{~~}}}
      child {node [circle,draw]  {$t_1$}}
    }
    child {node [circle,draw] (g) {$t_2$}
      child {node [circle,draw]  {\hbox{~~}}}
      child {node [circle,draw]  {\hbox{~~}}}
    }
  }
  child {node [circle,draw] (a) {\hbox{~~}}
    child {node [circle,draw] (b) {\hbox{~~}}
      child {node [circle,draw]  {\hbox{~~}}}
      child {node [circle,draw]  {\hbox{~~}}}
    }
    child {node [circle,draw] (g) {\hbox{~~}}
      child {node [circle,draw]  {\hbox{~~}}}
      child {node [circle,draw]  {\hbox{~~}}}
      }
    };
\end{tikzpicture}
} \caption{$t_0$ is the closest common ancestor of $t_1$ and
$t_2$.} \label{F:ComAns}
\end{figure}
}

\subsection{Characterization of superreflexivity in terms of
diamond graphs}

Johnson and Schechtman \cite{JS09} proved that there are some
other sequences of graphs (with their graph metrics) which also
can serve as test-spaces for superreflexivity. For example, binary
trees in Bourgain's Theorem can be replaced by the diamond graphs
or by Laakso graphs.

\begin{definition}\label{D:Diamonds}
Diamond graphs $\{D_n\}_{n=0}^\infty$ are defined as follows: The
{\it diamond graph} of level $0$ is denoted $D_0$. It has two
vertices joined by an edge of length $1$. The {\it diamond graph}
$D_n$ is obtained from $D_{n-1}$ as follows. Given an edge $uv\in
E(D_{n-1})$, it is replaced by a quadrilateral $u, a, v, b$, with
edges $ua$, $av$, $vb$, $bu$. (See Figure \ref{F:Diamond2}.)
\medskip

Two different normalizations of the graphs $\{D_n\}_{n=1}^\infty$
are considered:

\begin{itemize}

\item {\it Unweighted diamonds:} Each edge has length $1$.

\item {\it Weighted diamonds:} Each edge of $D_n$ has length
$2^{-n}$
\end{itemize}

In both cases we endow vertex sets of $\{D_n\}_{n=0}^\infty$ with
their shortest path metrics.

In the case of weighted diamonds the identity map $D_{n-1}\mapsto
D_n$ is an isometry. In this case the union of $D_n$, endowed with
its the metric induced from $\{D_n\}_{n=0}^\infty$ is called the
{\it infinite diamond} and is denoted $D_\omega$.
\end{definition}

To the best of my knowledge the first paper in which diamond
graphs $\{D_n\}_{n=0}^\infty$ were used in Metric Geometry is
\cite{GNRS04} (a conference version was published in 1999).

{

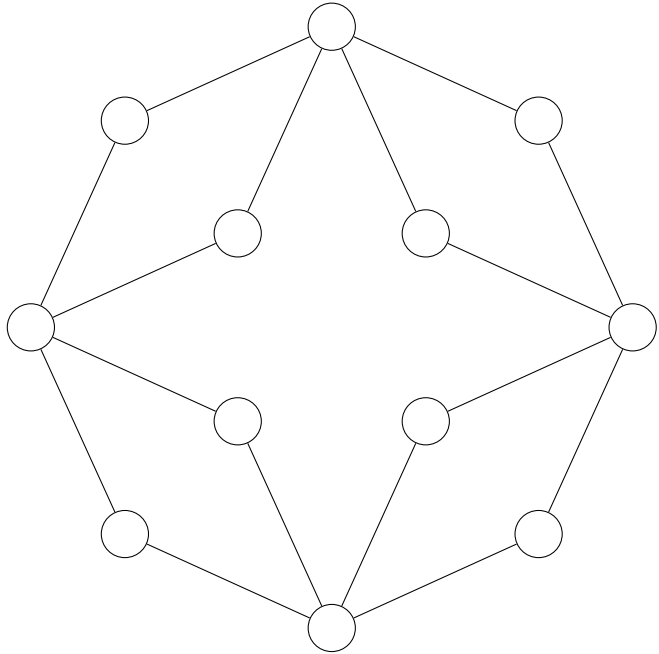
\begin{figure}
\begin{center}
{
\begin{tikzpicture}
  [scale=.25,auto=left,every node/.style={circle,draw}]
  \node (n1) at (16,0) {\hbox{~~~}};
  \node (n2) at (5,5)  {\hbox{~~~}};
  \node (n3) at (11,11)  {\hbox{~~~}};
  \node (n4) at (0,16) {\hbox{~~~}};
  \node (n5) at (5,27)  {\hbox{~~~}};
  \node (n6) at (11,21)  {\hbox{~~~}};
  \node (n7) at (16,32) {\hbox{~~~}};
  \node (n8) at (21,21)  {\hbox{~~~}};
  \node (n9) at (27,27)  {\hbox{~~~}};
  \node (n10) at (32,16) {\hbox{~~~}};
  \node (n11) at (21,11)  {\hbox{~~~}};
  \node (n12) at (27,5)  {\hbox{~~~}};

  \foreach \from/\to in {n1/n2,n1/n3,n2/n4,n3/n4,n4/n5,n4/n6,n6/n7,n5/n7,n7/n8,n7/n9,n8/n10,n9/n10,n10/n11,n10/n12,n11/n1,n12/n1}
    \draw (\from) -- (\to);

\end{tikzpicture}
}
\caption{Diamond $D_2$.}\label{F:Diamond2}
\end{center}
\end{figure}
}

\begin{definition}\label{D:Laakso}
Laakso graphs $\{L_n\}_{n=0}^\infty$ are defined as follows: The
{\it Laakso graph} of level $0$ is denoted $L_0$. It has two
vertices joined by an edge of length $1$. The {\it Laakso graph}
$L_n$ is obtained from $L_{n-1}$ as follows. Given an edge $uv\in
E(L_{n-1})$, it is replaced by the graph $L_1$ shown in Figure
\ref{F:Laakso}, the vertices $u$ and $v$ are identified with the
vertices of degree $1$ of $L_1$.
\medskip

Two different normalizations of the graphs $\{L_n\}_{n=1}^\infty$
are considered:

\begin{itemize}

\item {\it Unweighted Laakso graphs:} Each edge has length $1$.

\item {\it Weighted Laakso graphs:} Each edge of $L_n$ has length
$4^{-n}$
\end{itemize}

In both cases we endow vertex sets of $\{L_n\}_{n=0}^\infty$ with
their shortest path metrics.

In the case of weighted Laakso graphs the identity map
$L_{n-1}\mapsto L_n$ is an isometry. In this case the union of
$L_n$, endowed with its the metric induced from
$\{L_n\}_{n=0}^\infty$ is called the {\it Laakso space} and is
denoted $L_\omega$.
\end{definition}

The Laakso graphs were introduced in \cite{LP01}, but they were
inspired by the construction of Laakso in \cite{Laa00}.

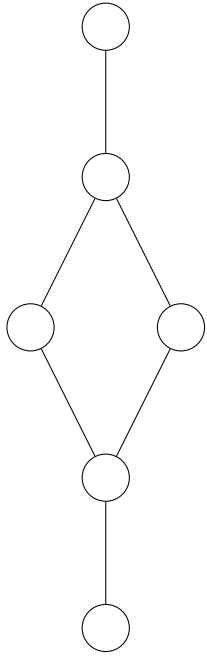
\begin{figure}
\begin{center}
{
\begin{tikzpicture}
  [scale=.25,auto=left,every node/.style={circle,draw}]
  \node (n1) at (16,0) {\hbox{~~~}};
  \node (n2) at (16,8)  {\hbox{~~~}};
  \node (n3) at (12,16) {\hbox{~~~}};
  \node (n4) at (20,16)  {\hbox{~~~}};
  \node (n5) at (16,24)  {\hbox{~~~}};
  \node (n6) at (16,32) {\hbox{~~~}};

\foreach \from/\to in {n1/n2,n2/n3,n2/n4,n4/n5,n3/n5,n5/n6}
    \draw (\from) -- (\to);

\end{tikzpicture}
} \caption{Laakso graph $L_1$.}\label{F:Laakso}
\end{center}
\end{figure}

\begin{theorem}[Johnson-Schechtman \cite{JS09}]\label{T:JS}
A Banach space $X$ is nonsuper\-re\-fle\-xive if and only if it
admits bilipschitz embeddings with uniformly bounded distortions
of diamonds $\{D_n\}_{n=1}^\infty$ of all sizes.
\end{theorem}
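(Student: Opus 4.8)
The plan is to establish the two directions by transplanting, respectively, the construction behind the ``only if'' part of Bourgain's Theorem~\ref{T:Bourgain} and the self-improvement argument behind its ``if'' part, adapting both to the series-parallel structure of the diamonds of Definition~\ref{D:Diamonds}.

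For the ``only if'' direction (nonsuperreflexive $\Rightarrow$ diamonds embed) I would argue as follows. Fix $\alpha\in(0,1)$ and, for a given level $n$, apply Theorem~\ref{T:NSRChar} ((\ref{I:NonSR})$\Rightarrow$(\ref{I:FlatForall})) to obtain a flat sequence $x_1,\dots,x_{2^n}$ in the unit ball of $X$ satisfying \eqref{E:CharSR}. Working with the weighted diamond $D_n$, whose top-to-bottom distance is $1$ and which is built by repeatedly replacing each edge by a quadrilateral, I would assign to every edge a contiguous block of indices: start with the full block $\{1,\dots,2^n\}$ on the single edge of $D_0$ and, at each refinement, split the block $I$ of an edge $uv$ into halves $I_L,I_R$, placing the half-sums $\sum_{i\in I_L}x_i$ then $\sum_{i\in I_R}x_i$ as displacements along one branch and the same two half-sums in the opposite order along the other. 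The two midpoints $a,b$ of the new quadrilateral thus become distinct one-sided partial sums. A direct check shows that the displacements around each quadrilateral $u\to a\to v\to b\to u$ sum to zero, so the resulting vertex map $F_n$ is well defined despite the many paths of a diamond. The upper Lipschitz bound is the triangle inequality; for the lower bound one observes that $F_n(s)-F_n(t)$ is a signed sum $\sum a_i x_i$ supported on the range of levels separating $s$ and $t$, and choosing the cut $j$ in \eqref{E:CharSR} at their meeting level yields $\|F_n(s)-F_n(t)\|\ge\alpha\, d_{D_n}(s,t)$. This gives distortion at most $1/\alpha$, uniformly in $n$ (and, using the hard case $\alpha\to1$ of Theorem~\ref{T:NSRChar}, even almost-isometric embeddings).

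For the ``if'' direction I would pass, via Pisier's Theorem~\ref{T:Pisier}, to an equivalent $q$-convex norm and prove $c_X(D_n)\ge c\,n^{1/q}\to\infty$, which is incompatible with uniformly bounded distortions. The engine is a quadrilateral analogue of the fork Lemma~\ref{L:GrandChi}: if $\varphi$ is $D$-Lipschitz and distance non-decreasing on the four vertices of a single $D_1$ (top $u$, bottom $v$, midpoints $a,b$), then since $a$ and $b$ must both sit near the metric midpoint of $\varphi(u),\varphi(v)$ while satisfying $\|\varphi(a)-\varphi(b)\|\ge d(a,b)$, the estimate $\delta_X(\ep)\ge c\ep^q$ forces at least one midpoint to be mapped within distance $D-K/D^{q-1}$ (suitably normalized) of $u$ or $v$. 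Selecting in each last-generation quadrilateral of $D_n$ the midpoint on which this gain occurs produces an isometric copy of weighted $D_{n-1}$ on which $\varphi$ restricts to distortion at most $f(D)=D-K/D^{q-1}$; iterating this reduction $n$ times down to $D_0$, and using that distortion is always at least $1$, yields $D^q\gtrsim n$. (For $q=2$ this recovers the expected $c_2(D_n)\gtrsim\sqrt n$.)

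The step I expect to be the main obstacle is the self-improvement reduction in the ``if'' direction, exactly as it is the crux in the tree case. The quadrilateral lemma controls only the single ``long'' distance inside each chosen quadrilateral, whereas the distortion of the restricted map is the supremum over \emph{all} pairs in $D_{n-1}$; the real work is to show that these per-quadrilateral gains aggregate into a genuine improvement of the global distortion, and that the selected midpoints together with the old vertices assemble, under the induced $D_n$-metric, into an isometric copy of $D_{n-1}$ with the doubled edge length of the weighted normalization. A secondary technical point, on the ``only if'' side, is verifying the lower Lipschitz estimate uniformly over all vertex pairs --- not merely those lying in a common quadrilateral --- which is precisely where the ``single cut'' form of the James inequality \eqref{E:CharSR} is essential.
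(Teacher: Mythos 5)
The paper itself states Theorem \ref{T:JS} \emph{without} proof (it defers to Lancien, Pisier and \cite{Ost13a}), so your proposal has to be judged on its own merits. Both of your high-level strategies are indeed the standard ones, but in each direction the key step, as you state it, is false.

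\textbf{``Only if'' direction.} The block construction itself is sound, but the lower-bound claim --- that a single cut $j$ in \eqref{E:CharSR} ``at the meeting level'' gives $\|F_n(s)-F_n(t)\|\ge\alpha\,d_{D_n}(s,t)$ --- fails, because in a diamond the positive and negative supports of $F_n(s)-F_n(t)$ need not be separable by any single cut. Concretely, in $D_2$ (so $m=4$) your map sends the two midpoints of the quadrilateral on $ua$ to $x_1$ and $x_2$, and the two midpoints of the quadrilateral on $bv$ to $x_3+x_4+x_1$ and $x_3+x_4+x_2$. Take $s$ with $F_2(s)=x_2$ and $t$ with $F_2(t)=x_1+x_3+x_4$. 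Then $d_{D_2}(s,t)=4$, while $F_2(s)-F_2(t)=x_2-x_1-x_3-x_4$ has coefficient vector $(-1,+1,-1,-1)$, and every cut $j\in\{1,2,3\}$ gives $|\sum_{i\le j}a_i|+|\sum_{i>j}a_i|=2$; James' condition therefore yields only $2\alpha$, not $4\alpha$. This is not a lost constant in an otherwise valid estimate: in the nonsuperreflexive space whose norm is exactly $\|\sum_ia_ix_i\|=\sup_j\bigl(|\sum_{i\le j}a_i|+|\sum_{i>j}a_i|\bigr)$ (its unit vector basis satisfies \eqref{E:CharSR} with $\alpha=1$), that vector has norm exactly $2$, so your embedding has distortion at least $2$ there, which also refutes the parenthetical claim of almost-isometric embeddings. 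The construction can be saved, but the salvage is precisely the missing work: decomposing $D_n$ into its four copies of $D_{n-1}$, one checks that pairs lying in two copies that share a gate vertex lose nothing, pairs lying in opposite copies lose at most a factor $2$ (their distance is at most $2^n$, while the cut between the two halves of the index set already yields $2^{n-1}$), and pairs inside one copy reduce by induction. This gives $d_{D_n}(s,t)\le 2\sup_j\bigl(|\sum_{i\le j}a_i|+|\sum_{i>j}a_i|\bigr)$, hence distortion $\le 2/\alpha$, uniformly in $n$ --- bounded, but not $1/\alpha$.

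\textbf{``If'' direction.} Your quadrilateral lemma is false as stated. Take $X=\ell_2$ (so $q=2$) and map the unit-edge quadrilateral by $\varphi(u)=(0,0)$, $\varphi(v)=(2\sqrt{D^2-1},0)$, $\varphi(a)=(\sqrt{D^2-1},1)$, $\varphi(b)=(\sqrt{D^2-1},-1)$: this is distance non-decreasing and $D$-Lipschitz, yet all four edges have length exactly $D$ and both midpoints are at distance exactly $D$ from each of $u$ and $v$, so no midpoint is mapped ``within $D-K/D^{q-1}$ of $u$ or $v$''. What $q$-convexity actually forces is compression of the \emph{diagonal} $uv$. Set $m=\frac12(\varphi(u)+\varphi(v))$ and $\eta=D-\frac12\|\varphi(u)-\varphi(v)\|$; applying $\delta_X(\varepsilon)\ge c\varepsilon^q$ to the vectors $\varphi(a)-\varphi(u)$ and $\varphi(v)-\varphi(a)$ gives $\|\varphi(a)-m\|\le\frac12\bigl(\eta D^{q-1}/c\bigr)^{1/q}$, and likewise for $b$; since $\|\varphi(a)-\varphi(b)\|\ge2$, this forces $\eta\ge c2^q/D^{q-1}$, i.e.
\[
\|\varphi(u)-\varphi(v)\|\le 2\left(D-\frac{K}{D^{q-1}}\right),\qquad K=c2^q.
\]
Once the lemma concerns the diagonal, your ``main obstacle'' disappears and no selection of midpoints is needed at all: the old vertices form an isometric copy of $D_{n-1}$ with edge length $2$ inside $D_n$, every edge of $D_{n-1}$ is the diagonal of a last-generation quadrilateral, and the Lipschitz constant of a map on a graph equals its maximal stretch over edges; hence $\frac12\varphi|_{D_{n-1}}$ is distance non-decreasing with Lipschitz constant at most $D-K/D^{q-1}$. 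Iterating $n$ times and using that distortion is at least $1$ gives $c_X(D_n)\ge(Kn)^{1/q}$. Your selection scheme, by contrast, cannot be repaired even granting the lemma: the old vertices together with one midpoint per quadrilateral form a subdivision of $D_{n-1}$, not a copy of $D_{n-1}$, and nothing controls the distances from selected midpoints to anything outside their own quadrilateral. This is the essential structural difference from the tree case you modeled the argument on: for diamonds the self-improvement applies to \emph{every} edge of $D_{n-1}$ simultaneously, which is exactly why no analogue of the selection step in the proof of Lemma \ref{L:GrandChi} is required.
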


\begin{theorem}[Johnson-Schechtman \cite{JS09}]\label{T:JSLaakso}
A similar result holds for  $\{L_n\}_{n=1}^\infty$.
\end{theorem}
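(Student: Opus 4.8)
The plan is to establish both implications by transcribing, with the Laakso gadget in place of the quadrilateral, the two arguments already developed for binary trees and for diamonds (Theorem~\ref{T:JS}). The engine in both directions is the self-similar recursion: $L_n$ is produced from $L_{n-1}$ by substituting a scaled copy of $L_1$ for every edge, and under the weighted normalization the inclusion $V(L_{n-1})\hookrightarrow V(L_n)$ is isometric (Definition~\ref{D:Laakso}). Throughout I write one copy of $L_1$ as $u$--$p$--$\{r,r'\}$--$p'$--$v$, where $u,v$ are the identified degree-one endpoints, $p,p'$ are the two branch vertices, and $r,r'$ are the two parallel interior vertices; the two vertices $r,r'$ are both metric midpoints of the pair $(p,p')$, and this single feature drives everything.

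For the ``only if'' direction (nonsuperreflexive implies uniformly bounded embeddings), I would use James' characterization in the form \eqref{I:NonSR}$\Rightarrow$\eqref{I:FlatForall} of Theorem~\ref{T:NSRChar}, taking a flat sequence $x_1,\dots,x_m$ in the unit ball obeying \eqref{E:CharSR} with $m$ equal to the length of a $u$--$v$ geodesic of $L_n$. As in Bourgain's tree embedding I attach these vectors to the edges of $L_n$ and send each vertex to the sum of the vectors read off along a geodesic from the source $u$. The only new point relative to trees is that $L_n$ has merges, so the assignment must be made consistent: the two branches $p$--$r$--$p'$ and $p$--$r'$--$p'$ must produce the same partial sum at $p'$. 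This is arranged by distributing a block of the flat sequence across the gadget so that the two branches traverse complementary halves of the block; the ``split at $j$'' shape of \eqref{E:CharSR}, valid for all coefficients and all split points, then simultaneously forces both branches to be stretched by at least $\alpha$ and keeps $\|\varphi(r)-\varphi(r')\|$ large. Reading \eqref{E:CharSR} at the last common vertex of two geodesics gives the lower bound $\|F_n(w)-F_n(w')\|\ge \alpha\, d_{L_n}(w,w')$ exactly as in the tree case, while $\|x_i\|\le 1$ gives the Lipschitz upper bound; the easy case $\alpha\in(0,\tfrac12)$ already yields distortion $\le 1/\alpha$ uniformly in $n$.

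For the ``if'' direction I argue by contraposition: assuming $X$ superreflexive, Theorem~\ref{T:Pisier} lets me renorm so that $\delta_X(\ep)\ge c\,\ep^q$, and I would prove a self-improvement lemma parallel to Lemma~\ref{L:GrandChi}. Its statement: there is $K=K(X)>0$ so that any distance non-decreasing, $D$-Lipschitz $\varphi\colon L_n\to X$ restricts, on the isometric copy of $V(L_{n-1})$, to a distance non-decreasing, $f(D)$-Lipschitz map with $f(D)=D-\tfrac12 K D^{-(q-1)}$. The mechanism is midpoint uniqueness in a uniformly convex space: if the traversal $\|\varphi(p)-\varphi(p')\|$ of a top-level gadget were near its maximal value, then both $\varphi(r)$ and $\varphi(r')$ would be pinned near the segment midpoint of $\varphi(p),\varphi(p')$, forcing $\|\varphi(r)-\varphi(r')\|$ small and contradicting $\|\varphi(r)-\varphi(r')\|\ge d_{L_n}(r,r')$; quantifying this through $\delta_X$ shows each gadget is traversed with the improved constant $f(D)$. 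Summing the improved per-gadget bound along a geodesic of $L_{n-1}$ propagates $f(D)$ to all pairs, while the lower bound is inherited from the isometric inclusion. Iterating $n$ times collapses $L_n$ to $L_0$, whose distortion is $1$, so $1\le D-nKD^{-(q-1)}$ and hence $c_X(L_n)\ge c_1 n^{1/q}\to\infty$, contradicting uniform boundedness.

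The hard part will be the self-improvement lemma of the third paragraph. Unlike the fork of a tree, where a single branching is analyzed, the Laakso improvement must be extracted at a branch-and-merge, so I must verify three things at once: that contracting every top-level gadget onto its $(u,v)$ pair really returns an isometric $L_{n-1}$; that the uniform-convexity gain at the midpoints $r,r'$ survives the triangle-inequality bookkeeping along the gadget and is not diluted by the inner structure; and, crucially, that the local per-gadget gain aggregates into a genuine improvement of the \emph{global} distortion on $L_{n-1}$ rather than being washed out across the many gadgets. Making this aggregation quantitative, with a constant $K$ independent of $n$, is the technical heart of the proof, exactly as it is for diamonds in \cite{JS09}.
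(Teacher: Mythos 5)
First, a point of reference: the paper itself gives no proof of Theorem~\ref{T:JSLaakso} (nor of Theorem~\ref{T:JS}); both are stated ``without proof'' with pointers to \cite{JS09}, \cite{Lan13}, \cite{Pis14+}, \cite{Ost13a}. So your proposal can only be measured against the strategy the paper attributes to Johnson--Schechtman, and that is exactly what you propose: a ``controlled improvement for embeddings of smaller parts'' argument for the ``if'' direction, and a James-type embedding built from \eqref{E:CharSR} for the ``only if'' direction. Your ``if'' direction is correct, and is in fact cleaner than the fork argument (Lemma~\ref{L:GrandChi}) you model it on: since $r$ and $r'$ are both exact metric midpoints of $(p,p')$, near-maximality of $\|\varphi(p)-\varphi(p')\|$ pins both $\varphi(r)$ and $\varphi(r')$ near the midpoint of the segment $[\varphi(p),\varphi(p')]$, contradicting $\|\varphi(r)-\varphi(r')\|\ge d(r,r')$; no either/or alternative is needed. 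The aggregation you call the technical heart does work and is elementary: in each gadget $a$--$P$--$\{R,R'\}$--$P'$--$b$ of $L_n$ replacing an edge $ab$ of $L_{n-1}$, the convexity gain occurs on the middle pair $(P,P')$, so $\|\varphi(a)-\varphi(b)\|\le 2D+2\bigl(D-KD^{-(q-1)}\bigr)=4\bigl(D-\tfrac12 KD^{-(q-1)}\bigr)$, and summing along geodesics of $L_{n-1}$ (whose vertices lie in $V(L_{n-1})$) propagates the improved constant to all pairs; iterating $n$ times gives $c_X(L_n)\ge c_1 n^{1/q}$.

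The genuine soft spot is in the ``only if'' direction, in the sentence claiming the lower bound follows ``exactly as in the tree case'' by reading \eqref{E:CharSR} at the last common vertex $t_0$, i.e. $\|F_n(w)-F_n(w')\|\ge\alpha\bigl(d(w,t_0)+d(t_0,w')\bigr)$. That is false here, precisely because Laakso geodesics re-merge. In your assignment the two parallel copies of $L_{n-1}$ inside a gadget necessarily carry the \emph{same} half-blocks (say the branch through $r$ traverses $B_1$ then $B_2$, and the branch through $r'$ traverses $B_2$ then $B_1$; sharing is forced, since disjoint blocks could never produce equal sums at $p'$ under \eqref{E:CharSR}). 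Take $w$ in the copy $r\to p'$ and $w'$ in the copy $p\to r'$ at corresponding positions, so that their index sets inside $B_2$ coincide and cancel: then $F_n(w)-F_n(w')=\sum_{i\in B_1}x_i$, of norm at most $|B_1|=4^{n-1}$, while $d(w,p)+d(p,w')$ is close to $3\cdot 4^{n-1}$; for $\alpha>\tfrac13$ the claimed bound would contradict the Lipschitz upper bound, and in any case it cannot be extracted from \eqref{E:CharSR} because of the cancellation. The repair is short but must be made: for such ``diagonal'' pairs place the split point $j$ of \eqref{E:CharSR} at the end of $B_1$, giving $\|F_n(w)-F_n(w')\|\ge\alpha\cdot 4^{n-1}$, and observe that two vertices lying in parallel copies of one gadget satisfy $d_{L_n}(w,w')\le 2\cdot 4^{n-1}$, since they can also be joined through the merge point $p'$. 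This yields $\|F_n(w)-F_n(w')\|\ge\tfrac{\alpha}{2}\,d_{L_n}(w,w')$; combined with the straightforward cases (where the relevant index sets do lie in disjoint intervals and the tree computation is valid), the embedding has distortion at most $2/\alpha$ uniformly in $n$, which is all the theorem needs. So your architecture is the right one, and is that of \cite{JS09}; only this one step must be replaced by the split-point case analysis, at the cost of a factor $2$ in the distortion.
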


\begin{proof}[Without proof] These results, whose original proofs (especially for diamond graphs) are elegant in both directions, are loved by expositors.
Proof of Theorem \ref{T:JS} is presented in the lecture notes of
Lancien \cite{Lan13}, in the book of Pisier \cite{Pis14+}, and in
my book \cite{Ost13a}.
\end{proof}

\begin{remark} In the ``if'' direction of Theorem \ref{T:JS}, in addition to the original (controlled improvement for embeddings of smaller parts)
argument of Johnson-Schechtman \cite{JS09}, there are two other
arguments:

(1) The argument based on Markov convexity (see Definition
\ref{D:MarkConv}). It is obtained by combining results of
Lee-Naor-Peres \cite{LNP09} (each superreflexive Banach space is
Markov $p$-convex for some $p\in [2,\infty)$) and Mendel-Naor
\cite{MN13} (Markov convexity constants of diamond graphs are not
uniformly bounded from below, actually in \cite{MN13} this
statement is proved for Laakso graphs, but similar argument works
for diamond graphs).

(2) The argument of \cite[Section 3.1]{Ost11} showing that
bilipschitz embeddability of diamond graphs with uniformly bounded
distortions implies the finite tree property of the space, defined
as follows:
\end{remark}

\begin{definition}[James \cite{Jam72a}]\label{D:DelTrees} Let $\delta>0$. A {\it $\delta$-tree} in a Banach space $X$ is a
subset $\{x_\tau\}_{\tau\in T_\infty}$ of $X$ labelled by elements
of the infinite binary tree $T_\infty$, such that for each
$\tau\in T_\infty$ we have
\begin{equation}\label{E:TreeConditions}
x_\tau=\frac12(x_{\sigma_1}+x_{\sigma_2})\quad\hbox{ and
}||x_\tau-x_{\sigma_1}||=||x_{\tau}-x_{\sigma_2}||\ge\delta,\end{equation}
where $\sigma_1$ and $\sigma_2$ are the children of $\tau$. A
Banach space $X$ is said to have the {\it infinite tree property}
if it contains a bounded $\delta$-tree.

A {\it $\delta$-tree of depth $n$} in a Banach space $X$ is a
finite subset $\{x_\tau\}_{\tau\in T_n}$ of $X$ labelled by the
binary tree $T_n$ of depth $n$, such that the condition
\eqref{E:TreeConditions} is satisfied for each $\tau\in T_n$,
which is not a leaf. A Banach space $X$ has the {\it finite tree
property} if for some $\delta>0$ and each $n\in\mathbb{N}$ the
unit ball of $X$ contains a $\delta$-tree of depth $n$.
\end{definition}

\begin{remark}
In the ``only if'' direction of Theorem \ref{T:JS} there is a
different (and more complicated) proof in \cite{Ost14a,Ost14+},
which consists in a combination of the following two results:

\begin{itemize}

\item[(i)] Existence of a bilipschitz embedding of the infinite
diamond $D_\omega$ into any nonseparable dual of a separable
Banach space (using Stegall's \cite{Ste75} construction), see
\cite{Ost14a}.

\item[(ii)] Finite subsets of  a metric space which admits a
bilipschitz embedding into any nonseparable dual of a separable
Banach space, admit embeddings into any nonsuperreflexive Banach
space with uniformly bounded distortions, see \cite{Ost14+}.  (The
proof uses transfinite duals \cite{DJL76,DL76,Bel82} and the
results of Brunel-Sucheston \cite{BS75,BS76} and Perrott
\cite{Per79} on equal-signs-additive sequences.)
\end{itemize}
\end{remark}

I would like to turn your attention to the fact that the
Johnson-Schechtman Theorem \ref{T:JS} shows some obstacles on the
way to a solution the (mentioned above) problem for
superreflexivity:
\smallskip

{\it Characterize metric spaces which admit bilipschitz embeddings
into some superreflexive Banach spaces.}
\medskip

We need the following definitions and results. Let
$\{M_n\}_{n=1}^\infty$ and $\{R_n\}_{n=1}^\infty$ be two sequences
of metric spaces. We say that $\{M_n\}_{n=1}^\infty$ admits {\it
uniformly bilipschitz embeddings} into $\{R_n\}_{n=1}^\infty$ if
for each $n\in\mathbb{N}$ there is $m(n)\in\mathbb{N}$ and a
bilipschitz map $f_n:M_n\to R_{m(n)}$ such that the distortions of
$\{f_n\}_{n=1}^\infty$ are uniformly bounded.

\begin{theorem}[\cite{Ost14c}]\label{T:NETreeInDiam}  Binary trees $\{T_n\}_{n=1}^\infty$ do not admit
uniformly bilipschitz embeddings into diamonds
$\{D_n\}_{n=1}^\infty$.
\end{theorem}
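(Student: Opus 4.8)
The plan is to run a self-improvement argument in the spirit of Kloeckner's proof of Lemma \ref{L:GrandChi} and of Johnson--Schechtman, but with the roles reversed: here the diamonds are the host space, so the obstruction must be extracted from the internal geometry of $D_n$ rather than from uniform convexity of the target. First I would record why the cheap invariants are unavailable, since this dictates the shape of the argument. Both $\{T_n\}$ and $\{D_n\}$ embed into $L_1$ with uniformly bounded distortion, so any bi-Lipschitz invariant implied by $L_1$-embeddability is useless. The doubling property does separate binary trees from the Laakso graphs (which have bounded degree), but it does \emph{not} separate them from the diamonds: the poles of $D_n$ have degree $2^n$, so $D_n$ is very far from doubling and a naive volume/packing count yields nothing. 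Markov convexity is likewise inconclusive, because the Markov convexity constants of $\{D_n\}$ are themselves unbounded, and an embedding of $T_m$ may be allowed to land in an arbitrarily deep $D_{m(n)}$; monotonicity of the convexity constant under bi-Lipschitz maps then produces no contradiction. Hence the argument must use a feature of $\{D_n\}$ that is \emph{uniform across all $n$}, namely the self-similar reconvergence of geodesics.

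The decisive local fact is that geodesics in a diamond reconverge: if two shortest paths leave a common vertex ``downwards'' they must meet again at the far pole of the smallest subdiamond containing both, whereas in a tree two paths that separate never meet again. I would package this as a quantitative \emph{double-fork lemma}, the analogue of Lemma \ref{L:GrandChi}. Let $G$ be the tree $T_2$ (root, two children, four grandchildren) and suppose $\varphi\colon G\to D_n$ is distance non-decreasing and $C$-Lipschitz. The claim is that there is a constant $K$ independent of $n$, a fixed exponent $\beta$, and a choice of one grandchild $g$ under each child such that
\[\|\varphi(\mathrm{root})-\varphi(g)\|\le 2\Big(C-\frac{K}{C^{\beta}}\Big);\]
that is, reconvergence forces at least one grandchild in each branch to sit strictly closer to the root than the maximal distance $2C$ permitted by the Lipschitz bound. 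A single fork carries no such penalty --- indeed a fork embeds \emph{isometrically} into $D_2$, sending its centre to a pole and its three outer vertices to three pairwise distance-$2$ neighbours of that pole --- so the estimate must genuinely use depth two, where two successive splittings cannot both be realised without a reconvergence being charged.

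Granting the double-fork lemma, the global argument mirrors the proof of $c_X(T_n)\ge c_1(\log_2 n)^{1/q}$ given above. Assuming a bi-Lipschitz embedding $\varphi\colon T_m\to D_{m(n)}$ of distortion $C$, normalised to be distance non-decreasing and $C$-Lipschitz, I would at every internal vertex name its two children daughter and son, and then select beneath each previously chosen vertex the two grandchildren that the lemma certifies to be closest to it. The selected vertices, with half the induced tree metric, form a copy of $T_{\lfloor m/2\rfloor}$, and the lemma guarantees that the restriction of $\varphi$ to them has distortion at most $C-K/C^{\beta}$. Iterating $\lfloor\log_2 m\rfloor$ times lowers the distortion below $C-\lfloor\log_2 m\rfloor\,K/C^{\beta}$; since a distortion is always at least $1$, this forces $C\ge c(\log m)^{1/(\beta+1)}\to\infty$, contradicting the assumed uniform bound. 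Because each improvement comes from the scale-invariant reconvergence estimate, the contradiction is insensitive to how large the target is chosen --- exactly the point that defeats the invariant-based approaches.

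The main obstacle is the double-fork lemma itself. Unlike Kloeckner's fork lemma, I cannot invoke a modulus of convexity, since diamonds are not uniformly convex and carry many geodesics between a given pair of points; the gain must be wrung out of the purely combinatorial reconvergence of downward geodesics, made quantitative and, crucially, uniform in $n$. Concretely, once the four grandchildren are pushed near distance $2C$ from the root, I must show that the two downward geodesic bundles under the two children are forced to re-enter a common subdiamond, and that this reentry costs a definite amount $K/C^{\beta}$ in one of the root-to-grandchild distances. I must also verify that the selected set really carries half the tree metric, so that the induced map is again distance non-decreasing and $C$-Lipschitz and the iteration may proceed. Pinning down the exponent $\beta$ and checking that the penalty does not degrade as the hosting subdiamond is taken deeper is where the real work lies.
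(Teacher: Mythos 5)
Your reduction to a ``double-fork lemma'' is where the proposal breaks down: that lemma is false, and not marginally so, because $T_2$ (root, two children, four grandchildren) embeds \emph{isometrically} into $D_3$. Concretely, let $u,v$ be the poles of $D_3$, let $a,b$ be the two vertices created at level $1$, let $c,c'$ be the two vertices created at level $2$ subdividing the edge $ua$, and let $h,h'$ be the two vertices created at level $2$ subdividing the edge $av$. At level $3$, let $x$ be one of the two vertices subdividing the edge $ca$, let $e_1,e_2$ be the two vertices subdividing the edge $uc$, and let $g_1$ (resp.\ $g_2$) be one of the vertices subdividing the edge $ah$ (resp.\ $ah'$). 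Map the root to $x$, the children to $a$ and $c$, the grandchildren under the first child to $g_1,g_2$, and those under the second child to $e_1,e_2$. Every pairwise distance of $T_2$ is preserved exactly: $d(x,a)=d(x,c)=1$, $d(a,c)=2$, $d(a,g_i)=d(c,e_i)=1$, $d(x,g_i)=d(x,e_i)=d(g_1,g_2)=d(e_1,e_2)=2$, $d(a,e_i)=d(c,g_i)=3$, and $d(g_j,e_i)=4$; the lower bounds follow from the level function $d(u,\cdot)$ (the levels of $e_i,c,x,a,g_j$ are $1,2,3,4,5$) and the matching upper bounds from explicit paths through $a$, $x$, $c$. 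So there exist distance non-decreasing $1$-Lipschitz maps $T_2\to D_3$ (and, by self-similarity, into $D_n$ for every $n\ge 3$ at every dyadic scale) under which \emph{no} grandchild is pulled toward the root at all; no inequality of the form $\|\varphi(\mathrm{root})-\varphi(g)\|\le 2\bigl(C-K/C^{\beta}\bigr)$ can hold. This is fatal to the scheme and not just to a constant: a self-improvement iteration must remain valid as the distortion is driven down to $1$ (the terminal contradiction is precisely that the distortion would drop below $1$), and at distortion exactly $1$ there is provably no depth-two penalty. Your guiding heuristic --- that two successive splittings cannot both be realised without a reconvergence charge --- fails because the branching capacity of a point of $D_n$ depends on its position in the diamond hierarchy, and a tree edge need not be realised ``downwards'' into a subdiamond dominated by the parent's image: in the embedding above the root sits at a newest-generation middle vertex while its children sit at the two hierarchically adjacent \emph{poles} $a$ and $c$ that dominate it, and each of those poles then has ample room on its far side for its own two children.

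Note also that the paper offers no proof to compare against: it defers to \cite{Ost14c}, describing the argument there as an elementary but lengthy combinatorial one, and the counterexample above indicates why no short fixed-depth self-improvement can replace it. The genuine obstruction only emerges when the depth of the tree interacts with several layers of the diamond hierarchy (already at depth $3$ the grandchildren in the embedding above are newest-generation vertices whose far sides have branching scale too small to host two separated leaves), and controlling this requires tracking how the tree hierarchy and the diamond hierarchy can be ``misaligned'' --- including the inversion used above, where a child is mapped to a vertex dominating its parent --- across many scales simultaneously. If you wished to salvage the approach you would need, at minimum, a depth-$k$ lemma for some $k\ge 3$ with a penalty uniform in $n$, in the hosting scale, and in the position within the hierarchy; proving such a statement is essentially the lengthy combinatorial work of \cite{Ost14c}, not a shortcut around it. There is also a secondary unaddressed point: unlike in Kloeckner's Banach-space setting, you cannot rescale a map into a diamond by $\frac12$ after each selection step, so even a correct local lemma would have to be formulated scale-invariantly, while unweighted diamonds are self-similar only at dyadic scales and the initial embedding scale is arbitrary.
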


\begin{proof}[Without proof] The proof is elementary, but rather
lengthy combinatorial argument.\end{proof}

There is also a non-embeddability in the other direction: The fact
that diamonds $\{D_n\}$ do not admit uniformly bilipschitz
embeddings into binary trees $\{T_n\}$ is well known, it follows
immediately from the fact that $D_n$ $(n\ge 1)$ contains a cycle
of length $2^{n+1}$ isometrically, and the well-known observation
of Rabinovich and Raz \cite{RR98} stating that the distortion of
any embedding of an $m$-cycle into any tree is $\ge\frac{m}3-1$.
\medskip

\begin{remark} Mutual nonembeddability of Laakso graphs and binary
trees is much simpler: (1) Laakso graphs are non-embeddable into
trees because large Laakso graphs contain large cycles
isometrically. (2) Binary trees are not embeddable into Laakso
graphs because the Laakso graphs are uniformly doubling (see
\cite[p.~81]{Hei01} for the definition of a doubling metric
space), but binary trees are not uniformly doubling.
\end{remark}

Let us show that these results, in combination with some other
known results, imply that it is impossible to find a sequence
$\{C_n\}_{n=1}^\infty$ of finite metric spaces which admits
uniformly bilipschitz embeddings into a metric space $M$ if and
only if $M$ does not admit a bilipschitz embedding into a
superreflexive Banach space. Assume the contrary: Such sequence
$\{C_n\}_{n=1}^\infty$ exists. Then $\{C_n\}$ admits uniformly
bilipschitz embeddings into the infinite binary tree. Therefore,
by the result of Gupta \cite{Gup01}, the spaces
$\{C_n\}_{n=1}^\infty$ are uniformly bilipschitz-equivalent to
weighted trees $\{W_n\}_{n=1}^\infty$ . The trees
$\{W_n\}_{n=1}^\infty$  should admit, by a result Lee-Naor-Peres
\cite{LNP09} uniformly bilipschitz embeddings of increasing binary
trees (these authors proved that $\{W_n\}_{n=1}^\infty$  would
admit uniformly bilipschitz embeddings into $\ell_2$ otherwise).
Therefore, by Theorem \ref{T:NETreeInDiam} the spaces
$\{C_n\}_{n=1}^\infty$ cannot be embeddable into diamonds with
uniformly bounded distortion. Therefore they do not admit
uniformly bilipschitz embeddings into $D_\omega$ (since the union
of $\{D_i\}_{i=0}^\infty$ is dense in $D_\omega$). On the other
hand, Theorem \ref{T:JS} implies that $D_\omega$ does not admit a
bilipschitz embedding into a superreflexive space, a
contradiction.
\medskip

One can try to find a characterization of metric spaces which are
embeddable into superreflexive spaces in terms of some
inequalities for distances. Some hope for such characterization
was given by the already mentioned Markov convexity introduced by
Lee-Naor-Peres \cite{LNP09}, because it provides a reason for
non-embeddability into superreflexive Banach spaces of both binary
trees and diamonds (and many other trees and diamond-like spaces).

\begin{definition}[Lee-Naor-Peres \cite{LNP09}]\label{D:MarkConv}
Let $\{X_t\}_{t\in \mathbb{Z}}$ be a Markov chain on a state space
$\Omega$. Given an integer $k\ge 0$,  we denote by $\{\widetilde
X_t(k)\}_{t\in \mathbb{Z}}$ the process which equals $X_t$ for
time $t\le k$, and evolves independently (with respect to the same
transition probabilities) for time $t > k$. Fix $p>0$. A metric
space $(X,d_X)$ is called {\it Markov $p$-convex with constant
$\Pi$} if for every Markov chain $\{X_t\}_{t\in \mathbb{Z}}$ on a
state space $\Omega$, and every $f : \Omega \to X$,
\begin{equation}\label{E:MarkConv}
\sum_{k=0}^{\infty}\sum_{t\in \mathbb{Z}}\frac{\mathbb{E}\left[
d_X\left(f(X_t),f\left(\widetilde
X_t\left(t-2^{k}\right)\right)\right)^p\right]}{2^{kp}} \le \Pi^p
\cdot \sum_{t\in \mathbb{Z}}\mathbb{E}
\big[d_X(f(X_t),f(X_{t-1}))^p\big].
\end{equation}
The least constant $\Pi$ for which~\eqref{E:MarkConv} holds for
all Markov chains is called the {\it Markov $p$-convexity
constant} of $X$, and is denoted $\Pi_p(X)$. We say that $(X,d_X)$
is {\it Markov $p$-convex} if $\Pi_p(X) < \infty$.
\end{definition}

\begin{remark} The choice of the rather complicated left-hand side
in \eqref{E:MarkConv} is inspired by the original Bourgain's proof
\cite{Bou86} of the ``if'' part of Theorem \ref{T:Bourgain}.
\end{remark}

\begin{remark}\label{R:MarkovpVSq} It is unknown whether for general metric
spaces Markov $p$-convexity implies Markov $q$-convexity for
$q>p$. (This is known to be true for Banach spaces.)
\end{remark}

Lee-Naor-Peres \cite{LNP09} showed that Definition
\ref{D:MarkConv} is important for the theory of metric embeddings
by proving that each superreflexive space $X$ is Markov $q$-convex
for sufficiently large $q$. More precisely, it suffices to pick
$q$ such that $X$ has an equivalent $q$-convex norm (see
Definition \ref{D:ModConvQconv}), and by Theorem \ref{T:Pisier} of
Pisier, such $q\in[2,\infty)$ exists for each superreflexive
space.
\medskip

On the other hand, Lee-Naor-Peres have shown that for any
$0<p<\infty$ the Markov $p$-convexity constants of binary trees
$\{T_n\}$ are not uniformly bounded. Later Mendel and Naor
\cite{MN13} verified that the Markov $p$-convexity constants of
Laakso graphs are not uniformly bounded. Similar proof works for
diamonds $\{D_n\}$. See Theorem \ref{T:ThickNoMark} and Remark
\ref{R:MarkovConvDiam} for a more general result.

\begin{example}[Lee-Naor-Peres \cite{LNP09}]
For every $m \in \mathbb N$, we have $\Pi_p(T_{2^m}) \geq
2^{1-\frac{2}{p}}\cdot m^{\frac1p}.$
\end{example}

{

\begin{figure}
{
\begin{tikzpicture}[level/.style={sibling distance=70mm/#1}]
\node [circle,draw] (z) {\hbox{r}}
  child {node [circle,draw] (a) {\hbox{~~}}
    child {node [circle,draw] (b) {\hbox{~~}}
      child {node [circle,draw]  {\hbox{l}}}
      child {node [circle,draw]  {\hbox{l}}}
    }
    child {node [circle,draw] (g) {\hbox{~~}}
      child {node [circle,draw]  {\hbox{l}}}
      child {node [circle,draw]  {\hbox{l}}}
    }
  }
  child {node [circle,draw] (a) {\hbox{~~}}
    child {node [circle,draw] (b) {\hbox{~~}}
      child {node [circle,draw]  {\hbox{l}}}
      child {node [circle,draw]  {\hbox{l}}}
    }
    child {node [circle,draw] (g) {\hbox{~~}}
      child {node [circle,draw]  {\hbox{l}}}
      child {node [circle,draw]  {\hbox{l}}}
      }
    };
\end{tikzpicture}
} \caption{$T_3$, with the root (r) and leaves (l) marked.}
\label{F:TreeRootLeaves}
\end{figure}

}

\begin{proof} Simplifying the description of the chain somewhat (precise description of $\Omega$ and the map $f$ requires some
formalities), we consider only times $t=1,\dots, 2^m$ and let
$\{X_t\}_{t=0}^m$ be the downward random walk on $T_{2^m}$ which
is at the root at time $t=0$ and $X_{t+1}$ is obtained from $X_t$
by moving down-left or down-right with probability $\frac12$ each,
see Figure \ref{F:TreeRootLeaves}. We also assume that $X_t$ is at
the root with probability $1$ if $t<0$ (here more formal
description of the chain is needed) and that for $t>2^m$ we have
$X_{t+1}=X_t$ (this is usually expressed by saying that {\it
leaves are absorbing states}). Then
$$
\sum_{t=1}^{2^m} \mathbb E\left[d_{T_{2^m}}(X_t,X_{t-1})^p\right]
= 2^m.
$$
Moreover, in the downward random walk, after splitting at time $r
\leq 2^m$ with probability at least $\frac12$ two independent
walks will accumulate distance which is at least twice the number
of steps (until a leaf is encountered). Thus
\[\begin{split}
\sum_{k=0}^m\sum_{t=1}^{2^{m}}\frac{\mathbb{E}\left[d_{T_{2^m}}\left(X_{t},\widetilde
X_{t}\left(t-2^{k}\right)\right)^p\right]}{2^{kp}} &\geq
\sum_{k=0}^{m-1} \sum_{t=2^k}^{2^m} \frac{1}{2^{kp}} \cdot \frac12
\cdot 2^{(k+1)p}\\& \geq m2^{m-1}2^{p-1}\\&=2^{p-2}\cdot m \cdot
2^{m}.
\end{split}\] The claim follows.
\end{proof}

For diamond graphs and Laakso graphs the argument is similar, but
more complicated, because in such graphs the trajectories can come
close after separation.\medskip

After uniting the reasons for non-embeddability for diamonds and
trees one can hope to show that Markov convexity characterizes
metric spaces which are embeddable into superreflexive spaces. It
turns out that this is not the case. It was shown by Li
\cite{Li14,Li14+} that the Heisenberg group
$\mathbb{H}(\mathbb{R})$ (see Definition \ref{D:Heisenberg}) is
Markov convex. On the other hand it is known that the Heisenberg
group does not admit a bilipschitz embedding into any
superreflexive Banach space \cite{CK06,LN06}. (It is worth
mentioning that in the present context we may consider the
discrete Heisenberg group $\mathbb{H}(\mathbb{Z})$ consisting of
the matrices with integer entries of the form shown in Definition
\ref{D:Heisenberg}, endowed with its word distance, see Definition
\ref{D:GroupDef}.)\medskip

I suggest the following problem which is open as far as I know.

\begin{problem}\label{P:MarkovTestSp} Does there exist a  test-space for
superreflexivity which is Mar\-kov $p$-convex for some
$0<p<\infty$? (Or a sequence of test-spaces with uniformly bounded
Markov $p$-convexity constants?)
\end{problem}

\begin{remark} The Heisenberg group $H(\mathbb{Z})$ (with integer entries) has two properties needed for the test-space in Problem \ref{P:MarkovTestSp}: it is not embeddable into
any superreflexive space and is Markov convex. The only needed
property which it does not have is: embeddability into each
nonsuperreflexive space. Cheeger and Kleiner \cite{CK10} proved
that $H(\mathbb{Z})$ is not embeddable into some nonsuperreflexive
Banach spaces, for example, into $L_1(0,1)$.
\end{remark}

One more problem which I would like to mention here is:

\begin{problem}[Naor, July 2013]\label{P:UnivTestSp} Does there exist a
sequence of finite metric spa\-ces $\{M_i\}_{i=1}^\infty$ which is
a sequence of test-spaces for superreflexivity with the following
universality property: if $\{A_i\}_{i=1}^\infty$ is a sequence of
test-spaces for superreflexivity, then there exist uniformly
bilipschitz embeddings $E_i:A_i\to M_{n(i)}$, where
$\{n(i)\}_{i=1}^\infty$ is some sequence of positive integers?
\end{problem}

\subsection{Characterization of superreflexivity in terms of one
test-space}

Baudier \cite{Bau07} strengthened the ``only if'' direction of
Bourgain's characterization  and proved

\begin{theorem}[Baudier \cite{Bau07}]\label{T:Baudier} A Banach space $X$ is
nonsuperreflexive if and only if it admits bilipschitz embedding
of the infinite binary tree $T_\infty$.
\end{theorem}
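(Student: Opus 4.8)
The \emph{if} direction is an immediate consequence of Bourgain's Theorem~\ref{T:Bourgain}. For each $n$ the finite tree $T_n$ sits isometrically inside $T_\infty$ as the subgraph on the vertices represented by sequences of length at most $n$ (geodesics between two such vertices only use their common ancestors and the two descending branches, all of length $\le n$). Hence a single bilipschitz embedding $f\colon T_\infty\to X$ of distortion $D$ restricts to embeddings of all the $T_n$ of distortion at most $D$, so $\{T_n\}_{n=1}^\infty$ embed with uniformly bounded distortions, and Theorem~\ref{T:Bourgain} forces $X$ to be nonsuperreflexive.

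Thus the whole content lies in the \emph{only if} direction: from nonsuperreflexivity I must produce \emph{one} embedding of the infinite tree, rather than uniformly good embeddings of the finite ones. The plan is to strengthen the construction used in the proof of the ``only if'' part of Theorem~\ref{T:Bourgain}, where $T_n$ is embedded via a finite James sequence $x_1,\dots,x_{2^{n+1}-1}$ furnished by Theorem~\ref{T:NSRChar} through $F_n(t)=\sum_{s\le t}x_{\varphi(s)}$. Naively passing to the limit fails for two linked reasons. First, Bourgain's linear order $\varphi$ is modelled on the dyadic rationals and is therefore \emph{dense}, so the sequences for different $n$ are not nested and cannot be amalgamated into a single sequence. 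Second, a single infinite James sequence with a fixed constant $\alpha>0$ exists only when $X$ is nonreflexive (James's reflexivity criterion), whereas I must also treat reflexive but nonsuperreflexive spaces; for these there is no rigid infinite $\ell_1$-like skeleton, and the embedding must genuinely splice together different finite pieces placed at different locations.

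My strategy is therefore a concatenation (gluing) scheme. Using the implication \eqref{I:NonSR}$\Rightarrow$\eqref{I:FlatForall} of Theorem~\ref{T:NSRChar} in the hard regime $\alpha\to 1$, I obtain for a rapidly increasing sequence of depths $d_1,d_2,\dots$ almost-isometric embeddings $G_k\colon T_{d_k}\to X$ of distortion at most $1+\ep_k$ with $\sum_k\ep_k<\infty$, each sending its root to the origin (here the upper bound $\|x_i\|\le 1$ gives the Lipschitz estimate and the James constant $\alpha_k$ gives the lower estimate $1/\alpha_k\to 1$). I then build $F\colon T_\infty\to X$ strip by strip: cut $T_\infty$ into successive strips of depths $d_1,d_2,\dots$, and on a vertex $v$ lying in the $k$-th strip below its strip-root $r(v)$ set $F(v)=F(r(v))+G_k(\bar v)$, where $\bar v$ is the copy of $v$ in $T_{d_k}$ and the copy of $G_k$ is placed appropriately. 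The upper bilipschitz bound is automatic from the triangle inequality; within a single strip the lower bound is Bourgain's estimate with constant $\alpha_k\to 1$; and accumulating the controlled loss across the strips a pair must cross, the total distortion is bounded by a convergent product $\prod_k(1+\ep_k)<\infty$. This is exactly why the almost-isometric case $\alpha\in[\tfrac12,1)$ of Theorem~\ref{T:NSRChar} is needed, rather than the easy case sufficient for Theorem~\ref{T:Bourgain}.

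The main obstacle is the \emph{cross-strip} lower bound. If $v$ lies in a deep strip below a strip-root $\lambda$ and $w$ lies off that branch, then $F(v)-F(w)=\bigl(F(\lambda)-F(w)\bigr)+G_k(\bar v)$, and I must rule out cancellation between the large ancestral vector $A=F(\lambda)-F(w)$ and the freshly attached vector $B=G_k(\bar v)$; that is, I need an $\ell_1$-type junction estimate $\|A+B\|\ge c(\|A\|+\|B\|)$. Mere disjointness or near-orthogonality of directions is useless — in $\ell_2$ it only yields $\sqrt{\|A\|^2+\|B\|^2}$, which is precisely why superreflexive spaces admit no such embedding — so nonsuperreflexivity must be used again at every junction. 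The resolution is to choose the James sequence defining each $G_k$ in coordination with the already-built configuration, so that its downward vectors are approximately normed by functionals that also norm the relevant ancestral differences: a Hahn--Banach functional $f$ supporting the descent from the current strip-root then gives $\|A+B\|\ge f(A+B)$ close to $\|A\|+\|B\|$, using that every off-branch vertex $w$ is reached from $\lambda$ by first ascending to the meet. Arranging these supporting functionals to remain mutually compatible as the strips are added — so that the alignment chosen at earlier strip-roots is not destroyed by the James sequences chosen later — is the technical heart of the argument; once it is in place, assembling the estimates over all strips yields a bilipschitz embedding of $T_\infty$ into $X$ of finite distortion.
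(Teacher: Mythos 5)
Your ``if'' direction is correct and standard: $T_n$ sits isometrically in $T_\infty$, so a single embedding of $T_\infty$ restricts to uniformly good embeddings of all $T_n$, and the hard (``if'') direction of Theorem \ref{T:Bourgain} applies. The gap is in the ``only if'' direction, at exactly the place you yourself call ``the technical heart'': the cross-strip lower bound is never proved, only asserted to be arrangeable by keeping supporting functionals ``mutually compatible''. Moreover, the mechanism you propose runs into a concrete obstruction. The downward increments of a new strip embedding $G_k$ below a strip-root $\lambda$ are all normed, in the James/Bourgain construction, by a single summing-type functional, and one can imagine aligning that functional with \emph{one} ancestral vector; but the junction estimate is needed simultaneously for \emph{every} off-branch vertex $w$, and the vectors $A_w=F(\lambda)-F(w)$ are not compatible with one another. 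For instance, take $w$ at the same depth as $\lambda$ on the other side of their meet $m$: then $A_w$ is a difference of two downward vectors of equal length, so any summing-type functional giving value $\approx\theta\cdot(\mathrm{length})$ on downward increments annihilates $A_w$, even though $\|A_w\|\ge \alpha\, d(\lambda,w)>0$. For such pairs the functional adapted to your new James sequence yields no lower bound at all, and the functionals that do norm $A_w$ vary with $w$; forcing one new James sequence to cooperate with all of them at once is precisely the unproved step. The bookkeeping is also off: losses at junctions come from cancellation, not from the distortions $1+\ep_k$ of the pieces, so the claimed bound $\prod_k(1+\ep_k)$ does not follow, and the appeal to the hard case $\alpha\in[\frac12,1)$ of Theorem \ref{T:NSRChar} is neither sufficient to repair this nor actually necessary for the theorem.

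There is a route, available inside this paper, that avoids the junction problem entirely: finite determination. The tree $T_\infty$ is locally finite, every finite subset of it lies in some $T_n$, and by the ``only if'' part of Theorem \ref{T:Bourgain} the trees $T_n$ embed into any nonsuperreflexive $X$ with uniformly bounded distortions; Theorem \ref{T:bilip} then gives a bilipschitz embedding of $T_\infty$ itself into $X$. The gluing difficulties you are fighting are resolved there once and for all, not by coordinating James sequences across scales, but by Mazur-type constructions of finite-dimensional decompositions (the same technique the paper sketches when embedding products of finite binary trees to handle hyperbolic groups): one places the finite pieces into finite-dimensional subspaces $X_i$ whose direct sum is uniformly isomorphic to its closed linear span, and the decomposition structure supplies the missing lower bounds between pieces. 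So either invoke Theorem \ref{T:bilip} at the point where you start cutting $T_\infty$ into strips, or supply an actual proof of the junction estimate; as written, the second half of your argument is a plan rather than a proof.
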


The following result hinted that possibly the Cayley graph of any
nontrivially complicated hyperbolic group is the test-space for
superreflexivity:

\begin{theorem}[Buyalo--Dranishnikov--Schroe\-der \cite{BDS07}]\label{T:BDS07} Every Gromov hyperbolic group admits a
quasi-isometric embedding into the product of finitely many copies
of the binary tree.
\end{theorem}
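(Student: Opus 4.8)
The plan is to pass from the group to its boundary at infinity, build the embedding there, and then transport it back. Since $G$ is Gromov hyperbolic and acts cocompactly on its Cayley graph, the Cayley graph is a \emph{visual} hyperbolic space, so $G$ is quasi-isometric to a hyperbolic approximation (hyperbolic cone) of the compact metric space $Z=\partial_\infty G$ equipped with a visual metric. The binary tree $T_\infty$ is itself $0$-hyperbolic and its boundary is a Cantor set carrying a natural ultrametric; more generally any tree of bounded valence embeds quasi-isometrically into $T_\infty$. Thus it suffices to (i) realize $Z$ inside a product of boundaries of bounded-valence trees in a controlled way, and (ii) promote this to a quasi-isometric embedding of the hyperbolic approximation of $Z$ into the product of those trees.

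The dimensional input is the following. By Gromov's theorem a hyperbolic group has finite asymptotic dimension, and, via the work of Dranishnikov and of Buyalo--Schroeder relating the asymptotic dimension of a visual hyperbolic space to the \emph{capacity dimension} of its boundary, the space $Z$ has finite capacity dimension, say $n$; moreover $Z$ is doubling, because the Cayley graph has bounded valence. Finite capacity dimension means that at every scale $2^{-k}$ one can cover $Z$ by sets of uniformly bounded multiplicity that split into $n+1$ colour classes, each class consisting of families whose members are well separated relative to their diameters. I would fix such coverings simultaneously at all scales, nested from one scale to the next, so that they organise $Z$ into $n+1$ approximately ultrametric ``branch structures''.

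For each colour $i\in\{0,\dots,n\}$ I would build a rooted tree $T_i$ whose vertices are the chosen cover elements of colour $i$ across all scales, with an edge joining each element to the element one scale coarser that contains it; the doubling property makes each $T_i$ of bounded valence, hence quasi-isometrically embeddable in $T_\infty$. Sending a point $z\in Z$ to the tuple of descending sequences of cover elements containing it yields a map $Z\to \partial T_0\times\dots\times\partial T_n$. The colouring is arranged precisely so that any two points of $Z$ that are $2^{-k}$-separated lie in distinct cover elements of \emph{some} single colour at scale $k$; this is what guarantees that the combined map is quasi-symmetric onto its image, the key point being that at least one coordinate tree witnesses the branching of any given pair.

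Finally I would lift this boundary map to the level of hyperbolic approximations, defining a map $F\colon \mathrm{Hyp}(Z)\to T_0\times\dots\times T_n$ directly on vertices (scale--element pairs) rather than through any boundary functor. The hard part, and the reason one cannot argue purely by functoriality, is that a product of trees is \emph{not} hyperbolic (already $\mathbb{R}\times\mathbb{R}$ is not), so the target carries no usable boundary of its own. One must instead check by hand that the word metric of $G\simeq\mathrm{Hyp}(Z)$ is comparable, up to multiplicative and additive constants uniform in the scale, to the $\ell_\infty$ product metric $\max_i d_{T_i}$ on the image. The upper bound is routine from the Lipschitz behaviour of each coordinate; the genuine obstacle is the lower bound, namely that whenever two vertices are far apart in $\mathrm{Hyp}(Z)$ their connecting geodesic descends to a low scale before separating, and then the bounded multiplicity of the covers forces one of the $n+1$ colours to record this descent as a large distance in the corresponding $T_i$. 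Verifying that these constants do not degrade with the scale is exactly where the finiteness of the capacity dimension does its real work, and it is the crux of the argument.
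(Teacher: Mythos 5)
The paper does not prove Theorem \ref{T:BDS07} at all: it is quoted from \cite{BDS07} and used as a black box in the discussion of test-spaces for superreflexivity, so there is no internal proof to compare yours against. Measured against the original argument of Buyalo--Dranishnikov--Schroeder, your outline reconstructs essentially their route: replace $G$ by a hyperbolic approximation of the boundary $Z=\partial_\infty G$ with a visual metric, use finite capacity (linearly controlled) dimension together with the doubling property to produce coloured coverings at all dyadic scales, build one bounded-valence tree per colour out of the nested cover elements (bounded valence giving quasi-isometric embeddability into the binary tree), and then verify the two-sided distance estimates by hand, the lower bound being the crux precisely because a product of trees is not hyperbolic. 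Two caveats are worth recording. First, your dimensional input is cited in the wrong direction: Buyalo's inequality bounds the asymptotic dimension of a visual hyperbolic space \emph{above} by the capacity dimension of its boundary plus one, so finiteness of $\mathrm{asdim}\, G$ does not by itself yield finiteness of the capacity dimension of $Z$; what is actually needed is Buyalo--Lebedeva's theorem that for hyperbolic groups the capacity dimension of the boundary equals its (finite) topological dimension, or alternatively the Lang--Schlichenmaier theorem that doubling spaces have finite Nagata dimension. Second, your sketch explicitly defers the step that occupies most of \cite{BDS07}: making the coverings coherent across scales and proving the lower distance bound for the lifted map with constants independent of the scale. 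So what you have is a correct plan, with the right architecture and the right identification of where the difficulty lies, rather than a complete proof; given that the statement is a deep theorem quoted without proof in this survey, that is as much as a blind attempt could reasonably deliver.
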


Let us introduce notions used in this statement.

\begin{definition}\label{D:GroupDef} Let $G$ be a group generated by a finite set $S$.

\begin{itemize}

\item The {\it Cayley graph} $\cay(G,S)$ is defined as a graph
whose vertex set is $G$ and whose edge set is the set of all pairs
of the form $(g,sg)$, where $g\in G$, $s\in S$.

\item In this context we consider each edge as a line segment of
length $1$ and endow $\cay(G,S)$ with the shortest path distance.
The restriction of this distance to $G$ is called the {\it word
distance}.

\item  Let $u$ and $v$ be two elements in a metric space
$(M,d_M)$. A {\it $uv$-geodesic} is a distance-preserving map
$g:[0,d_M(u,v)]\to M$ such that $g(0)=u$ and $g(d_M(u,v))=v$
(where $[0,d_M(u,v)]$ is an interval of the real line with the
distance inherited from $\mathbb{R}$).

\item A metric space $M$ is {\it geodesic} if for any two points
$u$ and $v$ in $M$, there is a $uv$-geodesic in $M$; $\cay(G,S)$,
with edges identified with line segments and with the shortest
path distance is a geodesic metric space.

\item A geodesic metric space $M$ is called {\it
$\delta$-hyperbolic}, if for each triple $u,v,w\in M$ and any
choice of a $uv$-geodesic, $vw$-geodesic, and $wu$-geodesics, each
of these geodesics is in the $\delta$-neighborhood of the union of
the other two.

\item A group is {\it word hyperbolic} or {\it Gromov hyperbolic}
if $\cay(G,S)$ is $\delta$-hyperbolic for some $\delta<\infty$.
\end{itemize}
\end{definition}

\begin{remark}
\begin{itemize}
\item It might seem that the definition of hyperbolicity depends
on the choice of the generating set $S$.

\item It turns out that the value of $\delta$ depends on $S$, but
its existence does not.

\item The theory of hyperbolic groups was created by Gromov
\cite{Gro87}, although some related results were known before. The
theory of hyperbolic groups plays an important role in group
theory, geometry, and topology.

\item Theory of hyperbolic groups is presented in many sources,
see \cite{AB+91} and \cite{BH99}.

\end{itemize}
\end{remark}

\begin{remark} It is worth mentioning that the identification of edges of $\cay(G,S)$ with line segments is
useful and important when we study geodesics and introduce the
definition of hyperbolicity. In the theory of embeddings it is
much more convenient to consider $\cay(G,S)$ as a countable set
(it is countable because we consider groups generated by finite
sets), endowed with the shortest path distance (in the
graph-theoretic sense), in this context it is called the {\it word
distance}. See \cite{Ost13b,Ost14e} for relations between
embeddability of graphs as vertex sets and as geodesic metric
spaces.
\end{remark}

It is worth mentioning that although different finite generating
sets $S_1$ and $S_2$ in $G$ lead to different word distances on
$G$, the resulting metric spaces are bilipschitz equivalent: the
identity map $(G,d_{S_1})\to (G,d_{S_2})$ is bilipschitz, where
$d_{S_1}$ is the word distance corresponding to $S_1$ and
$d_{S_2}$ is the word distance corresponding to $S_2$.

We also need the following definitions used in \cite{BDS07}. A map
$f:X\to Y$ between metric spaces $(X,d_X)$ and $(Y,d_Y)$ is called
a {\it quasi-isometric embedding} if there are $a_1, a_2 > 0$ and
$b\ge 0$, such that
\begin{equation}\label{E:QuasiIsom} a_1 d_X(u,v)- b\le d_Y(f(u), f(v))\le a_2d_X(u,v) + b\end{equation} for
all $u,v\in X$. By a {\it binary tree} the authors of \cite{BDS07}
mean an infinite tree in which each vertex has degree $3$. By a
{\it product of trees}, denoted $(\oplus_{i=1}^n T(i))_1$, we mean
their Cartesian product with the $\ell_1$-metric, that is,
\begin{equation}\label{E:L_1Prod}d(\{u_i\},\{v_i\})= \sum_{i=1}^n
d_{T(i)}(u_i,v_i).\end{equation}

\begin{observation} The binary tree defined as an infinite tree in
which each vertex has degree $3$ is isometric to a subset of the
product in the sense of \eqref{E:L_1Prod} of three copies of
$T_\infty$.\end{observation}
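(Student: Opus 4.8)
The plan is to write down an explicit isometric embedding of the $3$-regular tree, which I will denote $\Gamma$, into $(T_\infty\oplus T_\infty\oplus T_\infty)_1$ by fixing a basepoint and splitting $\Gamma$ into three rooted subtrees. Concretely, I would pick any vertex $o\in\Gamma$ and let $v_1,v_2,v_3$ be its three neighbours; deleting $o$ breaks $\Gamma$ into three components $\Gamma_1,\Gamma_2,\Gamma_3$, where $\Gamma_i$ contains $v_i$. Since every vertex of $\Gamma$ has degree $3$ and $v_i$ loses exactly one edge (the one to $o$) when $o$ is deleted, in the rooted tree $(\Gamma_i,v_i)$ the root $v_i$ has two children and every other vertex also has two children; hence $(\Gamma_i,v_i)$ is isometric to $T_\infty$. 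Writing $h(x)=d_\Gamma(x,v_i)$ for $x\in\Gamma_i$, the tree metric decomposes into three cases: $d_\Gamma(x,y)=d_{\Gamma_i}(x,y)$ if $x,y$ lie in the same $\Gamma_i$; $d_\Gamma(x,y)=h(x)+h(y)+2$ if $x\in\Gamma_i$, $y\in\Gamma_j$ with $i\ne j$; and $d_\Gamma(o,y)=h(y)+1$ for $y\in\Gamma_j$.

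The embedding $\iota=(\pi_1,\pi_2,\pi_3)$ is then built coordinatewise, with each target copy of $T_\infty$ carrying one subtree faithfully and collapsing the rest. In coordinate $i$, let $\rho_i$ denote the root of the $i$-th copy of $T_\infty$ and $L_i$ one of its two children; identify $\Gamma_i$ with the subtree of the $i$-th copy hanging below $L_i$ (which is again a copy of $T_\infty$) by sending $v_i$ to $L_i$, and send every vertex of $\Gamma$ that does not lie in $\Gamma_i$ --- including $o$ and the two other subtrees --- to the root $\rho_i$. Thus $d_i(\pi_i(x),\rho_i)=1+h(x)$ for $x\in\Gamma_i$, while $\pi_i$ is constant equal to $\rho_i$ off $\Gamma_i$.

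The only real subtlety, and the point I would flag as the crux, is the decision to place each root $v_i$ at depth one (at $L_i$ rather than at $\rho_i$) while collapsing the complement of $\Gamma_i$ to $\rho_i$: this is exactly what restores the two ``spoke'' edges $v_io$ and $ov_j$ that any path between different subtrees must traverse, and getting this offset wrong would shrink the cross-subtree distances. With the construction in hand, the proof reduces to verifying the three cases of the metric decomposition. If $x,y\in\Gamma_i$, coordinate $i$ contributes $d_{\Gamma_i}(x,y)$ and the other two coordinates contribute $0$ (both points sit at the respective roots), giving $d_\Gamma(x,y)$. If $x\in\Gamma_i$ and $y\in\Gamma_j$ with $i\ne j$, coordinate $i$ contributes $1+h(x)$ (since $y$ is sent to $\rho_i$), coordinate $j$ contributes $1+h(y)$, and the third coordinate contributes $0$, for a total of $h(x)+h(y)+2$; and $d(\iota(o),\iota(y))$ reduces to the single nonzero coordinate $j$ and equals $1+h(y)$. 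In every case the sum of the coordinatewise distances equals $d_\Gamma$, so $\iota$ is distance-preserving, hence injective, and realizes $\Gamma$ as a subset of $(T_\infty\oplus T_\infty\oplus T_\infty)_1$, which proves the observation.
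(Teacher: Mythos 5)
Your proof is correct, and it fills in details the paper never supplies: the observation is stated there without any proof at all. Your decomposition of the $3$-regular tree into the three rooted binary subtrees obtained by deleting a base vertex, together with the depth-one offset (sending each root $v_i$ to a child $L_i$ of the root $\rho_i$ while collapsing everything outside $\Gamma_i$ to $\rho_i$, so that the two edges through $o$ are recovered in the $\ell_1$-sum), is exactly the natural argument the paper implicitly relies on, and your three-case verification of the distance identity is complete.
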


Therefore we may replace the infinite binary tree by $T_\infty$ in
the statement of Theorem \ref{T:BDS07}. Hence the
Buyalo--Dranishnikov--Schroe\-der Theorem \ref{T:BDS07} in
combination with the Baudier theorem  \ref{T:Baudier} implies the
existence of a quasi-isometric embedding of any Gromov hyperbolic
group, which is embeddable into product of $n$ copies of
$T_\infty$, into any Banach space containing an isomorphic copy of
a direct sum of $n$ nonsuperreflexive spaces. The fact that
Buyalo-Dranishnikov-Schroeder consider quasi-isometric embeddings
(which are weaker than bilipschitz) is not a problem. One can
easily prove the following lemma. Recall that a metric space is
called {\it locally finite} if all balls of finite radius in it
have finite cardinality (a detailed proof of Lemma
\ref{L:QIBanBilip} can be found in \cite[Lemma 2.3]{Ost14c}).

\begin{lemma}\label{L:QIBanBilip} If a locally finite metric space $M$ admits a
quasi-isometric embedding into an infinite-dimensional Banach
space $X$, then $M$ admits a bilipschitz embedding into $X$.
\end{lemma}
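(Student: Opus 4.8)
The plan is to exploit the fact that the additive constant $b$ in the quasi-isometric inequality \eqref{E:QuasiIsom} spoils the estimates only at small scales, and to repair these separately using local finiteness together with the room afforded by infinite-dimensionality. Write $f:M\to X$ for the given quasi-isometric embedding, with constants $a_1,a_2,b$ as in \eqref{E:QuasiIsom}, and fix a threshold $\lambda:=2b/a_1$. For any pair with $d_M(u,v)\ge\lambda$ one has $\|f(u)-f(v)\|\ge a_1 d_M(u,v)-b\ge \tfrac{a_1}{2}\, d_M(u,v)$, so on the large-scale part $f$ is already bilipschitz from below; the upper estimate in \eqref{E:QuasiIsom}, however, is not genuinely Lipschitz, and below the scale $\lambda$ the map $f$ carries no lower bound at all (it may even identify points). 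The first step is therefore to replace $f$ by an honestly Lipschitz map $F$ that still records the large-scale geometry: choose a maximal $\lambda$-separated subset (a net) $N\subseteq M$, which exists and is again locally finite, and set $F:=\sum_{x\in N}\psi_x\, f(x)$ for a Lipschitz partition of unity $\{\psi_x\}$ subordinate to the balls $B(x,2\lambda)$. Local finiteness guarantees that this sum is locally finite and that $F$ is Lipschitz, while a routine comparison shows that $F$ remains bilipschitz from below on pairs at distance $\ge\lambda$.

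Next I would build a second, genuinely Lipschitz map $\phi$ handling the small scales. Assign each point of $M$ to a nearest net point (ties broken arbitrarily); by maximality every point lies within $\lambda$ of its net point, so the clusters $\{C_x\}_{x\in N}$ have diameter at most $2\lambda$ and, crucially, are \emph{finite} because $M$ is locally finite. Each finite cluster $C_x$ bilipschitz embeds into a finite-dimensional Euclidean space with distortion controlled by its cardinality, and by Dvoretzky's theorem such a Euclidean block sits, up to a factor $(1+\ep)$, inside $X$. The role of infinite-dimensionality is that one may place the images of distinct clusters into nearly independent directions: invoking the classical existence of a basic sequence in $X$, one fixes a Schauder decomposition and assigns successive clusters to successive blocks, rescaling each cluster so that $\phi$ is globally Lipschitz while the block projection recovers $\tfrac{1}{K}\,\|\phi(u)-\phi(v)\|\ge c\, d_M(u,v)$ for $u,v$ in a common cluster, $K$ being the basis constant.

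Finally I would amalgamate the two maps, roughly as $g:=F\oplus\phi$ realized inside a single copy of $X$, and verify the two inequalities of Definition \ref{D:lip}: the upper bound follows since both $F$ and $\phi$ are Lipschitz, and for the lower bound one splits into the case $d_M(u,v)\ge\lambda$, supplied by $F$, and the case $d_M(u,v)<\lambda$, where $u,v$ lie in a bounded region met by boundedly many clusters and the block structure of $\phi$ provides the estimate. The main obstacle I anticipate is precisely this amalgamation: since an arbitrary infinite-dimensional $X$ need not contain a complemented copy of $X\oplus Y$, one cannot simply ``adjoin a coordinate,'' and the small-scale perturbation $\phi$ must be installed in directions that do not destructively interfere with the values of $F$. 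Making the clusters' finite-dimensional blocks quantitatively independent of the large-scale map, and doing so with a single distortion bound valid simultaneously across all the (individually finite) clusters, is the delicate point; this is where local finiteness and the freedom from infinite dimension must be used in tandem, and it is the step I expect to absorb most of the work.
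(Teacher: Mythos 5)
Your reduction of the large-scale part to the threshold $\lambda=2b/a_1$ is correct, but the small-scale half of your plan --- exactly the step you defer as ``the delicate point'' --- is not a technicality that more work will close; it is where the whole difficulty lives, and in the generality you have set up it is fatal. ``Distortion controlled by its cardinality'' is not a uniform bound: local finiteness makes each cluster finite but places no bound on cluster cardinalities, so after the rescaling needed to make $\phi$ Lipschitz, your lower estimate degenerates as the clusters grow, and the assembled map is not bilipschitz. Worse, no choice of cluster embeddings can help, because \eqref{E:QuasiIsom} carries no information below the scale $b/a_1$, so the clusters of a locally finite space admitting a quasi-isometric embedding can be arbitrary finite metric spaces. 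Concretely, let $M$ be the union of $n$-point expander graphs scaled to diameter $1$, the $n$-th placed at the point $4^n$ of a ray; collapsing each expander to a point gives a quasi-isometric embedding of $M$ into $\ell_2$ with $a_1=a_2=b=1$, yet a bilipschitz embedding of $M$ into $\ell_2$ would restrict to embeddings of all these expanders with uniformly bounded distortion, contradicting the $\Omega(\log n)$ lower bound of \cite{LLR95}. (Read literally, this shows the hypothesis ``locally finite'' must in effect be strengthened to ``uniformly discrete''; that is the case in all applications of the lemma in this paper --- Cayley graphs with their word distance --- and it is the setting in which the cited proof \cite[Lemma~2.3]{Ost14c} is used.) Once $M$ is uniformly discrete, with $\theta=\inf_{u\neq v}d_M(u,v)>0$, there are no small scales at all: only pairs with $\theta\le d_M(u,v)<2b/a_1$ need repair, and for those one needs only a bounded, uniformly separated finite set in $X$ (for instance a scaled Auerbach basis of a finite-dimensional subspace), not bilipschitz copies of arbitrary finite metric spaces. (A lesser but real further issue: your partition-of-unity map $F$ need not even be Lipschitz, since the usual estimate requires bounded covering multiplicity, i.e.\ bounded geometry, which local finiteness does not provide.)

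The second genuine obstacle is the amalgamation, which you name but cannot resolve as set up: your map $F$ has range spanning an infinite-dimensional, possibly dense, subspace of $X$, so there is no way to choose the blocks carrying $\phi$ ``independently'' of $F$ (Mazur's technique needs the subspace being avoided to be finite-dimensional), and, as you observe, a general $X$ --- e.g.\ the Gowers--Maurey spaces \cite{GM93} mentioned in the paper --- offers no complemented decomposition to retreat to. The argument behind the lemma removes both obstacles at a stroke by localizing: by finite determination (Theorem \ref{T:bilip}) it suffices to embed every finite subset $F\subset M$ with distortion bounded by a constant independent of $F$. For finite $F$ the subspace $V=\mathrm{span}\,f(F)$ is finite-dimensional, so Mazur's technique yields a finite-dimensional $W\subset X$ with $\|v+w\|\ge\frac13\max\left(\|v\|,\|w\|\right)$ for all $v\in V$, $w\in W$; taking an Auerbach basis $(w_i)_{i=1}^{|F|}$ of $W$ and setting $g(x_i)=f(x_i)+\Lambda w_i$, with $\Lambda$ depending only on $a_1,a_2,b,\theta$, one checks directly that $g$ has distortion bounded by a function of $a_1,a_2,b,\theta$ alone: pairs with $d_M\ge 2b/a_1$ are controlled by the $f$-component, pairs with $\theta\le d_M<2b/a_1$ by the $\Lambda w_i$-component, and the Lipschitz upper bound uses uniform discreteness to absorb $b$ and $\Lambda$. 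This reduction to finite subsets is the key idea missing from your proposal; with it the ``independent directions'' exist and all estimates become uniform, and without it neither of your two difficulties can be overcome.
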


\begin{remark} One can easily construct a counterexample to a similar statement for
general metric spaces.
\end{remark}

{\bf Back to embeddings:} However, we know from results of
Gowers-Maurey \cite{GM93} that there exist nonsuperreflexive
spaces which do not contain isomorphically direct sums of any two
infinite-dimensional Banach spaces, so we do not get immediately
bilipschitz embeddability of hyperbolic groups into
nonsuperreflexive Banach spaces.
\medskip

Possibly this obstacle can be overcome by modifying Baudier's
proof of Theorem \ref{T:Baudier} for the case of a product of
several trees, but at this point a more general result is
available. It can  be stated as:\medskip

{\it Embeddability of locally finite spaces into Banach spaces is
finitely determined.}\medskip

We need the following version of the result on finite
determination (this statement also explains what we mean by
``finite determination''):

\begin{theorem}[\cite{Ost12}]\label{T:bilip} Let $A$ be a locally finite metric space whose
finite subsets admit bilipschitz embeddings into a Banach space
$X$ with uniformly bounded distortions. Then $A$ admits a
bilipschitz embedding into $X$.
\end{theorem}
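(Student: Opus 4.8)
The plan is to write $A$ as an increasing union of finite subsets and to assemble the given embeddings of these subsets into a single map \emph{directly inside $X$}. The subtlety, and the reason the statement is non-trivial, is that one cannot simply take an ultraproduct limit of the finite embeddings: that produces a bilipschitz embedding of $A$ into an ultrapower $X_{\mathcal U}$, which is only finitely representable in $X$ and need not be a subspace of $X$; nor can one place the pieces into a direct sum $(\sum\oplus X)$, since by the Gowers--Maurey examples such a sum need not embed into $X$. Thus the gluing must be performed within $X$ itself, and this is where both the infinite-dimensionality of $X$ and the local finiteness of $A$ are used.

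First I would record the reductions. Since $A$ is locally finite it is countable: fixing a base point $O\in A$, every point lies in some ball $B(O,\rho)$, and these balls are finite. I choose radii $\rho_1<\rho_2<\cdots\to\infty$ growing fast enough (say $\rho_{n+1}\ge 3\rho_n$) and set $A_n=B(O,\rho_n)$, so that $A=\bigcup_n A_n$, each $A_n$ is finite, and the points newly added in $A_{n+1}\setminus A_n$ are radially far from the core $A_{n-1}$. By hypothesis each $A_n$ admits an embedding of distortion at most $C$; after rescaling I may assume it is a map $g_n\colon A_n\to X$ with $d(x,y)\le\|g_n(x)-g_n(y)\|\le C\,d(x,y)$ for all $x,y\in A_n$.

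The heart of the argument is an inductive construction of the global map $f\colon A\to X$. Suppose $f$ has been defined on $A_n$ so that it is $1$-expanding and $D$-Lipschitz, with $D$ a uniform constant depending only on $C$. To extend $f$ over the new shell $A_{n+1}\setminus A_n$, I use the embedding $g_{n+1}$ of $A_{n+1}$ to reproduce the \emph{internal} geometry of the shell and of its interface with $A_n$, and I attach this shell to the existing image along a \emph{fresh} direction: since $X$ is infinite-dimensional, for the finite-dimensional subspace already spanned by the finite set $f(A_n)$ and any $\varepsilon>0$ there is, by a Riesz-type lemma, a norm-one vector almost orthogonal to that subspace. Adding a suitably scaled multiple of such a vector lets the new shell be placed so that (i) distances among points of the new and most recent shells are inherited from $g_{n+1}$ up to a bounded factor, while (ii) distances from new points to far, deep points of the core are dominated by the radial separation and are therefore automatically comparable to $\dist$ in $A$. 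The scale of the fresh-direction perturbation is taken to decay geometrically in $n$, so that the successive multiplicative losses in the distortion telescope to a finite product, keeping $D$ uniformly bounded. Because each point of $A$ enters the exhaustion at a definite finite stage, $f(a)$ is defined there as a finite expression, so $f$ genuinely maps into $X$ rather than into a larger space.

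Finally I would verify the two bilipschitz inequalities for $f$ by a case analysis on the radial separation of a pair $u,v\in A$: if $u,v$ lie in the same or in adjacent shells the bounds follow from the relevant $g_n$ together with the bounded perturbation; if they lie in far-apart shells the rapid growth $\rho_{n+1}\ge 3\rho_n$ forces $d(u,v)$ to be comparable to $\max(d(O,u),d(O,v))$, and the fresh-direction attachment guarantees that the images are separated on the same order. I expect the main obstacle to be exactly this inductive gluing: reconciling the genuinely different embeddings $g_n$ and $g_{n+1}$ on the overlap $A_n$ while preserving a single uniform distortion and never leaving $X$. Arranging the fresh-direction attachment and the geometric decay of the perturbations so that \emph{both} the lower and the upper bilipschitz bounds survive the infinitely many gluings is the delicate point; the ultrapower and direct-sum shortcuts fail precisely here, which is why local finiteness (to localize the interactions) must be combined with infinite-dimensionality (to supply the fresh directions).
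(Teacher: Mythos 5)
Your reductions are fine (countability, exhaustion by balls with fast-growing radii, the analysis of pairs lying in far-apart shells), and your meta-observations about why the obvious shortcuts fail (an ultraproduct limit lands only in $X_{\mathcal U}$, and direct sums of copies of $X$ need not embed into $X$) are correct. But the inductive step has a genuine gap, located exactly where you flag the ``delicate point,'' and the mechanism you offer cannot close it. At stage $n+1$ the map $f$ is already frozen on $A_n$ and was built from $g_1,\dots,g_n$ plus earlier perturbations, while the new shell is to be placed using $g_{n+1}$. For interface pairs $u\in A_{n+1}\setminus A_n$, $v\in A_n$ with $d(u,v)$ small (such pairs are unavoidable: already for $A=\mathbb{Z}$ take $v=\rho_n$, $u=\rho_n+1$), the Lipschitz upper bound forces the already-built $f|_{A_n}$ to nearly agree, near the interface, with a translate of $g_{n+1}|_{A_n}$. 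There is no reason for this: $f|_{A_n}$ and $g_{n+1}|_{A_n}$ are two unrelated distortion-$C$ embeddings of the same finite set (in $X=\ell_2$, for instance, composing $g_{n+1}$ with a rotation produces another admissible $g_{n+1}$, so you may assume no relation whatsoever), and a single translation can reconcile them at only one interface point. Concretely, for $A=\mathbb{Z}$ the new shell meets the core at the two points $\pm\rho_n$; writing your extension as $f|_W=g_{n+1}|_W+v$, the two requirements $\|g_{n+1}(\rho_n+1)+v-f(\rho_n)\|\le D'$ and $\|g_{n+1}(-\rho_n-1)+v-f(-\rho_n)\|\le D'$ would force the vector $g_{n+1}(\rho_n+1)-g_{n+1}(-\rho_n-1)$ to lie within $2D'$ of $f(\rho_n)-f(-\rho_n)$; these are two essentially arbitrary vectors of norm roughly $2\rho_n$ coming from unrelated embeddings, so their difference is in general of order $\rho_n$, not $O(1)$. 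Your fresh almost-orthogonal direction cannot repair this: almost-orthogonality only adds separation, i.e.\ it can help the \emph{lower} bound across pieces, whereas the interface failure is in the \emph{upper} bound; and since you let its scale decay geometrically, at the interface your scheme degenerates into ``attach a translate of $g_{n+1}$,'' which is exactly what fails. (A smaller issue: the Riesz-lemma step needs $\dim X=\infty$; the finite-dimensional case requires a separate, easier, pointwise-compactness argument.)

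This coherence problem --- forcing pieces that come from genuinely different finite embeddings to match along interfaces --- is the actual content of the theorem; your item (i) quietly assumes what has to be proved. Note that the present paper states the result without proof, referring to \cite{Ost12}, so the comparison is with the argument there and with its predecessors. Those proofs are organized precisely around mechanisms that create compatibility rather than postulate it: in the special case treated by Baudier--Lancien \cite{BL08} (spaces without cotype) the finite pieces are embedded by Fr\'echet-type coordinate maps $a\mapsto\bigl(d(a,x_i)-d(O,x_i)\bigr)_{x_i\in A_n}$, which are defined and $1$-Lipschitz on \emph{all} of $A$ and isometric on $A_n$, so consecutive pieces are restrictions of globally defined maps and can be glued by a radial transition inside well-placed copies of $\ell_\infty^{k}$; for general $X$ no globally defined embeddings are available, and the argument of \cite{Ost12} has to work through finite representability of ultrapowers of $X$ in $X$ together with a much more careful gluing. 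Any correct write-up must contain such a compatibility-producing step; translations plus almost-orthogonal perturbations of adversarially given $g_n$'s cannot supply it.
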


\begin{remark} This result and its version for coarse embeddings have many predecessors: Baudier \cite{Bau07,Bau12}, Baudier-Lancien
\cite{BL08}, Brown-Guentner \cite{BG05}, and Ostrovskii
\cite{Ost06,Ost09}.
\end{remark}

Now we return to embeddings of hyperbolic groups into
nonsuperreflexive spaces. Recall that we consider finitely
generated groups. It is easy to see that in this case $\cay(G,S)$
is a locally finite metric space (recall that we consider
$\cay(G,S)$ as a countable set $G$ with its word distance). By
finite determination, it suffices to show only how to embed
products of $n$ finite binary trees into an arbitrary
non-superreflexive Banach space with uniformly bounded distortions
(the distortions are allowed to grow if we increase $n$, since for
a fixed hyperbolic group the number $n$ is fixed). This can be
done using the embedding of a finite binary tree suggested by
Bourgain (Theorem \ref{T:Bourgain}) and the standard techniques
for constructions of basic sequences and finite-dimensional
decompositions. This techniques (going back to Mazur) allows to
show that for each $n$ and $N$ find a sequence of
finite-dimensional spaces $X_i$ such that $X_i$ contains a
2-bilipschitz image of $T_N$ and the direct sum $(\oplus_{i=1}^n
X_i)_1$ is $C(n)$-isomorphic to their linear span in $X$ ($C(n)$
is constant which depends on $n$, but not on $N$). See
\cite[pp.~157--158]{Ost14c} for a detailed argument.

So we have proved that each Gromov hyperbolic group admits a
bilipschitz embedding into any nonsuperreflexive Banach space.
This proves the corresponding part of the following theorem.

\begin{theorem} Let $G$ be a Gromov hyperbolic group which does
not contain a cyclic group of finite index. Then the Cayley graph
of $G$ is a test-space for superreflexivity.
\end{theorem}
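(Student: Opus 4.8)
One direction is already in hand: the discussion preceding the statement shows that \emph{every} Gromov hyperbolic group admits a bilipschitz embedding into any nonsuperreflexive Banach space, and that argument (via Theorem \ref{T:BDS07}, the embedding of products of trees, and finite determination) uses no hypothesis on cyclic subgroups. So the plan is to prove only the converse: if $\cay(G,S)$ admits a bilipschitz embedding into a Banach space $X$, then $X$ is nonsuperreflexive. The reduction I would make is to the single geometric claim that $\cay(G,S)$ contains the finite binary trees $\{T_n\}$ with a common distortion (equivalently, $T_\infty$ bilipschitzly). Granting this, a bilipschitz embedding $\cay(G,S)\to X$ restricts and composes to give embeddings of the $T_n$ into $X$ with uniformly bounded distortions, and Bourgain's Theorem \ref{T:Bourgain} then yields nonsuperreflexivity of $X$ immediately (alternatively, working with $T_\infty$, Baudier's Theorem \ref{T:Baudier}).

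The entire content of the converse therefore sits in that geometric claim, and this is where the hypotheses are used. The assumption that $G$ contains no cyclic subgroup of finite index says exactly that $G$ is \emph{non-elementary}, i.e. neither finite nor virtually cyclic; such an exclusion is unavoidable, since $\mathbb{Z}$ is itself hyperbolic and embeds isometrically into the superreflexive space $\mathbb{R}$. I would then invoke the standard fact from the theory of hyperbolic groups that a non-elementary hyperbolic group contains a \emph{quasiconvex} free subgroup $H=\langle x,y\rangle\cong F_2$: choosing two independent loxodromic elements and passing to sufficiently high powers, a ping-pong argument on the Gromov boundary produces such an $H$. Quasiconvexity makes $H$ undistorted, meaning that $|h|_H\le\lambda|h|_G$ for all $h\in H$ (the reverse inequality being automatic because the generators of $H$ are words in $S$, and any additive constant being absorbed since $|h|_G\ge 1$ for $h\ne e$). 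As word metrics are left-invariant, the inclusion $(H,d_H)\hookrightarrow(G,d_G)=\cay(G,S)$ is thus a bilipschitz embedding of vertex sets.

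It remains to place the binary trees inside $\cay(H,\{x,y\})$, which is routine because $H$ is free and its Cayley graph is a tree. The map sending a vertex $(\theta_1,\dots,\theta_k)$ of $T_\infty$ to the reduced positive word $s_{\theta_1}\cdots s_{\theta_k}\in H$, where $s_0=x$ and $s_1=y$, is an isometric embedding of $T_\infty$ into $(H,d_H)$: for two positive words the word distance equals $|w_1|+|w_2|-2\,|w_1\wedge w_2|$, which is precisely the tree distance. Composing with the bilipschitz inclusion $(H,d_H)\hookrightarrow\cay(G,S)$ and then with the assumed embedding of $\cay(G,S)$ into $X$, one obtains embeddings of $\{T_n\}$ into $X$ with a single distortion, and Theorem \ref{T:Bourgain} completes the proof. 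If one wishes to use only the weaker quasi-isometric form of the subgroup inclusion, the composite map of the locally finite space $T_\infty$ into $X$ is a quasi-isometric embedding, and Lemma \ref{L:QIBanBilip} upgrades it to a bilipschitz one before appealing to Baudier's Theorem \ref{T:Baudier}.

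The only genuinely nonroutine step is the group-theoretic input that a non-elementary hyperbolic group contains a quasiconvex copy of $F_2$; this is where both hyperbolicity and the exclusion of virtually cyclic groups are essential, and it plays, for the converse, the role that Theorem \ref{T:BDS07} plays for the direct direction. Everything downstream of it — the isometric placement of $T_\infty$ in a free group, the stability of bilipschitz embeddings under composition, and the final appeal to Bourgain's or Baudier's theorem — is elementary, so I expect the write-up to be short once the quasiconvex free subgroup is produced.
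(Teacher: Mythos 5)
Your argument is correct, and for the converse direction (embeddability of $\cay(G,S)$ into $X$ implies $X$ nonsuperreflexive) it takes a genuinely different route from the paper. The paper disposes of that direction by citing a combination of Bourgain \cite{Bou86}, Benjamini--Schramm \cite{BS97}, and basic hyperbolic group theory \cite{BH99,NY12} (details in \cite[Remark 2.5]{Ost14c}): a group as in the hypothesis is non-elementary, hence non-amenable, so its Cayley graph has positive Cheeger constant; the Benjamini--Schramm theorem then produces a subtree with positive Cheeger constant, such a tree contains the binary trees $\{T_n\}$ with uniformly bounded distortion, and Theorem \ref{T:Bourgain} finishes. You locate the trees algebraically instead: ping-pong gives a quasiconvex, hence undistorted, free subgroup $F_2$, the positive words in which form an isometric copy of $T_\infty$, and composition with the assumed embedding plus Theorem \ref{T:Bourgain} (or \ref{T:Baudier}) concludes. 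Both arguments thus reduce to the same claim---that $\cay(G,S)$ contains $\{T_n\}$ uniformly bilipschitzly---and differ only in the key input used to find the trees. Your route stays inside hyperbolic group theory and yields an explicit subgroup copy of the tree; the paper's route needs only non-amenability of $G$, so the identical argument shows that the Cayley graph of \emph{any} non-amenable finitely generated group contains the binary trees uniformly (a more robust statement, applicable far beyond hyperbolic groups), at the price of invoking \cite{BS97} as a black box. One small caveat: your optional variant through Lemma \ref{L:QIBanBilip} silently assumes $X$ is infinite-dimensional, so the main route---restricting to the finite trees $T_n$ with a common distortion and applying Theorem \ref{T:Bourgain}, which is valid for every Banach space $X$---should be kept as the actual proof. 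Your observation that the direct direction is exactly the paper's preceding discussion, with no hypothesis on cyclic subgroups, also matches the paper.
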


The other direction follows by a combination of results of
Bourgain \cite{Bou86}, Benjamini and Schramm \cite{BS97}, and some
basic theory of hyperbolic groups \cite{BH99,NY12}, see
\cite[Remark 2.5]{Ost14c} for details.\medskip

I find the following open  problem interesting.

\begin{problem}\label{P:GroupsSR} Characterize finitely generated infinite groups whose
Cayley graphs are test-spaces for superreflexivity.
\end{problem}

Possibly Problem \ref{P:GroupsSR} is very far from its solution
and we should rather do the following. Given a group whose
structure is reasonably well understood, check\smallskip

(1) Whether it admits a bilipschitz embedding into an arbitrary
nonsuperreflexive Banach space?

(2) Whether it admits bilipschitz embeddings into some
superreflexive Banach spaces?

\begin{remark} There are groups which do not admit bilipschitz embeddings into
some nonsuperreflexive spaces, such as $L_1$. Examples which I
know:

\begin{itemize}

\item Heisenberg group (Cheeger-Kleiner \cite{CK10})

\item Gromov's random groups \cite{Gro03} containing expanders
weakly are not even coarsely embeddable into $L_1$.

\item Recently constructed groups of Osajda \cite{Osa14+} with
even stronger properties.
\end{itemize}
\end{remark}

\begin{remark} At the moment the only groups known to admit bilipschitz
embeddings into superreflexive spaces are groups containing
$\mathbb{Z}^n$ as a subgroup of finite index.
de~Cornuilier-Tessera-Valette \cite{CTV07} conjectured that such
groups are the only groups admitting a bilipschitz embedding into
$\ell_2$. This conjecture is still open. I asked about the
superreflexive version of this conjecture on MathOverflow
\cite{Ost14f} (August 19, 2014) and de~Cornuilier commented on it
as: ``In the main two cases for which the conjecture is known to
hold in the Hilbert case, the same argument also works for
arbitrary uniformly convex Banach spaces''. \end{remark}

\begin{remark}
Groups which are test-spaces for superreflexivity do not have to
be hyperbolic. In fact, one can show that a direct product of
finitely many hyperbolic groups is a test-space for
superreflexivity provided at least one of them does not have a
cyclic group as a subgroup of finite index. It is easy to check
(using the definition) that such products are not Gromov
hyperbolic unless all-except-one groups in the product are finite
(the reason is that $\mathbb{Z}^2$ is not Gromov hyperbolic).
\end{remark}

Now I would like to return to the title of this section:
``Characterization of superreflexivity in terms of one
test-space''. This can actually be done using either the Bourgain
or the Johnson-Schechtman characterization and the following
elementary proposition (I published it \cite{Ost14c}, but I am
sure that it was known to interested people):

\begin{proposition}[{\cite[Section 5]{Ost14c}}] {\bf (a)} Let $\{S_n\}_{n=1}^\infty$ be a sequence of finite test-spaces for some class $\mathcal{P}$ of Banach
spaces containing all finite-dimensional Banach spaces. Then there
is a metric space $S$ which is a test-space for $\mathcal{P}$.
\medskip

{\bf (b)} If $\{S_n\}_{n=1}^\infty$ are

\begin{itemize}

\item unweighted graphs,

\item trees,

\item graphs with uniformly bounded degrees,

\end{itemize}
then $S$ also can be required to have the same property.
\end{proposition}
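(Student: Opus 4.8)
The plan is to realise $S$ as a single connected ``spine of blobs''. Fix a base point $o_n\in S_n$, arrange the base points along a ray by joining $o_n$ to $o_{n+1}$ with a path of length $L_n$, and let $S_n$ hang off this spine at $o_n$; here $L_n\in\mathbb{N}$ is chosen to grow so fast that $L_n/(\diam S_n+\diam S_{n+1})\to\infty$. Endow $S$ with the shortest-path metric. Since each $S_n$ is attached at the single vertex $o_n$, $S$ is a tree whenever the $S_n$ are trees, its maximal degree exceeds $\sup_n\Delta(S_n)$ by at most $2$, and all edges keep length $1$ when the $S_n$ are unweighted and the connecting paths are subdivided into unit edges; this gives part {\bf (b)}. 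Because $t_n:=\sum_{k<n}L_k\to\infty$, every ball of finite radius meets only finitely many $S_n$ and finitely many spine vertices, so $S$ is \emph{locally finite}; and each $S_n$ sits isometrically in $S$, as any detour off $S_n$ must leave and re-enter through $o_n$ and hence costs extra length.

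The easy direction is immediate: if $S$ embeds into $X$ with distortion $D$, then, $S_n$ being isometric to a subset of $S$, each $S_n$ embeds into $X$ with distortion at most $D$; these are uniformly bounded, so the test-space property of $\{S_n\}$ forces $X\notin\mathcal{P}$.

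For the converse I would show $X\notin\mathcal{P}\Rightarrow S\hookrightarrow X$. As $\mathcal{P}$ contains all finite-dimensional spaces, $X$ is infinite-dimensional, and since $\{S_n\}$ are test-spaces there is a single $C=C(X)$ with embeddings $f_n\colon S_n\to X$ of distortion $\le C$, normalised so that $d_{S_n}(u,v)\le\|f_n(u)-f_n(v)\|\le C\,d_{S_n}(u,v)$ and $f_n(o_n)=0$. I would then build the embedding inside a Schauder finite-dimensional decomposition. Fix a norm-one $e\in X$; using Mazur's successive-extraction technique, arrange that $\{[e],H_1,H_2,\dots\}$ is an FDD with constant $\le 2$, where $H_n$ is the span of a copy of $f_n(S_n)$ placed inside the appropriate finite-codimensional tail subspace. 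Define $F|_{S_n}=t_ne+f_n(\cdot)$ and send each connecting path isometrically onto the segment from $t_ne$ to $t_{n+1}e$ inside $[e]$. For $x\in S_n$, $y\in S_m$ with $n<m$ the vectors $(t_n-t_m)e$, $f_n(x)$, $-f_m(y)$ lie in distinct FDD summands, so the FDD estimate gives $\|F(x)-F(y)\|\ge\tfrac14\max\{|t_m-t_n|,\|f_n(x)\|,\|f_m(y)\|\}\ge\tfrac1{12}d_S(x,y)$ (using $\|f_n(x)\|\ge d_{S_n}(x,o_n)$), while the triangle inequality yields $\|F(x)-F(y)\|\le C\,d_S(x,y)$; within one block $F$ has distortion $\le C$. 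Thus $F$ embeds all of $S$ with distortion $O(C)$. Alternatively, one may avoid the infinite FDD: the same finite-block argument shows every finite subset of $S$ embeds with distortion $O(C)$, whence $S\hookrightarrow X$ by the finite-determination Theorem~\ref{T:bilip}, since $S$ is locally finite.

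The step carrying all the weight—and the one I expect to be the main obstacle—is placing a copy of $f_n(S_n)$ inside the prescribed finite-codimensional tail subspace $Y_n$ \emph{without losing control of the distortion uniformly in $n$}. This rests on the standard fact that a Banach space is almost-isometrically finitely representable in each of its finite-codimensional subspaces: the image $f_n(S_n)$ spans a finite-dimensional $F_n\subseteq X$, which therefore admits a $(1+\ep)$-isomorphism $T_n$ onto a subspace of $Y_n$, and $T_n\circ f_n$ embeds $S_n$ into $Y_n$ with distortion $\le(1+\ep)C$, uniformly in $n$ and independently of the codimension. Verifying this finite-representability fact is the crux; the subtlety is that one cannot merely project $f_n(S_n)$ into $Y_n$, since a unit vector can sit at distance comparable to $1$ from a finite-codimensional subspace, so a genuinely new almost-isometric copy of $F_n$ must be produced inside $Y_n$.
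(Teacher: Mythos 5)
Your construction of $S$ (blobs strung along a spine, each attached at one base point, with long connecting paths), the verification of (b), and the easy direction all match the paper's proof. The gap is exactly at the step you flag as the crux, and the fact you invoke to close it is false. It is \emph{not} true that a Banach space is almost-isometrically finitely representable in its finite-codimensional subspaces: in $X=\mathbb{R}\oplus_\infty\ell_2$ the span of $(1,e_1)$ and $(1,-e_1)$ is isometric to $\ell_\infty^2$, while every two-dimensional subspace of the hyperplane $\{0\}\oplus\ell_2$ is isometric to $\ell_2^2$, so no $(1+\varepsilon)$-isomorphism $T_n$ exists (the optimal constant is $\sqrt2$). What is true is a \emph{universal-constant} version for hyperplanes; but your scheme needs to place $f_n(S_n)$ inside tail subspaces $Y_n$ whose codimension grows with $n$, and there no constant independent of the codimension exists: in $\ell_\infty^{2k}\oplus_\infty\ell_2$ the subspace $\ell_\infty^{2k}\oplus\{0\}$ embeds into the codimension-$2k$ subspace $\{0\}\oplus\ell_2$ only with distortion at least $\sqrt{2k}$. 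You also cannot instead re-embed the metric space $S_n$ (rather than its span) into $Y_n$ by appealing to the test-space property of $\{S_n\}$, because the Proposition assumes nothing about $\mathcal{P}$ beyond containing all finite-dimensional spaces, so $Y_n\notin\mathcal{P}$ need not hold, and even when it does the uniform bound $C(Y_n)$ may depend on $n$. Your fallback through Theorem \ref{T:bilip} inherits the same defect: embedding a finite subset of $S$ meeting $S_1,\dots,S_N$ by your finite-block argument still requires placements in tails of codimension growing with $N$, with a constant uniform in $N$.

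The paper's proof keeps your space $S$ but replaces the FDD by a codimension-one device that makes the problem disappear. Fix a norm-one functional $x^*$ and a vector $x_0$ with $x^*(x_0)=1$, and set $H=\ker x^*$. Each span $F_n=\mathrm{span}\,f_n(S_n)$ is a finite-dimensional subspace of the infinite-dimensional space $X$, hence lies in some hyperplane $\ker y_n^*$; since any two hyperplanes of a Banach space are isomorphic with Banach--Mazur distance at most a universal constant $K$ (a short Riesz-lemma argument), every $S_n$ embeds into the \emph{single} hyperplane $H$ with distortion at most $KC$. One then maps $S_n$ into the affine translate $H+t_nx_0$ and the connecting paths onto segments in the direction $x_0$. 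No independence whatsoever between the blocks is needed: for $p\in S_n$, $q\in S_m$, $n<m$, the lower bound comes solely from $\|F(p)-F(q)\|\ge|x^*(F(p)-F(q))|=t_m-t_n$, which dominates $\frac13 d_S(p,q)$ because the path lengths were chosen $\ge\max\{\diam S_n,\diam S_{n+1}\}$; the upper bound is the triangle inequality. Since the codimension involved never exceeds one, the obstruction you correctly identified never arises.
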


\begin{proof}[Sketch of the proof] In all of the cases the constructed space $S$ contains subspaces
isometric to each of $\{S_n\}_{n=1}^\infty$. Therefore the only
implication which is nontrivial is that the embeddability of
$\{S_n\}_{n=1}^\infty$ implies the embeddability of $S$.
\medskip

Each finite metric space can be considered as a weighted graph
with its shortest path distance. We construct the space $S$ as an
infinite graph by joining $S_n$ with $S_{n+1}$ with a path $P_n$
whose length is $\ge\max\{\diam S_n, \diam S_{n+1}\}$. To be more
specific, we pick in each $S_n$ a vertex $O_n$ and let $P_n$ be a
path joining $O_n$ with $O_{n+1}$. We endow the infinite graph $S$
with its shortest path distance. It is clear that
$\{S_n\}_{n=1}^\infty$ embed isometrically into $S$ and all of the
conditions in (b) are satisfied. It remains only to show that each
infinite-dimensional Banach space $X$ which admits bilipschitz
embeddings of $\{S_n\}_{n=1}^\infty$ with uniformly bounded
distortions, admits a bilipschitz embedding of $S$. This is done
by embedding $S_n$ into any hyperplane of $X$ with uniformly
bounded distortions. This is possible because the sets are finite,
the space is infinite-dimensional, and all hyperplanes in a Banach
space are isomorphic with the Banach-Mazur distances being $\le $
some universal constant.
\medskip

Now we consider in $X$ parallel hyperplanes $\{H_n\}$ with the
distance between $H_n$ and $H_{n+1}$ equal to the length of $P_n$
and embed everything in the corresponding way. All computations
are straightforward (see \cite{Ost14c} for details).\end{proof}

\section{Non-local properties}

One can try to find metric characterizations of classes of Banach
spaces which are not {\it local} in the sense that the conditions
(1) $X\in\mathcal{P}$ and (2) $Y$ is finitely representable in
$X$, do not necessarily imply that $Y\in\mathcal{P}$. Apparently
this study should not be considered as a part of the Ribe program,
and this direction has developed much more slowly than the
directions related to the Ribe program. It is clear that even if
we restrict our attention to properties which are hereditary
(inherited by closed subspaces) and isomorphic invariant, the
class of non-local properties which have been already studied in
the literature is huge. I found in the literature only four
properties for which the problem of metric characterization was
ever considered.\medskip

\subsection{Asymptotic uniform convexity and smoothness}

One of the first results of the described type is the following
result of Baudier-Kalton-Lancien, where by $T_\infty^\infty$ we
denote the tree defined similarly to the tree $T_\infty$, but now
we consider all possible finite sequences with terms in
$\mathbb{N}$, and so degrees of all vertices of $T_\infty^\infty$
are infinite.

\begin{theorem}[\cite{BKL10}]\label{T:BKL10} Let $X$ be a reflexive Banach space. The following assertions
are equivalent:
\begin{itemize}

\item ${T}_\infty^\infty$ admits a bilipschitz embedding into $X$.

\item $X$ does not admit any equivalent asymptotically uniformly
smooth norm \underline{or} $X$ does not admit any equivalent
asymptotically uniformly convex norm.

\item The Szlenk index of $X$ is $>\omega$ or the Szlenk index of
$X^*$ is $>\omega$, where $\omega$ is the first infinite ordinal.
\end{itemize}
\end{theorem}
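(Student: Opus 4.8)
The plan is to split the three assertions into a purely linear equivalence (2)$\Leftrightarrow$(3) and the genuinely metric equivalence (1)$\Leftrightarrow$(2), with the Szlenk index as the common bookkeeping device, using reflexivity of $X$ throughout (it supplies weak compactness of bounded sets and the identification $X^{**}=X$).

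First I would dispose of (2)$\Leftrightarrow$(3) using only known linear theory. The key input is the theorem of Knaust--Odell--Schlumprecht: a separable Banach space $Y$ admits an equivalent asymptotically uniformly smooth (AUS) norm if and only if $\operatorname{Sz}(Y)=\omega$ (recall $\operatorname{Sz}(Y)\ge\omega$ for infinite-dimensional $Y$, so this reads ``$\operatorname{Sz}(Y)\le\omega$''). Hence ``$X$ has no equivalent AUS norm'' is literally ``$\operatorname{Sz}(X)>\omega$''. Since $X$ is reflexive, a renorming of $X$ is asymptotically uniformly convex (AUC) exactly when the dual norm on $X^*$ is AUS (the asymptotic moduli are in Legendre-type duality, and weak$^*$ coincides with weak here); applying Knaust--Odell--Schlumprecht to $X^*$ gives ``$X$ has no equivalent AUC norm'' iff ``$\operatorname{Sz}(X^*)>\omega$''. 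Taking the disjunction of the two statements yields (2)$\Leftrightarrow$(3) verbatim.

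The substance is (1)$\Leftrightarrow$(2), and I would prove the easier implication (2)$\Rightarrow$(1) first, working through (3). Suppose $\operatorname{Sz}(X)>\omega$ (the case $\operatorname{Sz}(X^*)>\omega$ is identical after passing to the dual and transporting the embedding back by reflexivity). Failure of the Szlenk derivation to stabilize at level $\omega$ produces, for each $n$ and each $\varepsilon>0$, weak$^*$-separated trees of depth $n$ in $B_{X^*}$ surviving $n$ derivations; a James-type biorthogonal extraction converts these into configurations inside $X$ that realize the graph metric of an arbitrarily-branching depth-$n$ tree up to a fixed distortion. Assembling the levels coherently, and thinning to keep the branching infinite, produces a bi-Lipschitz copy of the full tree $T_\infty^\infty$. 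This is the analogue of the ``easy'' (structural) direction in Bourgain's binary-tree theorem.

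The main obstacle is the reverse implication (1)$\Rightarrow$(2), which I would prove in contrapositive form, paralleling the Kloeckner/Bourgain self-improvement scheme: assuming $X$ carries \emph{both} an equivalent AUS norm and an equivalent AUC norm, show $T_\infty^\infty$ cannot embed. Fix comparable AUS and AUC norms with moduli $\overline{\rho}_X(t)=o(t)$ and $\overline{\delta}_X(t)>0$. Given a distance non-decreasing $D$-Lipschitz $F:T_\infty^\infty\to X$, I would use the infinite branching at every vertex to run a Ramsey/weak-compactness stabilization: pass to a full infinitely-branching subtree on which, at each vertex $t$, the child-increments $F(t_i)-F(t)$ form an (approximately) weakly null array. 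On such a subtree the AUS modulus bounds the growth of $\|F(\mathrm{root})-F(t)\|$ along a branch from above, while the AUC modulus forces the two outgoing sub-branches at each split to separate from below; comparing both one-sided estimates against the tree metric (a root-to-depth-$n$ path has length $n$, and two depth-$n$ vertices splitting at depth $k$ lie at distance $2(n-k)$) shows, by the same per-level gain argument as in Kloeckner's Lemma~\ref{L:GrandChi}, that the distortion of the depth-$n$ subtree grows without bound in $n$, so no single $D$ can serve for all of $T_\infty^\infty$. The hard part is to make the stabilization genuinely produce a weakly null branch-increment structure on a \emph{still infinitely branching} subtree, uniformly in the depth, and then to combine the one-sided AUS upper bound with the AUC lower bound into an honest quantitative gain at each level; this is precisely where both moduli (hence the conjunction in the negation of (2)) are indispensable, since a one-sided estimate alone does not obstruct the embedding.
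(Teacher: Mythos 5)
You should first note that this survey states Theorem \ref{T:BKL10} without proof (it is quoted from \cite{BKL10}), so your proposal can only be judged against the Baudier--Kalton--Lancien argument itself; your overall architecture (Knaust--Odell--Schlumprecht plus duality for the equivalence of the second and third items, Szlenk-derived trees for the embedding direction, weak-compactness stabilization with both asymptotic moduli for non-embeddability) does match theirs in outline, but two of your steps have genuine gaps. In the direction (3)$\Rightarrow$(1), the sentence ``the case ${\rm Sz}(X^*)>\omega$ is identical after passing to the dual and transporting the embedding back by reflexivity'' is not a proof step: reflexivity identifies $X^{**}$ with $X$ but gives no way to transport a bilipschitz embedding of $T_\infty^\infty$ from $X^*$ into $X$. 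Indeed, the statement ``$T_\infty^\infty$ embeds into $X^*$ iff it embeds into $X$'' is exactly the symmetry of the theorem being proved, so invoking it is circular. The two cases require genuinely different constructions: when ${\rm Sz}(X)>\omega$ the $\ell_1^+$-type weakly null trees live in $X$ and a partial-sum map lands in $X$; when ${\rm Sz}(X^*)>\omega$ those trees live in $X^*$, while the derivation trees that do live in $B_X$ (via $B_{X^{**}}=B_X$) are norm-bounded with weakly null increments, so partial sums of their increments stay bounded and cannot be bilipschitz on the unbounded space $T_\infty^\infty$; one must instead build the embedding from vectors of $X$ norming the branches of the dual tree (a transposed James-type system). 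A concrete test: $X$ equal to the original Tsirelson space (reflexive, asymptotically $c_0$, ${\rm Sz}(X)=\omega$, ${\rm Sz}(X^*)>\omega$), where every weakly-null-increment construction fails although the theorem asserts $T_\infty^\infty$ embeds. Relatedly, ``assembling the levels coherently'' from depth-$n$ trees is unjustified: $T_\infty^\infty$ is not locally finite, so finite determination (Theorem \ref{T:bilip}) does not apply, and in \cite{BKL10} the infinite-depth structure comes from weak$^*$ compactness of the Szlenk derivation sets, not from gluing finite trees.

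In the direction (1)$\Rightarrow$(2), the appeal to ``the same per-level gain argument as in Kloeckner's Lemma \ref{L:GrandChi}'' cannot work as stated. Consider $X=\left(\sum_n\oplus\,\ell_1^n\right)_{\ell_2}$: it is reflexive and both AUS and AUC of power type $2$, yet it is not superreflexive, so by Theorem \ref{T:Bourgain} all finite binary trees embed into it with uniformly bounded distortion. Hence no fork/self-improvement argument whose inputs are pairs of children (which is all Kloeckner's lemma uses) can yield a distortion lower bound in this setting; the gain must come from infinite families of children through their weak limits, because asymptotic moduli only register weakly null perturbations. Two of your specific claims also fail: the child-increments $F(t_i)-F(t)$ cannot in general be made weakly null, since the weak limit of the images of the children of $t$ need not be $F(t)$ (it is an auxiliary point $w_t$, and replacing $F(t)$ by $w_t$ creates a new map whose bilipschitz constants must be re-established -- this replacement is the actual content of the self-improvement); and the AUS modulus does not bound $\|F(\mathrm{root})-F(t)\|$ along a branch, because increments along a branch need not be weakly null (in the Tsirelson example they necessarily carry a common drift certified by dual functionals). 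You correctly sense that both moduli and the infinite branching are indispensable, and you flag the stabilization as ``the hard part,'' but that hard part, together with the dual-case construction above, is precisely the proof; as written, the quantitative core of both nontrivial directions is missing.
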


It is worth mentioning that Dilworth, Kutzarova, Lancien, and
Randrianarivony \cite{DKLR14} found an interesting geometric
description of the class of Banach spaces whose metric
characterization is provided by Theorem \ref{T:BKL10}.

\subsection{Radon-Nikod\'ym property}\label{S:RNP}

The {\it Radon-Nikod\'ym property} (\RNP) is one of the most
important isomorphic invariants of Banach spaces. This class also
plays an important role in the theory of metric embeddings, this
role is partially explained by the fact that for this class one
can use differentiability to prove non-embeddability
results.\medskip

There are many expository works presenting results on the \RNP, we
recommend the readers (depending on the taste and purpose) one of
the following sources \cite[Chapter 5]{BL00}, \cite{Bou79},
\cite{Bou83}, \cite{DU77}, \cite{Dul85}, \cite{Pis14+}.

\subsubsection{Equivalent definitions of \RNP}

One of the reasons for the importance of the \RNP\ is the
possibility to characterize (define) the \RNP\ in many different
ways. I would like to remind some of them:

\begin{itemize}

\item  Measure-theoretic definition (it gives the name to this
property) $X\in\RNP\Leftrightarrow$ The following analogue of the
Radon-Nikod\'ym theorem holds for $X$-valued measures.

\begin{itemize}

\item Let $(\Omega,\Sigma,\mu)$ be a positive finite real-valued
measure, and $(\Omega,\Sigma,\tau)$ be an $X$-valued measure on
the same $\sigma$-algebra which is absolutely continuous with
respect to $\mu$ (this means $\mu(A)=0\Rightarrow \tau(A)=0$) and
satisfies the condition $\tau(A)/\mu(A)$ is a uniformly bounded
set of vectors over all $A\in\Sigma$ with $\mu(A)\ne0$. Then there
is an $f\in L_1(\mu,X)$ such that
$$\forall A\in\Sigma\quad\tau(A)=\int_A f(\omega)d\mu(\omega).$$

\end{itemize}

\item Definition in terms of differentiability (goes back to
Clarkson \cite{Cla36} and Gel\-fand \cite{Gel38})
$X\in\RNP\Leftrightarrow$ $X$-valued Lipschitz functions on
$\mathbb{R}$ are differentiable almost everywhere.

\item Probabilistic definition (Chatterji \cite{Cha68})
$X\in\RNP\Leftrightarrow$ Bounded $X$-valued martingales converge.

\begin{itemize}

\item In more detail:  A Banach space $X$ has the RNP if and only
if each $X$-valued martingale $\{f_n\}$ on some probability space
$(\Omega,\Sigma,\mu)$, for which $\{||f_n(\omega)||:~
n\in\mathbb{N},~ \omega\in\Omega\}$ is a bounded set, converges in
$L_1(\Omega,\Sigma, \mu, X)$.

\end{itemize}

\item Geometric definition. $X\in\RNP\Leftrightarrow$ Each bounded
closed convex set in $X$ is dentable in the following sense:

\begin{itemize}

\item  A bounded closed convex subset $C$ in a Banach space $X$ is
called {\it dentable} if for each $\ep>0$ there is a continuous
linear functional $f$ on $X$ and $\alpha>0$ such that the set
\[\left\{y\in C:~f(y)\ge\sup\{f(x):~ x\in C\}-\alpha\right\}\]
has diameter $<\ep$.

\end{itemize}

\item {\bf Examples {\rm (these lists are far from being
exhaustive)}:}

\begin{itemize}

\item \RNP: Reflexive (for example $L_p$, $1<p<\infty$), separable
dual spaces (for example, $\ell_1$).

\item non-\RNP: $c_0$, $L_1(0,1)$, nonseparable duals of separable
Banach spaces.

\end{itemize}

\end{itemize}

\subsubsection{RNP and metric embeddings}

Cheeger-Kleiner \cite{CK06} and Lee-Naor \cite{LN06} noticed that
the observation of Sem\-mes \cite{Sem96} on the result of Pansu
\cite{Pan89} can be generalized to maps of the Heisenberg group
into Banach spaces with the RNP. This implies that Heisenberg
group with its subriemannian metric (see Definition
\ref{D:Heisenberg}) does not admit a bilipschitz embedding into
any space with the RNP.

Cheeger-Kleiner \cite{CK09} generalized some part of
differentiability theory of Chee\-ger \cite{Che99} (see also
\cite{Kei04,KM11}) to maps of metric spaces into Banach spaces
with the RNP. This theory implies some non-embeddability results,
for example it implies that the Laakso space does not admit a
bilipschitz embedding into a Banach space with the \RNP.
\medskip

\subsubsection{Metric characterization of \RNP}

In 2009 Johnson \cite[Problem 1.1]{Tex09} suggested the problem:
Find a purely metric characterization of the Radon-Nikod\'ym
property (that is, find a characterization of the \RNP\ which does
not refer to the linear structure of the space). The main goal of
the rest of Section \ref{S:RNP} is to present such
characterization.
\medskip

It turns out that the \RNP\ can be characterized in terms of {\it
thick families of geodesics} defined as follows (different
versions of this definition appeared in
\cite{Ost14a,Ost14b,Ost14d}, the following seems to be the most
suitable definition).

\begin{definition}[\cite{Ost14a,Ost14b}]\label{D:ThickFam} A family $T$ of $uv$-geodesics is called
{\it thick} if there is $\alpha>0$ such that for every $g\in T$
and for every finite collection of points $r_1,\dots,r_n$ in the
image of $g$, there is another $uv$-geodesic $\widetilde g\in T$
satisfying the conditions:

\begin{itemize}

\item[(i)] The image of $\widetilde g$ also contains
$r_1,\dots,r_n$ (we call these points {\it control points}).

\item[(ii)] Possibly there are some more common points of $g$ and
$\widetilde g$.

\item[(iii)]  There is a sequence $0=q_0< s_1< q_1< s_2<
q_2<\dots< s_m<q_m=d_M(u,v)$, such that $g(q_i)=\widetilde g(q_i)$
($i=0,\dots, m$) are common points containing $r_1,\dots,r_n$; and
$\sum_{i=1}^{m}d_M(g(s_i),\widetilde g(s_i))\ge\alpha.$

\item[(iv)] Furthermore, each geodesic which on some intervals
between the points $0=q_0<q_1<q_2<\dots<q_m=d_M(u,v)$ coincides
with $g$ and on others with $\widetilde g$ is also in $T$.

\end{itemize}

\end{definition}

\begin{example} Interesting and important examples of spaces
having thick families of geodesics are the infinite diamond
$D_\omega$ and the Laakso space $L_\omega$, but now we consider
them not as unions of finite sets, but as unions of geodesic
metric spaces obtained from weighted  $\{D_n\}_{n=0}^\infty$ and
$\{L_n\}_{n=0}^\infty$ in which edges are identified with line
segments of lengths $\{2^{-n}\}_{n=0}^\infty$ and
$\{4^{-n}\}_{n=0}^\infty$, respectively. Observe that for such
graphs there are also natural (although non-unique) isometric
embeddings of $D_n$ into $D_{n+1}$ and $L_n$ into $L_{n+1}$, and
therefore the unions are well-defined. It is easy to check that
the families of all geodesics in $D_\omega$ and $L_\omega$ joining
the vertices of $D_0$ and $L_0$, respectively, are thick.
\end{example}

\begin{theorem}[\cite{Ost14a}]\label{T:ThickFam} A Banach space $X$ does not have the
RNP if and only if  there exists a metric space $M_X$ containing a
thick family $T_X$ of geodesics which admits a bilipschitz
embedding into $X$.
\end{theorem}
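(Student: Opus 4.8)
The plan is to establish the two implications of the equivalence separately, pivoting on the differentiability and martingale characterizations of the \RNP\ recalled above. For the implication \textbf{(thick family $\Rightarrow$ $X\notin\RNP$)} I would argue by contraposition. Suppose $X\in\RNP$ and that $\Phi:M_X\to X$ is a bilipschitz embedding, normalized so that $d_{M_X}(x,y)\le\|\Phi(x)-\Phi(y)\|\le D\,d_{M_X}(x,y)$, with $T_X$ thick of constant $\alpha$. Each geodesic $g\in T_X$ gives a Lipschitz curve $\gamma_g=\Phi\circ g:[0,d_{M_X}(u,v)]\to X$, which by the differentiability characterization of the \RNP\ (and its metric-space extension of Cheeger--Kleiner \cite{CK09}) is differentiable almost everywhere. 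At a point $t_0$ of differentiability the curve $\gamma_g$ is affine up to $o(r)$ on the scale-$r$ window around $t_0$. I would then feed thickness with control points placed at $t_0$ and at the endpoints of such a window: the resulting $\widetilde g\in T_X$ shares $\gamma_g$'s values at these anchors yet, by clause (iii), deviates from $g$ by a total of at least $\alpha$ in between. Transporting this metric deviation to $X$ via the lower Lipschitz bound and comparing with the $o(r)$ affine approximation at \emph{every} small scale around a full-measure set of differentiability points produces a contradiction, since the thickness constant $\alpha$ is scale-invariant whereas differentiability forces the deviation to be $o(\text{scale})$. Hence $X\notin\RNP$.

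For the reverse implication \textbf{($X\notin\RNP\Rightarrow$ thick family)} I would construct $M_X$ and the embedding by hand. Invoking the martingale characterization of the \RNP, its failure supplies $\delta>0$ and a bounded, tree-indexed family $\{x_\tau\}\subset X$ in which each $x_\tau$ is the average of its two successors $x_{\tau0},x_{\tau1}$ and $\|x_\tau-x_{\tau i}\|\ge\delta$ (a bounded divergent dyadic martingale, equivalently a bounded $\delta$-tree in the sense of Definition \ref{D:DelTrees}). I would then model $M_X$ on the infinite diamond $D_\omega$ of Definition \ref{D:Diamonds}, but endow it with a geometry read off from these martingale differences: an edge already realized as a segment $[\Phi(p),\Phi(p')]$ with increment $\Phi(p')-\Phi(p)=d$ is subdivided by inserting two equator vertices whose images are $\tfrac12(\Phi(p)+\Phi(p'))\pm e$, where $e$ is a fresh difference $x_{\sigma0}-x_\sigma$ of norm $\ge\delta$ drawn from deeper in the tree. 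The two equator vertices are thus symmetric transverse displacements of the midpoint of the parent segment, so the map sending the abstract diamond vertices to these partial sums of martingale differences, and each edge to the corresponding segment of $X$, is a candidate bilipschitz embedding, and the family of geodesics joining the two poles of $D_\omega$ is the candidate thick family $T_X$. The four clauses of Definition \ref{D:ThickFam} are then transparent in principle: the two ways through each inserted pair of equator vertices are the local branchings of the geodesics, clause (iv) (free splicing) holds because the subdivision is performed independently edge-by-edge, and the perpetual availability of differences of norm $\ge\delta$---that is, the \emph{divergence} of the martingale---forces a deviation of definite size at every branching, yielding a uniform $\alpha$ comparable to $\delta$; the upper Lipschitz bound is immediate from the triangle inequality and the boundedness of $\{x_\tau\}$.

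I expect the constructive reverse implication to be the main obstacle, and within it the global \emph{lower} bilipschitz estimate. Local control of each subdivision does not by itself prevent partial sums of the chosen martingale differences along two different pole-to-pole routes from nearly cancelling, so one must organize the differences carefully---for instance first reducing an arbitrary bounded divergent martingale to a balanced dyadic $\delta$-tree and arranging the norms $\|e\|$ to decay geometrically across the levels of the diamond---so that distinct vertices of $M_X$ stay uniformly separated in $X$ and the shortest-path metric of the weighted diamond is genuinely comparable to the $X$-distance of the images. The second point requiring care is to certify clause (iii) with a \emph{single} $\alpha$, uniformly over the chosen geodesic and over arbitrarily fine control-point sets; here the self-similarity of $D_\omega$ and the geometric decay of the $\|e\|$ should make $\alpha$ computable, the deviation contributed by the coarsest free sub-diamond already accounting for a fixed fraction of $\delta$. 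By comparison, the forward implication is conceptually settled once the differentiation theorem is invoked, its only delicacy being the uniform localization of the thickness deviation to small scales.
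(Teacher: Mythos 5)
Your implication ``$X\notin$ RNP $\Rightarrow$ thick family'' starts from a false premise: failure of the RNP does \emph{not} supply a bounded $\delta$-tree in the sense of Definition \ref{D:DelTrees}. What is actually equivalent to non-RNP is the existence of a bounded $\delta$-\emph{bush} (Definition \ref{D:DelBush}, Theorem \ref{T:BushRNP}), in which each element is a convex combination, with arbitrary weights, of arbitrarily (finitely) many successors --- not the midpoint of exactly two. The existence of a bounded $\delta$-tree is the infinite tree property, which is strictly stronger: Bourgain and Rosenthal \cite{BR80} constructed a space without the RNP containing no bounded $\delta$-tree, as the paper itself recalls. Consequently your diamond construction, whose equator vertices are the symmetric displacements $\pm e$ of midpoints by dyadic martingale differences, can only be run in spaces with the infinite tree property (for instance in dual spaces, where the two notions do coincide); it cannot prove the theorem for general $X$. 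The paper's construction is organized around a bush: after renorming $X$ so that all bush elements become unit vectors, the geodesics are obtained as limits of broken lines read off from the convex-combination identities $z_{n-1,k}=\sum_{j\in A^n_k}\lambda_{n,j}z_{n,j}$, with auxiliary midpoints $y_{n,j}=\frac12(x_{n-1,k}+x_{n,j})$ and binary labels recording only the \emph{order} of traversal. Moreover --- a point your plan misses --- these geodesics are built inside $X$ itself, so the global lower bilipschitz estimate that you correctly flag as the ``main obstacle'' of your construction never has to be proved at all.

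Your contrapositive direction also has a gap. Clause (iii) of Definition \ref{D:ThickFam} provides a \emph{sum} of deviations $\sum_{i}d_M(g(s_i),\widetilde g(s_i))\ge\alpha$ distributed over the \emph{entire} geodesic, between common points $q_i$ that need not be anywhere near your control points $t_0-r,t_0,t_0+r$; nothing forces any definite portion of this deviation into the window where the $o(r)$ affine approximation holds, so the claimed scale-invariance contradiction does not materialize. Even a deviation localized inside the window would not contradict differentiability of $\gamma_g$: the deviating points lie on a \emph{different} geodesic $\widetilde g$, and once the distortion $D/\ell$ exceeds $1$, the triangle inequality allows $\Phi(\widetilde g(s))$ to sit at distance comparable to $r$ times the distortion slack from the affine interpolation of $\gamma_g$ --- differentiation of one curve constrains nothing about its neighbors through the same anchors. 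Invoking Cheeger--Kleiner \cite{CK09} does not repair this: that differentiation theory requires metric-measure structure (a doubling measure, a Poincar\'e inequality) which an abstract $M_X$ carrying a thick family need not possess. The paper's proof replaces differentiation by what the RNP literally provides, Chatterji's martingale convergence theorem \cite{Cha68}: it builds difference-quotient functions $M_k$ on $(0,1]$ from nested point-sequences on geodesics of the family, uses clause (iv) to guarantee that each selected sequence again lies on a geodesic of the family so the process can be iterated, and at each even step chooses between $z_i$ and $\widetilde z_i$ --- whichever makes the two new difference quotients more separated --- to force $\|M_{2k}-M_{2k-1}\|_{L_1(X)}\ge\frac14\ell\alpha$. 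The deviations are thus harvested ``on average'', in $L_1$, across the whole interval and across different members of the family, which is precisely what a pointwise differentiation argument along a single curve cannot do.
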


\begin{remark} Theorem \ref{T:ThickFam} implies the result of
Cheeger and Kleiner \cite{CK09} on nonexistence of bilipschitz
embeddings of the Laakso space into Banach spaces with the \RNP.
\end{remark}

It turns out that the metric space $M_X$ whose existence is
established in Theorem \ref{T:ThickFam} cannot be chosen
independently of $X$, because the following result holds.

\begin{theorem}[\cite{Ost14a}]\label{T:Customize} For each metric space
$M$ containing a thick family of geodesics there exists a Banach
space $X$ which does not have the \RNP\ and does not admit a
bilipschitz embedding of $M$.
\end{theorem}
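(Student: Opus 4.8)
The plan is to build the blocking space $X$ by hand, tailored to $M$, splitting the argument by the density character of $M$. The non-separable case is easy: take $X=c_0$. Indeed $c_0$ fails the \RNP\ (it appears among the non-\RNP\ examples above), while a bilipschitz embedding is a homeomorphism onto its image and so cannot decrease the density character; since $\mathrm{dens}(c_0)=\aleph_0<\mathrm{dens}(M)$, there is no bilipschitz embedding $M\to c_0$. Hence the real difficulty is concentrated in the separable case, and this difficulty is genuine: by Aharoni's theorem every separable metric space bilipschitz-embeds into $c_0$, so no fixed non-\RNP\ space can serve as a universal blocker and $X$ must truly depend on $M$. This is exactly the structural reason to expect Theorem \ref{T:Customize} rather than its negation, and it is what makes the customization phenomenon nontrivial.

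For separable $M$ I would pass to the martingale description of the \RNP\ (Chatterji): $X$ fails the \RNP\ precisely when $X$ carries a bounded martingale that does not converge. As in the proof of Theorem \ref{T:ThickFam}, a bilipschitz copy of a thick family of geodesics produces such a martingale — the control points and branch points of Definition \ref{D:ThickFam} furnish the successive conditional expectations, boundedness comes from the geodesics having bounded $\diam$, and the thickness constant $\alpha$ forces a scale-by-scale lower bound on the accumulated jumps, i.e.\ divergence. The guiding idea is then to produce a non-\RNP\ space whose own divergent bounded martingales are metrically ``thin,'' in a way incompatible with the divergence profile that $M$ would be forced to impose.

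Concretely I would manufacture $X$ from a weighted generalized-diamond (or Laakso-type) skeleton whose branching scales $(w_n)$ are chosen adversarially against the scale structure of the thick family $T_M\subseteq M$: one realizes this skeleton inside a Banach space $X$, so $X$ contains a thick family and therefore fails the \RNP\ by Theorem \ref{T:ThickFam}, while $(w_n)$ is arranged so sparsely that the per-scale deviations available in $X$ decay, whereas those forced by $M$ (through $\alpha$) stay bounded below along a cofinal set of scales. The technical core, and the step I expect to be the main obstacle, is twofold: first, isolating a genuinely bilipschitz-invariant ``thickness/divergence profile'' of a thick family, robust enough to survive an arbitrary bounded-distortion embedding; and second, proving that the tailored $X$ excludes \emph{every} bilipschitz copy of $M$, not merely the designed thick family — this amounts to controlling all thick families hostable in $X$ and showing their profiles are dominated by $(w_n)$, which is where the quantitative incompatibility must be made rigorous.
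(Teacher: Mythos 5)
Your reduction is sound as far as it goes: for nonseparable $M$ the space $c_0$ works (it fails the \RNP\ and, being separable, cannot contain a bilipschitz --- hence homeomorphic --- copy of a nonseparable space), and your observation via Aharoni's theorem that no fixed non-\RNP\ space can block all separable $M$, so that $X$ must genuinely depend on $M$, is correct and is indeed the point of the theorem. But the entire content of the theorem lies in the separable case, and there what you offer is not a proof but a program whose two decisive steps you yourself flag as unresolved. First, the ``divergence profile'' you want to attach to a thick family is never defined, and it is doubtful that the notion of thickness supplies one: Definition \ref{D:ThickFam} involves a single constant $\alpha$, and the martingale argument of Section \ref{S:RNPProof} extracts from \emph{any} bilipschitz copy of \emph{any} thick family the same qualitative object --- a bounded martingale with jumps $\ge\frac14\ell\alpha$ along a subsequence --- with no scale structure attached; thickness is demanded at every refinement of the control points, so it is essentially scale-free. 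Second, even granting such an invariant, an adversarial choice of weights $(w_n)$ in a skeleton cannot by itself constrain what $X$ hosts: a Banach space is invariant under dilations, so if $X$ contains a bilipschitz copy of some thick structure at one scale it contains copies at all scales, and, being a linear space rather than just the embedded skeleton, it contains far more than the designed weighted diamond. Controlling \emph{all} bilipschitz copies of $M$ inside $X$ is exactly the hard part of the theorem, and no tool for it is proposed.

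For comparison, the proof in \cite{Ost14a} takes a different route: it does not play one thick family against another, but builds $X$ using the Bourgain--Rosenthal construction \cite{BR80} of ``small'' subspaces of $L_1(0,1)$ which nevertheless fail the \RNP. There the mechanism excluding $M$ comes from the linear-theoretic smallness properties of these subspaces, not from a quantitative mismatch of branching scales. So the gap in your proposal is not a matter of missing routine details: the key invariant and the key exclusion argument are both absent, and the most natural reading of your plan runs directly into the dilation-invariance obstruction above.
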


Because of Theorem \ref{T:Customize} the following is an open
problem:

\begin{problem}\label{P:TestRNP} Can we characterize the \RNP\ using test-spaces?
\end{problem}

Also I would like to mention the problem of the metric
characterization of the \RNP\ can have many different (correct)
answers, so it is natural to try to find metric characterizations
of the \RNP\ in some other terms.

\begin{itemize}

\item Proof of Theorem \ref{T:ThickFam} (in both directions) is
based on the characterization of the \RNP\ in terms of
martingales. It will be presented in Section \ref{S:RNPProof}.

\item It is not true that each Banach space without \RNP\ contains
a thick family of geodesics, because Banach spaces without \RNP\
can have the uniqueness of geodesics property (consider a strictly
convex renorming of a separable Banach space without \RNP), so the
words `bilipschitz embedding' in Theorem \ref{T:ThickFam} cannot
be replaced by `isometric embedding'.

\item Proof of Theorem \ref{T:Customize} is based on the
construction of Bourgain and Rosenthal \cite{BR80} of `small'
subspaces of $L_1(0,1)$ which still do not have the
Radon-Nikod\'ym property.

\end{itemize}

\begin{itemize}

\item  Studying metric characterizations of the RNP, it would be
much more useful and interesting to get a characterization of all
metric spaces which do not admit bilipschitz embeddings into
Banach spaces with the RNP.

\item In view of Theorem \ref{T:ThickFam} it is natural to ask:
whether the presence of bilipschitz images of thick families of
geodesics characterizes metric spaces which do not admit
bilipschitz embeddings into Banach spaces with the RNP?

\item It is clear that the answer to this question in full
generality is negative: we may just consider a dense subset of a
Banach space without the RNP which does not contain any continuous
curves.

\item So we restrict our attention to spaces containing
sufficiently large collections of continuous curves. Our next
result is a negative answer even in the case of geodesic metric
spaces. Recall a metric space is called {\it geodesic} if any two
points in it are joined by a geodesic.

\end{itemize}

\begin{theorem}[\cite{Ost14d}]\label{T:ThFamMSNo} There exist geodesic metric
spaces which satisfy the following two conditions simultaneously:

\begin{itemize}

\item  Do not contain bilipschitz images of thick families of
geodesics.

\item  Do not admit bilipschitz embeddings into Banach spaces with
the Ra\-don-Niko\-d\'ym property.
\end{itemize}

\end{theorem}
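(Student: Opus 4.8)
The plan is to take for $M$ the (continuous, real) Heisenberg group $\mathbb{H}(\mathbb{R})$ equipped with its subriemannian (Carnot--Carath\'eodory) metric. This is a complete, locally compact length space, hence geodesic, so it is a legitimate candidate geodesic metric space. The second required property is then immediate from the literature: by Cheeger--Kleiner \cite{CK06} and Lee--Naor \cite{LN06} (generalizing Semmes' observation on Pansu's differentiation theorem), $\mathbb{H}(\mathbb{R})$ with its subriemannian metric admits no bilipschitz embedding into any Banach space with the RNP. Thus it remains only to verify the first property: that $M$ contains no bilipschitz image of a thick family of geodesics.

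For the first property I would argue by contradiction, using Markov convexity as the separating invariant. Li \cite{Li14,Li14+} proved that $\mathbb{H}(\mathbb{R})$ is Markov $p$-convex for some $p\in(0,\infty)$; fix such a $p$ and set $\Pi=\Pi_p(M)<\infty$. Markov $p$-convexity is monotone under passing to subsets, since the defining inequality \eqref{E:MarkConv} is quantified over all maps $f:\Omega\to X$, so any subset inherits the same constant. It is also stable under bilipschitz embeddings: an embedding of distortion $D$ raises the constant by at most a factor $D$, because both sides of \eqref{E:MarkConv} scale by the $p$-th power of the bilipschitz factor. Consequently every metric space that admits a bilipschitz embedding into $M$ is again Markov $p$-convex, with constant controlled by $\Pi$ and the distortion.

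The crux is the complementary fact that a metric space carrying a thick family of geodesics cannot be Markov $p$-convex for any $p$; this is exactly the content of the more general Theorem \ref{T:ThickNoMark}. Granting it, suppose for contradiction that $M$ contained a bilipschitz image of a thick family of geodesics, that is, that some metric space $N$ carrying a thick family embeds bilipschitzly into $M$. By the previous paragraph $N$ would be Markov $p$-convex, contradicting Theorem \ref{T:ThickNoMark}. Hence $M$ contains no such image, giving the first required property, and $M=\mathbb{H}(\mathbb{R})$ witnesses the theorem.

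The main obstacle is the key non-convexity input (Theorem \ref{T:ThickNoMark}): that the thickness axioms of Definition \ref{D:ThickFam} force the Markov $p$-convexity constants to blow up. I would prove this by adapting the Lee--Naor--Peres downward-random-walk computation carried out above for the binary trees $T_{2^m}$. The branching in clause (iii) --- a pair $g,\widetilde g$ agreeing at the control points $q_0<\dots<q_m$ but deviating by total amount $\ge\alpha$ at the interior points $s_i$ --- together with the closure property (iv) allowing arbitrary splicing of $g$ and $\widetilde g$ between consecutive $q_i$, supplies precisely the self-similar tree/diamond-like structure on which one builds a Markov chain whose left-hand side in \eqref{E:MarkConv} diverges while the right-hand side stays bounded. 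The delicate point, already flagged for diamond and Laakso graphs in the remark following the $T_{2^m}$ example, is that after two trajectories separate they may later recombine at a common control point; one must therefore organize the iteration so that the separation accumulated at each scale is genuinely captured by the sum over $k$, rather than being cancelled by the recombination.
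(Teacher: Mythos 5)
Your proposal follows exactly the route of the paper: take $\mathbb{H}(\mathbb{R})$ with its subriemannian metric, quote Cheeger--Kleiner \cite{CK06} and Lee--Naor \cite{LN06} for non-embeddability into RNP spaces, quote Li \cite{Li14,Li14+} for Markov convexity of $\mathbb{H}(\mathbb{R})$, note that Markov $p$-convexity passes to subsets and bilipschitz images, and invoke Theorem \ref{T:ThickNoMark} to rule out bilipschitz images of thick families of geodesics. This is correct and is essentially the same argument the paper gives (the paper likewise cites Theorem \ref{T:ThickNoMark} rather than proving it, so your concluding sketch of its proof is extra material not required for this statement).
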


In \cite{Ost14d} it was shown that the Heisenberg group with its
subriemannian metric is an example of such metric space. Let us
recall the corresponding definitions.

\begin{definition}\label{D:Heisenberg}
The {\it Heisenberg group} $\mathbb{H}(\mathbb{R})$ can be defined
as the group of real upper-triangular matrices with $1$'s on the
diagonal:

\[\left[\begin{array}{ccc} 1 & x & z \\
0 & 1 & y \\
0 & 0 & 1
\end{array}\right].\]

One of the ways to introduce the {\it subriemannian metric on
$\mathbb{H}(\mathbb{R})$} is to find the tangent vectors of the
curves produced by left translations in $x$ and in $y$ directions,
that is,

\[\left.\frac{d}{d\ep}\right|_{\ep=0}\left[\begin{array}{ccc} 1 & \ep & 0 \\
0 & 1 & 0 \\
0 & 0 & 1
\end{array}\right]
\left[\begin{array}{ccc} 1 & x & z \\
0 & 1 & y \\
0 & 0 & 1
\end{array}\right]=\left[\begin{array}{ccc} 0 & 1 & y \\
0 & 0 & 0 \\
0 & 0 & 0
\end{array}\right]\]

\[\left.\frac{d}{d\ep}\right|_{\ep=0}\left[\begin{array}{ccc} 1 & 0 & 0 \\
0 & 1 & \ep \\
0 & 0 & 1
\end{array}\right]
\left[\begin{array}{ccc} 1 & x & z \\
0 & 1 & y \\
0 & 0 & 1
\end{array}\right]=\left[\begin{array}{ccc} 0 & 0 & 0 \\
0 & 0 & 1 \\
0 & 0 & 0
\end{array}\right]\]

We introduce the distance between $u,v\in \mathbb{H}(\mathbb{R})$
as the infimum of lengths of differentiable curves joining $u$ and
$v$ with the restriction that the tangent vector at each point of
the curve is a linear combination of the two tangent vectors
computed above.
\end{definition}

This metric has been systematically studied (see
\cite{CDPT07,Gro96,Mon02}), it has very interesting geometric
properties. The Heisenberg group $\mathbb{H}(\mathbb{R})$ with its
subriemannian metric is a very important example for Metric
Geometry and its applications to Computer Science. One of the
reasons for this is its poor embeddability into many classes of
Banach spaces. As we already mentioned, Cheeger-Kleiner
\cite{CK06} and Lee-Naor \cite{LN06} proved that the Heisenberg
group does not admit a bilipschitz embedding into a Banach space
with the \RNP. It remains to show that it does not admit a
bilipschitz embedding of a thick family of geodesics.

\begin{remark} It is not needed for our argument, but is worth mentioning that

\begin{itemize}

\item Cheeger-Kleiner \cite{CK10} proved that
$\mathbb{H}(\mathbb{R})$ does not admit a bilipschitz embedding
into $L_1(0,1)$.

\item Cheeger-Kleiner-Naor \cite{CKN11} found quantitative
versions of the previous result for embeddings of finite subsets
of $\mathbb{H}(\mathbb{R})$  into $L_1(0,1)$. These quantitative
results are important for Theoretical Computer Science.
\end{itemize}
\end{remark}

We finish the proof of Theorem \ref{T:ThFamMSNo} by using the
notion of Markov convexity (Definition \ref{D:MarkConv}), proving

\begin{theorem}[\cite{Ost14d}]\label{T:ThickNoMark} A metric space with a
thick family of geodesics is not Markov $p$-convex for any
$p\in(0,\infty)$.
\end{theorem}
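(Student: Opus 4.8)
The plan is to show that $\Pi_p(M)=\infty$ for every $p\in(0,\infty)$ by constructing, inside $M$, arbitrarily deep ``diamond-like'' families of geodesics on which a suitable Markov chain violates the inequality \eqref{E:MarkConv} by an amount that grows with the depth. One cannot hope to reduce to the binary-tree computation preceding this theorem: a thick family may be as ``thin'' as the infinite diamond $D_\omega$, which (by Theorem \ref{T:NETreeInDiam}) does not contain large binary trees, so the reconvergence of geodesics must be dealt with head-on.

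First I would iterate the thickness property (Definition \ref{D:ThickFam}) to build, for each $n$, a nested system of forks. Starting from a single $g\in T$ parametrized on $[0,L]$ with $L=d_M(u,v)$, apply Definition \ref{D:ThickFam} with control points $\{u,v\}$ to obtain a partner $\widetilde g$, common points $0=q_0<q_1<\dots<q_{m}=L$, and fork parameters $s_i\in(q_{i-1},q_i)$ with $\sum_i d_M(g(s_i),\widetilde g(s_i))\ge\alpha$. The cells $[q_{i-1},q_i]$ carry two branches each, and clause (iv) guarantees that every concatenation of branches is again a geodesic in $T$. Passing to generation $j+1$, I reapply Definition \ref{D:ThickFam} to a chosen geodesic of generation $j$, taking as control points all the current common points; this forces the new common points to refine the previous partition and creates, inside the generation-$j$ cells, new forks whose deviations again sum to at least $\alpha$. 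Iterating $n$ times yields a structure with $n$ generations of forks, each of aggregate separation $\ge\alpha$, realized by genuine geodesics of $M$.

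Next I would run the Markov-convexity computation on this structure, exactly parallel to the tree computation above but accounting for reconvergence. Let $\{X_t\}$ be the downward random walk that traverses a random admissible geodesic, sampled at the breakpoints of all $n$ generations and making independent fair fork-choices, and let $f$ send each state to the corresponding point of $M$. The right-hand side of \eqref{E:MarkConv} is controlled by the total traversal and stays bounded by a constant independent of $n$. For the left-hand side, the crucial point is to read off, at each scale $k$ matched to the corresponding generation, the contribution of an independent copy $\widetilde X_t(t-2^{k})$ that split off just before entering a generation-$k$ fork: because it re-randomizes that fork, at the measurement time $t$ it typically lies on the opposite branch, so $\mathbb{E}[d_M(f(X_t),f(\widetilde X_t(t-2^{k})))^p]$ picks up a term of order the cell deviation. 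Summing these over the cells of a generation recovers the aggregate separation $\alpha$, so each generation contributes at least a fixed positive amount to the normalized left-hand side; with $n$ generations the left-hand side grows like $n$ while the right-hand side is $O(1)$, whence $\Pi_p(M)^p\gtrsim n$. Letting $n\to\infty$ gives $\Pi_p(M)=\infty$.

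The main obstacle is precisely the feature the paper flags for diamonds and Laakso graphs: after a fork the two branches reconverge, so a naive measurement finds the two copies already reunited and the left-hand side collapses. The delicate work is therefore twofold. One must align the dyadic time-scales $2^{k}$ of Definition \ref{D:MarkConv} with the irregular cell sizes produced by the thickness property: the breakpoints $q_i$ sit at arbitrary positions, so one has to calibrate the number of sampling steps per cell in order to catch an independent copy that split off $2^k$ steps earlier \emph{mid-fork}, at maximal separation, rather than after rejoining. One must also verify that clause (iv) really does license the independent fork-choices across all $n$ generations simultaneously, so that the walk and all its re-randomized copies remain inside $T$. Once these calibrations are made, the per-generation lower bound and the resulting divergence are routine, following the pattern of the $T_{2^m}$ estimate displayed above.
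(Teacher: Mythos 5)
Your high-level plan---iterate Definition~\ref{D:ThickFam} to create $n$ generations of forks, run a geodesic-following walk with independent fair choices at the forks, and show that each generation contributes a fixed positive amount to the left-hand side of \eqref{E:MarkConv} while the right-hand side stays of order one---is indeed the strategy of \cite{Ost14d} (this survey states the theorem without proof; the closest displayed argument is the martingale construction of Section~\ref{S:RNPProof}, whose iteration pattern is the one you need). But two places where your construction deviates from that pattern are genuine gaps, not routine calibrations. First, you refine only ``a chosen geodesic of generation $j$.'' Then generation-$(j+1)$ forks exist only along the branches that this one geodesic follows, so the walk and its re-randomized copy can exploit such a fork only if both of them took the ``refined'' branch at every coarser branching cell above it. Since those coarser choices are either coin flips of the walk or re-randomized by the copy, this event in general has probability exponentially small in $j$; the per-generation gain then decays geometrically and the left-hand side can stay bounded instead of growing like $n$. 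You must apply the thickness condition to \emph{every} admissible mixture (every leaf of the current branch tree) at every generation---this, rather than an after-the-fact verification, is the role of clause~(iv).

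Second, your key estimate ``at the measurement time $t$ it typically lies on the opposite branch, so picks up a term of order the cell deviation'' is unjustified as stated, and no alignment of the time scales $2^k$ repairs it. You take only the common points as control points when refining, so later generations are free to move the walk's position at the parameter $s_i$: a single generation-$(j+1)$ fork may carry deviation comparable to $\alpha$ and may sit exactly at $s_i$, because---unlike in diamond or Laakso graphs---thickness provides no geometric decay of deviations across generations (indeed condition (iii) does not even spread the new deviation over the cells: all of it may land in one cell). The remedy, which is exactly what the paper's own martingale argument does when it takes the whole list \eqref{E:listBefore(2)}, common points \emph{and} selected deviation points $z_i'$, as the next set of control points, is to include the deviation points of the branch being refined among the control points at all later stages; then every later geodesic following a given branch passes through that branch's deviation point, and two copies on opposite branches of a fork are, at parameter $s_i$, at distance exactly that fork's deviation. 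Finally, even after these repairs, ``summing over the cells recovers $\alpha$'' is not yet the computation: the fork deviations $d_i$ and cell lengths $\ell_i$ within one generation are wildly uneven, each fork must be caught at its own scale $2^{k_i}\approx \ell_i T$, and for $p\ge 1$ one needs the power-mean inequality $\sum_i d_i^p\ell_i^{1-p}\ge\left(\sum_i d_i\right)^p\left(\sum_i \ell_i\right)^{1-p}\ge\alpha^p$ to convert the aggregate separation $\sum_i d_i\ge\alpha$ into a lower bound for the scale-normalized sum in \eqref{E:MarkConv}.
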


\noindent and combining it with the following result

\begin{theorem}[\cite{Li14,Li14+}] The Heisenberg group
$\mathbb{H}(\mathbb{R})$ is Markov $4$-convex.
\end{theorem}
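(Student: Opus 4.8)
The plan is to exploit the comparability of the sub-Riemannian metric with the homogeneous (Korányi) gauge and to split the relevant $p=4$ quantity into a horizontal part, governed by the Markov $2$-convexity of the plane, and a vertical part, governed by a second-moment estimate on swept areas. First I would fix a Markov chain $\{X_t\}_{t\in\mathbb{Z}}$ on $\Omega$ and a map $f:\Omega\to\mathbb{H}(\mathbb{R})$, and write $Z_t=f(X_t)=(W_t,\zeta_t)$, where $W_t\in\mathbb{R}^2$ is the horizontal projection and $\zeta_t\in\mathbb{R}$ the central coordinate. From the group law the central coordinate of $Z_s^{-1}Z_t$ equals $\zeta_t-\zeta_s$ twisted by a signed-area term in the horizontal coordinates, and the Carnot--Carath\'eodory distance obeys
\[
d(Z_s,Z_t)\asymp \|W_t-W_s\|_2 + \big|\,\mathrm{vert}(Z_s,Z_t)\,\big|^{1/2},
\]
where $\mathrm{vert}(Z_s,Z_t)$ denotes this twisted central difference. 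Raising to the fourth power linearizes the anisotropy:
\[
d(Z_s,Z_t)^4\asymp \|W_t-W_s\|_2^4 + \mathrm{vert}(Z_s,Z_t)^2 .
\]
This is the structural reason the correct exponent is $4=2\cdot 2$: horizontal motion contributes at power $4$, while the vertical (area) coordinate, which is \emph{second order} in the horizontal increments, contributes at power $2$.

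Next I would bound the two resulting contributions to the left-hand side of \eqref{E:MarkConv} separately. For the horizontal part, the projection $W=\pi\circ f:\Omega\to\mathbb{R}^2$ lands in a Hilbert space, and the ``tilde'' (split-then-evolve-independently) operation depends only on the chain $\{X_t\}$ on $\Omega$, so it commutes with $\pi$, giving $\pi(\widetilde Z_t(k))=\widetilde W_t(k)$. Since $\mathbb{R}^2$ is Markov $2$-convex, hence Markov $4$-convex by the Banach-space implication recorded in Remark \ref{R:MarkovpVSq}, the horizontal contribution
\[
\sum_{k=0}^{\infty}\sum_{t\in\mathbb{Z}}\frac{\mathbb{E}\big[\|W_t-\widetilde W_t(t-2^k)\|_2^4\big]}{2^{4k}}
\]
is at most $\Pi_4(\mathbb{R}^2)^4\sum_t\mathbb{E}[\|W_t-W_{t-1}\|_2^4]\le \Pi_4(\mathbb{R}^2)^4\sum_t\mathbb{E}[d(Z_{t-1},Z_t)^4]$, using that $\pi$ is $1$-Lipschitz, i.e.\ $\|W_t-W_{t-1}\|_2\le d(Z_{t-1},Z_t)$.

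The crux, and the step I expect to be the main obstacle, is the vertical contribution
\[
\sum_{k=0}^{\infty}\sum_{t\in\mathbb{Z}}\frac{\mathbb{E}\big[\mathrm{vert}(Z_t,\widetilde Z_t(t-2^k))^2\big]}{2^{4k}}.
\]
Here the twisted central difference between $Z_t$ and the independent copy $\widetilde Z_t(t-2^k)$ accumulates, over a window of length $2^k$, as a signed area swept between two horizontal trajectories that coincide up to time $t-2^k$ and then evolve independently and identically given the common past. I would expand $\mathrm{vert}(Z_t,\widetilde Z_t(t-2^k))$ as a telescoping sum of one-step area increments and use two facts: (i) conditioned on the split, the two copies are i.i.d., so the cross terms of the area functional have controlled conditional means and the second moment collapses to a sum of variances of area increments; and (ii) each area increment over a sub-window is quadratically controlled by the horizontal displacement there, so its second moment is bounded by fourth moments of horizontal increments. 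Summing these bounds against the weights $2^{-4k}$ and reorganizing, so that each increment $d(Z_{t-1},Z_t)^4$ is charged only by a geometric series in $k$, is what converts the area estimate into a bound by $\sum_t\mathbb{E}[d(Z_{t-1},Z_t)^4]$. The delicate point is that the vertical coordinate is genuinely second order, so naive Lipschitz or triangle bounds lose the needed power; the matching of $\mathrm{vert}^2$ against horizontal fourth moments must be carried out using the independence structure and the cancellation in the area functional it affords, and it is precisely this computation that pins the exponent at $4$.

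Finally I would recombine the horizontal and vertical bounds through the comparison $d^4\asymp\|\cdot\|_2^4+\mathrm{vert}^2$ to conclude $\Pi_4(\mathbb{H}(\mathbb{R}))<\infty$. If convenient, a net/approximation argument lets one pass between the continuous group $\mathbb{H}(\mathbb{R})$ and the discrete $\mathbb{H}(\mathbb{Z})$; and the sharpness of the exponent (failure of Markov $p$-convexity for $p<4$) is a separate matter, not needed for the present statement.
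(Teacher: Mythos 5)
First, a point of reference: the survey you were given does not prove this theorem at all --- it is quoted from Li \cite{Li14,Li14+} and used as a black box (to finish the proof of Theorem \ref{T:ThFamMSNo}), so your attempt must be measured against Li's actual argument, which occupies a substantial part of \cite{Li14+}. Your skeleton is the right one, and it matches the known structural reason for the exponent $4$: the ball-box comparison $d(Z_s,Z_t)\asymp \|W_t-W_s\|_2+|\mathrm{vert}(Z_s,Z_t)|^{1/2}$, the resulting splitting of the left-hand side of \eqref{E:MarkConv} into a horizontal and a vertical sum, and the horizontal bound via Markov $2$-convexity (hence $4$-convexity, by the Banach-space case of Remark \ref{R:MarkovpVSq}) of $\mathbb{R}^2$, using that the projection commutes with the split-and-re-evolve operation, are all correct and routine.

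The genuine gap is in the vertical estimate, which is where the whole proof lives, and your proposed mechanism for it fails as stated. Two problems. (a) A Markov chain has drift: the horizontal increments are not conditionally centered, so the swept-area functional is not a sum of martingale differences and the cross terms in its second moment do not vanish; conditional i.i.d.-ness of the two copies kills only the conditional \emph{mean} of the antisymmetric cross term $\omega(W_t-W_s,\widetilde W_t-W_s)$, not the second moments you need. To exploit independence one must first perform a Doob-type decomposition of the horizontal chain into martingale plus predictable part --- a step absent from your outline. (b) More decisively, your claim that ``each area increment is quadratically controlled by the horizontal displacement there'' is false: the increment of the relative vertical coordinate at step $j$ carries the lever arm $W_j-W_{t-2^k}$, of size up to $2^k$ times a step, not a locally quadratic quantity. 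Concretely, let the chain choose at the split time one of two horizontal rays with probability $\frac12$ each and then move deterministically at unit speed. After $2^k$ steps the relative vertical coordinate is the area between two diverging unit-speed segments, of order $2^{2k}$, so $\mathbb{E}[\mathrm{vert}^2]\sim 2^{4k}$, while $\sum_j \mathbb{E}[d(Z_{j-1},Z_j)^4]\approx 2^k$ on the window; hence no bound of the shape $\mathbb{E}[\mathrm{vert}^2]\lesssim 2^{2k}\sum_j\mathbb{E}[d(Z_{j-1},Z_j)^4]$ --- the strength your geometric-series bookkeeping requires --- can hold, and the vertical sum cannot be closed against the right-hand side of \eqref{E:MarkConv} alone. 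In that configuration $\mathrm{vert}^2\sim\|W_t-\widetilde W_t\|_2^4$, so the vertical term must instead be absorbed into the horizontal-difference part of the \emph{left}-hand side and then controlled by re-invoking the Markov convexity of the plane: an absorption/bootstrap interleaving the two parts, which is precisely the delicate core of Li's proof. Your sketch names the correct destination, but the stated mechanism (``second moment collapses to a sum of variances'') would fail on any chain with drift, so the central step remains unproven.
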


\begin{remark}\label{R:MarkovConvDiam} Since the infinite diamond $D_\omega$ and the
Laakso space $L_\omega$ contain thick families of geodesics, they
are not Markov $p$-convex for any $p\in(0,\infty)$. Since the
unions of $\{D_n\}_{n=0}^\infty$ and $\{L_n\}_{n=0}^\infty$
(considered as finite sets) are dense in $D_\omega$ and
$L_\omega$, respectively; we conclude that  Markov $p$-convexity
constants of diamond graphs and Laakso graphs are not uniformly
bounded for any $p\in(0,\infty)$.
\end{remark}

\begin{remark} It is worth mentioning that the discrete
Heisenberg group $\mathbb{H}(\mathbb{Z})$ embeds into a Banach
space with the \RNP. Since $\mathbb{H}(\mathbb{Z})$ is locally
finite, this follows by combining the well-known observation of
Fr\'echet on isometric embeddability of any $n$-element set into
$\ell_\infty^n$ (see \cite[p.~6]{Ost13a}) with the finite
determination (Theorem \ref{T:bilip}, actually the earlier result
of \cite{BL08} suffices here). In fact, these results imply
bilipschitz embeddability of $\mathbb{H}(\mathbb{Z})$ into the
direct sum $(\oplus_{n=1}^\infty \ell_\infty^n)_2$, which has the
\RNP\ because it is reflexive.
\end{remark}

\subsubsection{Proof of Theorem
\ref{T:ThickFam}}\label{S:RNPProof}

First we prove: {\bf No \RNP\ $\Rightarrow$ bilipschitz
embeddability of a thick family of geodesics.}
\medskip

We need to define a more general structure than that of a
$\delta$-tree (see Definition \ref{D:DelTrees}), in which each
element is not a midpoint of a line segment, but a convex
combination.

\begin{definition}\label{D:DelBush}
Let $Z$ be a Banach space and let $\delta>0$. A set of vectors
$\{z_{n,j}\}_{n=0,j=1}^{~\infty~~m_n}$ in $Z$ is called a
$\delta$-{\it bush} if $m_0=1$ and for every $n\ge 1$ there is a
partition $\{A^n_k\}_{k=1}^{m_{n-1}}$ of $\{1,\dots,m_n\}$ such
that
\begin{equation}
||z_{n,j}-z_{n-1,k}||\ge \delta
\end{equation}
for every  $n\ge 1$ and for every $j\in A^n_k$, and
\begin{equation}\label{E:Bush2} z_{n-1,k}=\sum_{j\in
A^n_k}\lambda_{n,j}z_{n,j}\end{equation} for some
$\lambda_{n,j}\ge 0$, $\sum_{j\in A^n_k}\lambda_{n,j}=1$.
\end{definition}

\begin{theorem}\label{T:BushRNP}  A Banach space $Z$ does not have the \RNP\ if and only
if it contains a bounded $\delta$-bush for some $\delta>0$.
\end{theorem}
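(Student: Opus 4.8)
The plan is to route both implications through the equivalent descriptions of the \RNP\ recalled earlier: the probabilistic one (a space has the \RNP\ iff every bounded $Z$-valued martingale converges in $L_1$) for the ``if'' direction, and the geometric one (a space has the \RNP\ iff every bounded closed convex set is dentable) for the ``only if'' direction. Suppose first that $Z$ contains a bounded $\delta$-bush $\{z_{n,j}\}$, say with $\|z_{n,j}\|\le R$. I would manufacture a bounded non-convergent martingale from it. Take a probability space carrying a refining sequence of finite partitions: a single atom at level $0$, and, having produced an atom $A_{n-1,k}$ of measure $\mu(A_{n-1,k})$ associated with $z_{n-1,k}$, split it into atoms $A_{n,j}$ ($j\in A^n_k$) with $\mu(A_{n,j})=\lambda_{n,j}\mu(A_{n-1,k})$ (discarding zero-weight children). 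Setting $f_n=z_{n,j}$ on $A_{n,j}$, the identity $z_{n-1,k}=\sum_{j\in A^n_k}\lambda_{n,j}z_{n,j}$ is exactly $\Ex[f_n\mid\mathcal F_{n-1}]=f_{n-1}$, so $(f_n)$ is a martingale, uniformly bounded by $R$. The lower bound $\|z_{n,j}-z_{n-1,k}\|\ge\delta$ gives $\|f_n-f_{n-1}\|_{L_1}=\Ex\|f_n-f_{n-1}\|\ge\delta$ for every $n$, so the consecutive differences do not tend to $0$ and $(f_n)$ cannot converge in $L_1$. By the probabilistic characterization, $Z\notin\RNP$.

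For the converse, assume $Z\notin\RNP$ and fix a bounded closed convex $C\subset Z$ that is not dentable; thus there is $\epsilon>0$ such that every slice of $C$ has diameter $\ge\epsilon$. The first step is the standard ``far points'' lemma: every $x\in C$ lies in $\overline{\mathrm{conv}}\,(C\setminus B(x,\delta'))$ with $\delta'=\epsilon/3$. Indeed, were this false for some $x$, Hahn--Banach would strictly separate $x$ from that closed convex hull by a functional $f$, and then for small $\alpha$ the slice $\{y\in C:\ f(y)>\sup_C f-\alpha\}$ would sit inside $B(x,\delta')$ and hence have diameter $\le 2\delta'<\epsilon$, contradicting non-dentability. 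So from any node one can always express it \emph{approximately} as a finite convex combination of points of $C$ at distance $\ge\delta'$ from it.

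The main obstacle is precisely that the bush requires \emph{exact} convex combinations, whereas the far-points lemma only furnishes approximate ones. I would resolve this by a telescoping-correction construction. Choose errors $\eta_n>0$ with $\sum_n\eta_n<\delta'/2$, and build auxiliary points $w_{n,j}\in C$ inductively: $w_{0,1}\in C$ arbitrary, and given $w_{n-1,k}$, pick $w_{n,j}\in C\setminus B(w_{n-1,k},\delta')$ ($j\in A^n_k$) with convex weights $\lambda_{n,j}$ so that $e_{n-1,k}:=w_{n-1,k}-\sum_{j}\lambda_{n,j}w_{n,j}$ satisfies $\|e_{n-1,k}\|\le\eta_n$. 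Now set $z_{n,j}=w_{n,j}+c_{n,j}$ with corrections $c_{0,1}=0$ and $c_{n,j}=c_{n-1,k}+e_{n-1,k}$ for $j\in A^n_k$. Then $\sum_{j}\lambda_{n,j}c_{n,j}=c_{n-1,k}+e_{n-1,k}$, whence $\sum_{j}\lambda_{n,j}z_{n,j}=(w_{n-1,k}-e_{n-1,k})+(c_{n-1,k}+e_{n-1,k})=z_{n-1,k}$, so the convex-combination identity now holds \emph{exactly}. Moreover $\|c_{n,j}\|\le\sum_i\eta_i<\delta'/2$, so $\{z_{n,j}\}$ is bounded, and since $z_{n,j}-z_{n-1,k}=(w_{n,j}-w_{n-1,k})+e_{n-1,k}$ we get $\|z_{n,j}-z_{n-1,k}\|\ge\delta'-\eta_n\ge\delta'/2$. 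Hence $\{z_{n,j}\}$ is a bounded $(\delta'/2)$-bush, finishing the proof.

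I expect the only genuinely delicate point to be this passage from approximate to exact convex representations; once the correct geometric reading of non-dentability (the far-points lemma) is in hand, the martingale bookkeeping in the first paragraph and the Hahn--Banach separation are both routine, and the correction scheme absorbs all the errors into a uniformly small, summable perturbation without disturbing either boundedness or the distance lower bound.
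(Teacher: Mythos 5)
Your proof is correct. Note that the paper itself does not actually prove Theorem~\ref{T:BushRNP}: it only remarks that the result can be derived from Chatterji's martingale characterization \cite{Cha68} and attributes the theorem to James \cite{Jam81}. Your ``if'' direction (bush $\Rightarrow$ no \RNP) is precisely that hinted route: the refining-partition probability space turns the bush into a uniformly bounded martingale whose consecutive differences are pointwise $\ge\delta$, hence divergent in $L_1$. Your ``only if'' direction is a genuinely different, fully written-out argument: rather than extracting a bush from a divergent martingale or from a vector measure without a density, you start from the geometric characterization (non-\RNP\ yields a bounded closed convex non-dentable set $C$), prove the far-points lemma by Hahn--Banach separation, and then handle the real difficulty---that non-dentability only furnishes \emph{approximate} convex combinations---by the telescoping correction $c_{n,j}=c_{n-1,k}+e_{n-1,k}$. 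That scheme is exactly right: the corrections are constant across siblings, so they pass through the convex combinations and make the identity \eqref{E:Bush2} exact, while the summability $\sum_n\eta_n<\delta'/2$ preserves both boundedness and the lower bound $\|z_{n,j}-z_{n-1,k}\|\ge\delta'-\eta_n\ge\delta'/2$. The one caveat worth recording is that the equivalence you invoke as a black box in this direction (``every bounded closed convex set dentable $\Rightarrow$ \RNP'', used in contrapositive form) is itself a deep theorem of Rieffel, Maynard, Huff, and Davis--Phelps, whose standard proof runs through constructions very close to the bush you are building; since the paper lists this equivalence among the known definitions of the \RNP, relying on it is legitimate, but the logical weight of your ``only if'' direction rests entirely on it.
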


\begin{remark} Theorem \ref{T:BushRNP} can be derived from Chatterji's result
\cite{Cha68}. Apparently Theorem \ref{T:BushRNP} was first proved
by James, possibly even before Chatterji, see \cite{Jam81}.
\end{remark}

\begin{itemize}
\item We construct a suitable thick family of geodesics using a
bounded $\delta$-bush in a Banach space without the \RNP.

\item It is not difficult to see (for example, using the
Clarkson-Gelfand characterization) that a subspace of codimension
$1$ (hyperplane) in a Banach space without the \RNP\ does not have
the \RNP.

\item Let $X$ be a non-\RNP\ Banach space. We pick a norm-one
vector $x\in X$, then a norm-one functional $x^*$ on $X$
satisfying $x^*(x)=1$. Then (by the previous remark) we find a
bounded $\delta$-bush (for some $\delta>0$) in the kernel $\ker
x^*$. We shift this bush adding $x$ to each of its elements, and
get a (still bounded) $\delta$-bush $\{x_{i,j}\}$ satisfying the
condition $x^*(x_{i,j})=1$ for each $i$ and $j$.

\item Now we change the norm of $X$ to equivalent. The purpose of
this step is to get a norm for which we are able to construct the
thick family of geodesics in $X$, and there will be no need in a
bilipschitz embedding. (One can easily see that this would be
sufficient to prove the theorem.)

\item The unit ball of the norm $||\cdot||_1$ is defined as the
closed convex hull of the unit ball in the original norm and the
set of vectors $\{\pm x_{i,j}\}$ (recall that $\{x_{i,j}\}$ form a
bush in the hyperplane $\{x:~x^*(x)=1\}$). It is easy to check
that in this new norm the set $\{x_{i,j}\}$ is a bounded
$\delta$-bush (possibly with a somewhat smaller $\delta>0$, but we
keep the same notation). Also in the new norm we have
$||x_{i,j}||_1=1$ for all $i$ and $j$. For simplicity of notation
we shall use $||\cdot||$ to denote the new norm.
\end{itemize}

\begin{itemize}
\item We are going to use this $\delta$-bush to construct a thick
family $T_X$ of geodesics in $X$ joining $0$ and $x_{0,1}$. First
we construct a subset of the desired set of geodesics, this subset
will be constructed as the set of limits of certain broken lines
in $X$ joining $0$ and $x_{0,1}$. The constructed broken lines are
also geodesics (but they do not necessarily belong to the family
$T_X$).

\item The mentioned above broken lines will be constructed using
representations of the form $x_{0,1}=\sum_{i=1}^mz_i$, where $z_i$
are such that $||x_{0,1}||=\sum_{i=1}^m||z_i||$. The broken line
represented by such finite sequence $z_1,\dots,z_m$ is obtained by
letting $z_0=0$ and joining $\sum_{i=0}^kz_i$  with
$\sum_{i=0}^{k+1}z_i$ with a line segment for $k=0,1,\dots,m-1$.
Vectors $\sum_{i=0}^k z_i$, $k=0,1,\dots,m$ will be called {\it
vertices} of the broken line.

\item The infinite set of broken lines which we construct is
labelled by vertices of the infinite binary tree $T_\infty$ in
which each vertex is represented by a finite (possibly empty)
sequence of $0$ and $1$.
\end{itemize}

\begin{itemize}
\item The broken line corresponding to the empty sequence
$\emptyset$ is represented by the one-element sequence $x_{0,1}$,
so it is just a line segment joining $0$ and $x_{0,1}$.

\item We have
\[x_{0,1}=\lambda_{1,1}x_{1,1}+\dots+\lambda_{1,m_1}x_{1,m_1},\]
where $||x_{1,j}-x_{0,1}||\ge \delta$. We introduce the vectors
\[y_{1,j}=\frac12(x_{1,j}+x_{0,1}).\]

\item For these vectors we have
\[x_{0,1}=\lambda_{1,1}y_{1,1}+\dots+\lambda_{1,m_1}y_{1,m_1},\]
$||y_{1,j}-x_{1,j}||=||y_{1,j}-x_{0,1}||\ge\frac{\delta}2$, and
$||y_{1,j}||=1$.

\end{itemize}

\begin{itemize}
\item As a preliminary step to the construction of the broken
lines corresponding to one-element sequences $(0)$ and $(1)$ we
form a broken line represented by the points
\begin{equation}\label{E:1stSeq}\lambda_{1,1}y_{1,1},\dots,\lambda_{1,m_1}y_{1,m_1}.\end{equation}
We label the broken line represented by \eqref{E:1stSeq} by
$\overline{\emptyset}$.

\item The broken line corresponding to the one-element sequence
$(0)$ is represented by the sequence obtained from
\eqref{E:1stSeq} if we replace each term $\lambda_{1,j}y_{1,j}$ by
a two-element sequence
\begin{equation}\label{E:(0)}\frac{\lambda_{1,j}}2x_{0,1},\frac{\lambda_{1,j}}2x_{1,j}.\end{equation}

\item The broken line corresponding to the one-element sequence
$(1)$ is represented by the sequence obtained from
\eqref{E:1stSeq} if we replace each term $\lambda_{1,j}y_{1,j}$ by
a two-element sequence
\begin{equation}\label{E:(1)}\frac{\lambda_{1,j}}2x_{1,j}, \frac{\lambda_{1,j}}2x_{0,1}.\end{equation}

\end{itemize}

\begin{itemize}
\item At this point one can see where are we going to get the
thickness property from.

\item In fact, one of the inequalities above is
$||x_{1,j}-x_{0,1}||\ge \delta$. Therefore
\[\left\|\frac{\lambda_{1,j}}2x_{1,j}-\frac{\lambda_{1,j}}2x_{0,1}\right\|\ge
\frac{\lambda_{1,j}}2\,\delta.\] Summing over all $j$, we get that
the total sum of deviations  is $\ge\frac{\delta}2$.

\item In the obtained broken lines each line segment corresponds
either to a multiple of $x_{0,1}$ or to a multiple of some
$x_{1,j}$. In the next step we replace each such line segment by a
broken line. Now we describe how we do this.

\end{itemize}

\begin{itemize}
\item Broken lines corresponding to $2$-element sequences are also
formed in two steps. To get the broken lines labelled by $(0,0)$
and $(0,1)$ we apply the described procedure to the geodesic
labelled $(0)$, to get the broken lines labelled by $(1,0)$ and
$(1,1)$ we apply the described procedure to the geodesic labelled
$(1)$.\medskip

\item In the preliminary step we replace each term of the form
$\frac{\lambda_{1,k}}2x_{0,1}$ by a multiplied by
$\frac{\lambda_{1,k}}2$ sequence
$\lambda_{1,1}y_{1,1},\dots,\lambda_{1,m_1}y_{1,m_1}$, and we
replace a term of the form  $\frac{\lambda_{1,k}}2x_{1,k}$ by the
multiplied by $\frac{\lambda_{1,k}}2$ sequence
\begin{equation}
\{\lambda_{2,j}y_{2,j}\}_{j\in A^2_k},
\end{equation}
ordered arbitrarily, where $y_{2,j}=\frac{x_{1,k}+x_{2,j}}2$ and
$\lambda_{2,j}$, $x_{2,j}$, and $A^2_k$ are as in the definition
of the $\delta$-bush (it is easy to check that in the new norm we
have $||y_{2,j}||=1$). We label the obtained broken lines by
$\overline{(0)}$ and $\overline{(1)}$, respectively.

\end{itemize}

\begin{itemize}
\item To get the sequence representing the broken line labelled by
$(0,0)$ we do the following operation with the preliminary
sequence labelled $\overline{(0)}$.

\begin{itemize}

\item Replace each multiple $\lambda y_{1,j}$ present in the
sequence by the two-element sequence
\begin{equation}\label{E:Casey1}\lambda\frac{x_{0,1}}2,\lambda\frac{x_{1,j}}2.\end{equation}

\item Replace each multiple $\lambda y_{2,j}$, with $j\in A^2_k$,
present in the sequence by the two-element sequence
\begin{equation}\label{E:Casey2}\lambda\frac{x_{1,k}}2,\lambda\frac{x_{2,j}}2.\end{equation}
\end{itemize}

\item To get the sequence representing the broken line labelled by
$(0,1)$ we do the same but changing the order of terms in
\eqref{E:Casey1} and \eqref{E:Casey2}. To get the sequences
representing the broken lines labelled by $(1,0)$ and $(1,1)$, we
apply the same procedure to the broken line labelled
$\overline{(1)}$.
\end{itemize}

\begin{itemize}
\item We continue in an ``obvious'' way and get broken lines for
all vertices of the infinite binary tree $T_\infty$. It is not
difficult to see that vertices of a broken line corresponding to
some vertex $(\theta_1,\dots,\theta_n)$ are contained in the
broken line corresponding to any extension
$(\theta_1,\dots,\theta_m)$ of $(\theta_1,\dots,\theta_n)$ $(m>n)$

\item This implies that broken lines corresponding to any ray
(that is, a path infinite in one direction) in $T_\infty$ has a
limit (which is not necessarily a broken line, but is a geodesic),
and limits corresponding to two different infinite paths have
common points according to the number of common
$(\theta_1,\dots,\theta_n)$ in the vertices of those paths.

\item A thick family of geodesics is obtained by pasting pieces of
these geodesics in all ``reasonable'' ways. All verifications are
straightforward. See the details in \cite{Ost14b}.

\end{itemize}

It remains to prove:
\smallskip

\centerline{\bf Bilipschitz embeddability of a thick family of
geodesics $\Rightarrow$ No \RNP.}

\begin{proof} We assume that a metric space $(M,d)$ with a thick family of geodesics admits a bilipschitz embedding $f:M\to X$
into a Banach space $X$ and show that there exists a bounded
divergent martingale  $\{M_i\}_{i=0}^\infty$ on $(0,1]$ with
values in $X$. We assume that
\begin{equation}\label{E:Bilip}
\ell d(x,y)\le ||f(x)-f(y)||_X\le d(x,y)
\end{equation}
for some $\ell>0$. We assume that the thick family consists of
$uv$-geodesics for some $u,v\in M$ and that $d(u,v)=1$ (dividing
all distances in $M$ by $d(u,v)$, if necessary).
\medskip

Each function in the martingale $\{M_i\}_{i=0}^\infty$ will be
obtained in the following way. We consider some finite sequence
$V=\{v_i\}_{i=0}^m$ of points on any $uv$-geodesic, satisfying
$v_0=u$, $v_m=v$ and $d(u,v_{k+1})\ge d(u,v_k)$. We define $M_V$
as the function on $(0,1]$ whose value on the interval
$(d(u,v_k),d(u,v_{k+1})]$ is equal to
\[\frac{f(v_{k+1})-f(v_k)}{d(v_k,v_{k+1})}.\]

It is clear that the bilipschitz condition \eqref{E:Bilip} implies
that $||M_V(t)||\le 1$ for any collection $V$ and any $t\in(0,1]$.
Since $\{v_i\}$ are on a geodesic, is clear that an infinite
collection of such functions $\{M_{V(k)}\}_{k=0}^\infty$ forms a
martingale if for each $k\in\mathbb{N}$ the sequence $V(k)$
contains $V(k-1)$ as a subsequence. So it remains to to find a
collection of sequences $\{V(k)\}_{k=0}^\infty$ for which the
martingale $\{M_{V(k)}\}_{k=0}^\infty$  diverges. We denote
$M_{V(k)}$ by $M_k$.\medskip

Now we describe some of the ideas of the construction.

\begin{itemize}

\item It suffices to have differences $||M_k-M_{k-1}||$ to be
bounded away from zero for some infinite set of values of $k$.

\item On steps for which we achieve such estimates from below we
add exactly one new point $z_j'$ into $V(k)$ between any two
consequent points $w_{j-1}$ and $w_j$ of $V(k-1)$. In such a case
it suffices to make the choice of points in such a way that the
values of $M_{k}$ on the intervals corresponding to pairs
$(w_{j-1},z_j')$ and  $(z_j',w_j)$ are `far' from each other, and
thus from the value of $M_{k-1}$ corresponding to $(w_{j-1},w_j)$.
Actually we do not need this condition for each pair
$(w_{j-1},w_j)$, but only ``on average''.

\item Using the definition of a thick family of geodesics and the
bilipschitz condition, we can achieve this goal. A detailed
description follows.

\end{itemize}

We let $V(0)=\{u,v\}$ and so $M_0$ is a constant function on
$(0,1]$ taking value $f(v)-f(u)$. In the next step we apply the
condition of the definition of a thick family to control points
$\{u,v\}$ and any geodesic $g$ of the family. We get another
geodesic $\widetilde g$, the corresponding sequence of common
points  $\{w_i\}_{i=0}^m$ and the corresponding pair of
sufficiently well separated sequences $\{z_i,\widetilde
z_i\}_{i=1}^{m}$ on the geodesics $g$ and $\widetilde g$. The
separation condition is $\sum_{i=1}^{m}d(z_i,\widetilde
z_i)\ge\alpha$.

We let $V(1)=\{w_i\}_{i=0}^m$. Observe, that in this step we
cannot claim any nontrivial estimates for $||M_1-M_0||_{L_1(X)}$
from below because we have not made any nontrivial assumptions on
this step of the construction (it can even happen that $M_1=M_0$).
Lower estimates for martingale differences in our argument are
obtained only for differences of the form
$||M_{2k}-M_{2k-1}||_{L_1(X)}$.
\medskip

We choose $V(2)$ to be of the form
\begin{equation}\label{E:MartSeq} w_0, z'_1, w_1, z'_2, w_n,\dots,
z'_m,w_m,\end{equation} where each $z'_i$ is either $z_i$ or
$\widetilde z_i$ depending on the behavior of the mapping $f$. We
describe this dependence below. Observe that since $z_i$ or
$\widetilde z_i$ are images of the same point in $[0,1]$, the
corresponding partition of the interval $(0,1]$ does not depend on
our choice.\medskip

To make the choice of $z_i'$ we consider the quadruple $w_{i-1},
z_i, w_i, \widetilde z_i$. The bilipschitz condition
\eqref{E:Bilip} implies $||f(z_i)-f(\widetilde z_i)||\ge\ell
d(z_i,\widetilde z_i)$. Consider two pairs of vectors
corresponding to two different choices of $z'_i$:
\medskip

\noindent{\bf Pair 1:} $f(w_i)-f(z_i)$, $f(z_i)-f(w_{i-1})$.\qquad
{\bf Pair 2:} $f(w_i)-f(\widetilde z_i)$, $f(\widetilde
z_i)-f(w_{i-1})$.
\medskip

The inequality $||f(z_i)-f(\widetilde z_i)||\ge\ell
d(z_i,\widetilde z_i)$ implies that at least one of the following
is true
\begin{equation}\label{E:z}\begin{split}\left\|\frac{f(w_i)-f(z_i)}{d(w_i,z_i)}-\frac{f(z_i)-f(w_{i-1})}{d(z_i,w_{i-1})}\right\|
\ge\frac{\ell}2\, d(z_i,\widetilde
z_i)\left(\frac1{d(w_i,z_i)}+\frac1{d(z_i,w_{i-1})}\right)\end{split}
\end{equation}
or
\begin{equation}\label{E:zTilde}\begin{split}\left\|\frac{f(w_i)-f(\widetilde
z_i)}{d(w_i,\widetilde z_i)}-\frac{f(\widetilde
z_i)-f(w_{i-1})}{d(\widetilde
z_i,w_{i-1})}\right\|\ge\frac{\ell}2\, d(z_i,\widetilde
z_i)\left(\frac1{d(w_i,\widetilde z_i)}+\frac1{d(\widetilde
z_i,w_{i-1})}\right).\end{split}
\end{equation}

Since in the definition of a thick family of geodesics we have
$\sum_id(z_i,\widetilde z_i)\ge\alpha$, these inequalities show
that if we choose $z_i'$ to be the one of $z_i$ and $\widetilde
z_i$, for which $\frac{f(w_i)-f(z_i')}{d(w_i,z_i')}$ and
$\frac{f(z_i')-f(w_{i-1})}{d(z_i',w_{i-1})}$ are more distant from
each other, we have a chance to get the desired condition. (This
is what we verify below.)

We pick $z'_i$ to be $z_i$ if the left-hand side of \eqref{E:z} is
larger than the left-hand side of \eqref{E:zTilde}, and pick
$z'_i=\widetilde z_i$ otherwise.\medskip

Let us estimate $||M_2-M_1||_1$. First we estimate the part of
this difference corresponding to the interval
$\left({d(w_0,w_{i-1})},{d(w_0,w_i)}\right]$. Since the
restriction of $M_2$ to the interval
$\left({d(w_0,w_{i-1})},{d(w_0,w_i)}\right]$ is a two-valued
function, and $M_1$ is constant on the interval, the integral
\begin{equation}\label{E:IntOneInt}\int_{{d(w_0,w_{i-1})}}^{{d(w_0,w_{i})}}||M_2-M_1||dt\end{equation}
can be estimated from below in the following way. Denote the value
of $M_2$ on the first part of the interval by $x$, the value on
the second by $y$, the value of $M_1$ on the whole interval by
$z$, the length of the first interval by $A$ and of the second by
$B$. We have: the desired integral is equal to $A||x-z||+B||y-z||$
and therefore can be estimated in the following way:
\[\begin{split}A||x-z||+B||y-z||&\ge\max\{||x-z||,||y-z||\}\cdot\min\{A,B\}\\&\ge\frac12||x-y||\min\{A,B\}.\end{split}\]

Therefore, assuming without loss of generality that the left-hand
side of \eqref{E:z} is larger than the left-hand side of
\eqref{E:zTilde}, we can estimate the integral in
\eqref{E:IntOneInt} from below by
\[\begin{split}&\frac12\left\|\frac{(f(w_i)-f(z_i))}{d(w_i,z_i)}-\frac{(f(z_i)-f(w_{i-1}))}{d(z_i,w_{i-1})}\right\|
\cdot\min\left\{{d(w_i,z_i)},{d(z_i,w_{i-1})}\right\}
\\&\qquad\ge\frac14\,\ell\, d(z_i,\widetilde z_i)
\left(\frac1{d(w_i,z_i)}+\frac1{d(z_i,w_{i-1})}\right)\cdot\min\left\{{d(w_i,z_i)},{d(z_i,w_{i-1})}\right\}
\\&\qquad\ge\frac14\,\ell\,{d(z_i,\widetilde z_i)}.\end{split}
\]
Summing over all intervals and using the condition
$\sum_{i=1}^{m}d(z_i,\widetilde z_i)\ge\alpha$, we get
$||M_2-M_1||\ge \frac14\ell \alpha$.
\medskip

Now we recall that the last condition of the definition of a thick
family of geodesics implies that
\begin{equation}\label{E:listBefore(2)}w_0,
z'_1, w_1, z'_2, w_2,\dots, z'_m,w_m,\end{equation}  where each
$z'_i$ is either $z_i$ or $\widetilde z_i$ depending on the choice
made above, belongs to some geodesic in the family.

We use all of points in \eqref{E:listBefore(2)} as control points
and find new sequence $\{w_i^2\}_{i=0}^{m_2}$ of common points and
a new sequence of pairs $\{z_i^2,\widetilde z_i^2\}_{i=1}^{m_2}$
with substantial separation:  $\sum_{i=1}^{m_2}d(z_i^2,\widetilde
z_i^2)\ge\alpha$.

We use $\{w_i^2\}_{i=0}^{m_2}$  to construct $M_3$ and the
suitably selected sequence
\[w_0^2, z'^2_1, w_1^2, z'^2_2, w_2^2,\dots, z'^2_{m_2}, w_{m_2}^2\] to
construct $M_4$. We continue in an obvious way. The inequalities
$||M_{2k}-M_{2k-1}||\ge \frac14\ell \alpha$ imply that the
martingale is divergent.
\end{proof}

\subsection{Reflexivity}\label{S:Refl}

\begin{problem}\label{P:TestRefl} Is it possible to characterize the class of
reflexive spaces using test-spaces?
\end{problem}

Some comments on this problem:

\begin{remark} It is worth mentioning that a metric space (or spaces)
characterizing in the described sense reflexivity or the
Radon-Nikod\'{y}m property cannot be uniformly discrete (that is,
cannot satisfy $\inf_{u\ne v}d(u,v)>0$). This statement follows by
combining the example of Ribe \cite{Rib84} of Banach spaces
belonging to these classes which are uniformly homeomorphic to
Banach spaces which do not belong to the classes, and the
well-known fact (Corson-Klee \cite{CK63}) that uniformly
continuous maps are Lipschitz for (nontrivially) ``large''
distances.
\end{remark}

I noticed that combining two of the well-known characterizations
of reflexivity (Pt\'ak \cite{Pta59} - Singer \cite{Sin62} - Pe\l
czy\'nski \cite{Pel62} - James \cite{Jam64b} -
D.~Milman--V.~Milman \cite{MM65}) and some differentiation theory
(Mankiewicz \cite{Man73} -
 Christensen \cite{Chr73} - Aronszajn \cite{Aro76}, see also presentation in \cite{BL00}) we get a purely metric
characterization of reflexivity. This characterization can be
described as a submetric test-space characterization of
reflexivity:

\begin{definition} A \emph{submetric test-space} for a class $\mathcal{P}$ of Banach spaces is defined as a metric space $T$
with a marked subset $S\subset T\times T$ such that the following
conditions are equivalent for a Banach space $X$:

\begin{enumerate}

\item $X\notin\mathcal{P}$.

\item There exist a constant $0<C<\infty$ and an embedding $f:T
\to X$ satisfying the condition

\begin{equation}\label{E:PartBilipsch}
\forall (x,y)\in S\quad d_{T}(x,y)\le ||f(x)-f(y)||\le C
d_{T}(x,y).
\end{equation}
\end{enumerate}

An embedding satisfying \eqref{E:PartBilipsch} is called a
\emph{partially bilipschitz} embedding. Pairs $(x,y)$ belonging to
$S$ are called \emph{active}.
\end{definition}

Let $\Delta \ge 1$. The submetric space $X_\Delta$ is the space
$\ell_1$ with its usual metric. The only thing which makes it
different from $\ell_1$ is the set of active pairs $S_\Delta$: A
pair $(x,y)\in X_\Delta\times X_\Delta$ is active if and only if
\begin{equation}\label{E:DefXDelta} ||x-y||_1\le\Delta||x-y||_s,\end{equation} where $||\cdot||_s$
is the summing norm, that is,

\[||\{a_i\}_{i=1}^\infty||_s=\sup_k\left|\sum_{i=1}^k
a_i\right|.\]

\begin{theorem}[\cite{Ost14a}]\label{T:SubMetRef} $X_\Delta$, $\Delta\ge 2$ is a submetric test space
for reflexivity.
\end{theorem}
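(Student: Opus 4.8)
The plan is to prove the two implications of the test-space equivalence separately, using in both the classical summing-functional characterization of non-reflexivity of Pt\'ak--Singer--Pe\l czy\'nski--James--Milman--Milman: $X$ is non-reflexive if and only if for every $\theta\in(0,1)$ there are $(x_n)\subset B_X$ and $(h_k)\subset B_{X^*}$ with $h_k(x_n)=\theta$ for $n\le k$ and $h_k(x_n)=0$ for $n>k$. For the implication \emph{non-reflexive $\Rightarrow$ embedding}, I would fix $\theta$ close to $1$, take such $(x_n),(h_k)$, and define the \emph{linear} map $L:\ell_1\to X$ by $L(\sum a_n e_n)=\sum a_n x_n$. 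Then $\|La\|\le\|a\|_1$, while applying $h_k$ gives $\|La\|\ge\theta\sup_k|\sum_{i\le k}a_i|=\theta\|a\|_s$. Hence on an active pair, where $\|a\|_1\le\Delta\|a\|_s$, one gets $\|La\|\ge(\theta/\Delta)\|a\|_1$, so $L$ has distortion at most $\Delta/\theta$ on $S_\Delta$; rescaling by $\Delta/\theta$ yields a partially bilipschitz embedding. (This half in fact works for every $\Delta\ge1$.)

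The substance is the converse: a partially bilipschitz $f:X_\Delta\to X$ should force $X$ to be non-reflexive; equivalently, I would assume $X$ reflexive and derive a contradiction by linearizing $f$ and reading off a James configuration. Reflexive spaces have the \RNP, so by the Mankiewicz--Christensen--Aronszajn differentiation theory a Lipschitz map into $X$ is G\^ateaux differentiable off a Gauss-null (Aronszajn-null) set. Carrying this out on $f$ should produce, at a generic base point, a bounded linear operator $T$ with $Te_n=:\xi_n$ satisfying $\|v\|_1\le\|Tv\|\le C\|v\|_1$ for every \emph{active} direction $v$ (activeness is scale-invariant, so it passes to difference quotients). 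In particular $\|\xi_n\|\le C$, so $(\xi_n)$ is a bounded sequence.

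This is exactly where the threshold $\Delta\ge2$ enters. For any $n$ and any convex coefficients, the vector $a=\sum_{i\le n}\lambda_i e_i-\sum_{i>n}\mu_i e_i$ has partial sums that rise monotonically from $0$ to $1$ and then fall monotonically back to $0$; such a unimodal path has total variation $2$ and peak $1$, so $\|a\|_1=2=2\|a\|_s$, and $a$ is active precisely when $\Delta\ge2$. Consequently
\[\Big\|\sum_{i\le n}\lambda_i\xi_i-\sum_{i>n}\mu_i\xi_i\Big\|=\|Ta\|\ge\|a\|_1=2,\]
so the head and tail convex hulls of the bounded sequence $(\xi_n)$ remain at distance $\ge2$. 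That is precisely a James-separated sequence, contradicting the characterization of reflexivity; hence no embedding exists when $X$ is reflexive.

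The hard part will be the differentiation step, because $f$ is controlled \emph{only} on active pairs, and the active directions do not form a linear subspace; indeed arbitrarily close non-active pairs exist, so $f$ is neither Lipschitz nor even continuous on any $\ell_1$-open set, and the Mankiewicz--Christensen--Aronszajn theorems cannot be applied off the shelf. My plan to handle this is to restrict $f$ to a convex set (or cone) all of whose difference directions are active --- on such a set $f$ is genuinely $C$-Lipschitz into the separable \RNP\ space $X$ --- to differentiate there, and then to verify that the resulting directional derivative is additive across exactly the active directions needed to realize the head/tail vectors $a$ above. Establishing this linearity from one-dimensional Lipschitz data, and choosing the convex set rich enough to contain all the required unimodal directions, is the delicate core of the argument; the two norm estimates and the James contradiction are then routine.
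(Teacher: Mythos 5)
Your first half is exactly the paper's (the characterization the paper alludes to is precisely the Pt\'ak--Singer--Pe\l czy\'nski--James--Milman one you use), and the skeleton of your converse --- G\^ateaux differentiation via Mankiewicz--Christensen--Aronszajn, a derivative bounded below on active directions, the head-minus-tail vectors explaining the threshold $\Delta\ge 2$, and the James separation criterion --- also matches the paper. The gap is in what you call the ``delicate core,'' and it is created by a simple observation you missed: a partially bilipschitz $f$ \emph{is} automatically $C$-Lipschitz on all of $\ell_1$. Indeed, if $a\ge 0$ coordinatewise then $\|a\|_s=\|a\|_1$, so every pair whose difference is a nonnegative vector is active (for any $\Delta\ge 1$); and for arbitrary $x,y$ the intermediate point $z=x-(x-y)^+$ satisfies $x-z=(x-y)^+\ge 0$ and $y-z=(x-y)^-\ge 0$, whence
\[
\|f(x)-f(y)\|\le \|f(x)-f(z)\|+\|f(z)-f(y)\|\le C\left(\|(x-y)^+\|_1+\|(x-y)^-\|_1\right)=C\|x-y\|_1 .
\]
So your claim that $f$ is ``neither Lipschitz nor even continuous on any $\ell_1$-open set'' is false. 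The differentiation theorem therefore applies off the shelf to $f$ on the separable space $\ell_1$ (mapping into a space with the RNP), the G\^ateaux derivative $T$ at a generic point is an everywhere-defined bounded linear operator with $\|T\|\le C$, and your scale-invariance remark gives $\|Ta\|\ge\|a\|_1$ for every active direction $a$ --- in particular for all your unimodal vectors. Your James contradiction then finishes the proof exactly as you wrote it; this is the paper's argument.

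Moreover, the workaround you propose instead cannot be repaired: no convex set whose difference directions are all active can contain all the unimodal directions you need. If $K$ is convex then $K-K$ is convex, but the active cone is not, and your own class of directions witnesses this. For $\Delta=2$ take $u=(4,-3,0,-1,0,0,\dots)$ and $v=(0,1,3,-4,0,0,\dots)$: both are head-minus-tail vectors with $\|u\|_1=2\|u\|_s=8$ and $\|v\|_1=2\|v\|_s=8$, hence active, yet $u+v=(4,-2,3,-5,0,0,\dots)$ has $\|u+v\|_1=14>10=2\|u+v\|_s$, hence is not active. So any convex set containing segments in both directions $u$ and $v$ necessarily contains non-active difference directions, on which you have no Lipschitz control; the plan of differentiating on a single ``rich enough'' active convex set, and then proving additivity across cones, is genuinely blocked rather than merely delicate. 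The missed positive-part decomposition is not an optional shortcut --- it is the step that makes the differentiation argument possible at all.
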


The proof goes as follows. Let $Z$ be a non-reflexive space. If
you know the characterization of reflexivity which I meant, you
see immediately that it implies that the space $\ell_1$ admits a
partially bilipschitz embedding into $Z$ with the set of active
pairs described as above.

The other direction. If $\ell_1$ admits a partially bilipschitz
embedding with the described set of active pairs, then the
embedding is Lipschitz on $\ell_1$, because each vector in
$\ell_1$ is a difference of two positive vectors.

Now, if $Z$ does not have the Radon-Nikod\'ym property (RNP), we
are done ($Z$ is nonreflexive). If $Z$ has the RNP, we use the
result of Mankiewicz-Christensen-Aronszajn and find a point of
G\^ateaux differentiability of this embedding. The G\^ateaux
derivative is a bounded linear operator which is ``bounded below
in certain directions''. Using this we can get a sequence in $Z$
which, after application of the non-reflexivity criterion (due to
Pt\'ak - Singer - Pe\l czy\'nski - James - D.\&V.~Milman), implies
non-reflexivity of $Z$. See \cite{Ost14a} for details.

\subsection{Infinite tree property}

See Definition \ref{D:DelTrees} for the definition of  the
infinite tree property. Using a bounded $\delta$-tree in a Banach
space $X$ one can easily construct a bounded divergent $X$-valued
martingale. Hence the infinite tree property implies non-\RNP. For
some time it was an open problem whether the infinite tree
property coincides with non-\RNP. A counterexample was constructed
by Bourgain and Rosenthal \cite{BR80} in the paper mentioned
above. The infinite tree property admits the following metric
characterization.
\medskip

\begin{theorem}[\cite{Ost14a}]\label{T:SubInfTree} The class of Banach spaces with the infinite tree
property admits a submetric characterization in terms of the
metric space $D_\omega$ with the set $S_\omega$ of active pairs
defined as follows: a pair is active if and only if it is a pair
of vertices of a quadrilateral introduced in one of the steps.
\end{theorem}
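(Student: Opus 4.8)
The plan is to prove the equivalence that the theorem asserts: a Banach space $X$ has the infinite tree property if and only if there exist $C<\infty$ and a map $f:D_\omega\to X$ satisfying $d(x,y)\le\|f(x)-f(y)\|\le C\,d(x,y)$ for every active pair $(x,y)\in S_\omega$ (here I take $\mathcal P$ to be the class of spaces \emph{without} the infinite tree property, so that ``$X\notin\mathcal P$'' reads as ``$X$ \emph{has} the infinite tree property''). The geometric observation driving both directions is that in the weighted diamond each middle vertex $a$ of a quadrilateral $u,a,v,b$ introduced at step $n$ is a \emph{metric midpoint} of the diagonal $\{u,v\}$, since $d(u,a)=d(a,v)=2^{-n}=\tfrac12 d(u,v)$; dually $u,v$ are metric midpoints of $\{a,b\}$. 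Consequently, along any source-to-sink geodesic the ``velocity'' vectors $\tfrac{f(\mathrm{end})-f(\mathrm{start})}{\mathrm{length}}$ of the dyadic sub-edges \emph{telescope}: if an edge $\eta$ of length $2^{-(n-1)}$ is split through a middle vertex into sub-edges $\eta_0,\eta_1$, then $\tfrac12(v_{\eta_0}+v_{\eta_1})=v_\eta$ exactly. This is precisely the midpoint relation \eqref{E:TreeConditions} defining a $\delta$-tree.

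For the direction ``embedding $\Rightarrow$ infinite tree property'', I would fix one source-to-sink geodesic and build a tree indexed by its dyadic refinement (the root is the velocity of the $D_0$-edge, and the two children of a sub-edge are the velocities of its two halves). The midpoint relation holds by the telescoping identity. Boundedness $\|x_\tau\|\le C$ follows from the upper bilipschitz bound on the side and diagonal active pairs. For the separation, note that $\{a,b\}$ is an active pair with $\|f(a)-f(b)\|\ge d(a,b)=2^{-(n-1)}$; writing $m=\tfrac12(f(u)+f(v))$ one gets $\|f(a)-m\|+\|f(b)-m\|\ge\|f(a)-f(b)\|\ge 2^{-(n-1)}$, so at least one middle vertex lies at distance $\ge\tfrac12\,2^{-(n-1)}$ from $m$. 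Refining through that vertex makes each velocity increment $\|x_\tau-x_{\sigma_i}\|=2\|f(a)-m\|/2^{-(n-1)}\ge 1$; since $x_\tau$ is the exact midpoint of its children, both increments are equal and $\ge1$. Thus $X$ contains a bounded $1$-tree and has the infinite tree property.

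For the converse ``infinite tree property $\Rightarrow$ embedding'', I would integrate a given bounded $\delta$-tree $\{x_\tau\}$ with $\|x_\tau\|\le R$. The essential difficulty is that the four side active pairs of each quadrilateral require a lower bilipschitz bound, whereas the raw tree values may have arbitrarily small norm. I resolve this by first translating the tree: replacing $x_\tau$ by $x'_\tau=x_\tau+e$ for a fixed $e\in X$ with $\|e\|=2R$ leaves the midpoint relations, the separations $\ge\delta$, and boundedness intact, while forcing $\|x'_\tau\|\in[R,3R]$. I then define $f$ on the vertex set of each $D_n$ recursively: the $D_0$-edge receives velocity $x'_\emptyset$, and when an edge labelled by $\tau$ is replaced by a quadrilateral I assign its two middle vertices by using the children $x'_{\tau0},x'_{\tau1}$ as the sub-edge velocities in the two possible orders (through $a$: $x'_{\tau0}$ then $x'_{\tau1}$; through $b$: $x'_{\tau1}$ then $x'_{\tau0}$). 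The telescoping identity makes $f$ path-independent, hence well defined, and a direct check of the six active pairs of each step-$n$ quadrilateral yields velocities in $[R,3R]$ for the four sides and the $\{u,v\}$-diagonal, and velocity $\ge\delta$ (from $\|x'_{\tau0}-x'_{\tau1}\|=2\|x'_{\tau0}-x'_\tau\|\ge2\delta$) for the $\{a,b\}$-diagonal. After rescaling $f$ by $1/\min\{R,\delta\}$ every active pair satisfies \eqref{E:PartBilipsch} with $C=3R/\min\{R,\delta\}$; extending $f$ Lipschitz-ly over the edges and to the completion $D_\omega$ (legitimate since the active pairs are vertex pairs) completes the construction.

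The step I expect to be most delicate is obtaining the \emph{exact} midpoint relation required by a $\delta$-tree out of a merely bilipschitz embedding, since a generic distortion-$C$ map destroys affine midpoints. The telescoping-velocity observation is what circumvents this, trading ``points close to midpoints'' for ``velocities that average exactly'', while the role of the $\{a,b\}$-diagonal active pair is to convert the metric separation of the two middle vertices into a uniform lower bound on the velocity increments. The second delicate point, the possible degeneracy of the side velocities in the converse direction, is handled by the translation trick, whose only cost is an enlargement of the distortion constant $C$.
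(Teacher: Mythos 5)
Your proposal is correct, and although the paper itself states Theorem \ref{T:SubInfTree} without proof (deferring to \cite{Ost14a}), your argument is exactly the differentiation/integration method the paper deploys for the analogous \RNP\ characterization in Section \ref{S:RNPProof}: in one direction you pass to difference quotients $\frac{f(q)-f(p)}{d(p,q)}$ along dyadic sub-edges (your ``velocities''), which satisfy the midpoint relation of Definition \ref{D:DelTrees} exactly and whose separation $\ge 1$ is extracted from the lower bound on the $\{a,b\}$ diagonal, and in the other direction you integrate the $\delta$-tree along the diamonds, with the translation by $e$, $\|e\|=2R$, supplying the lower bounds on the four side pairs. The only phrasing to repair is the opening ``fix one source-to-sink geodesic'': as your own separation argument makes clear, the middle vertex ($a$ or $b$) through which each sub-edge is refined must be chosen adaptively in terms of $f$, so the refining geodesic is constructed step by step rather than fixed in advance --- and since you do make that adaptive choice explicitly, the proof stands as written.
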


It would be interesting to answer the following open problem:

\begin{problem} Whether the infinite diamond $D_\omega$ is a
test-space for the infinite tree property?
\end{problem}

\begin{remark} It is worth mentioning that if we restrict our
attention to {\bf dual Banach spaces}, the following three
properties are equivalent:

\begin{itemize}

\item[(1)] Non-\RNP.

\item[(2)] Infinite tree property.

\item[(3)] Bilipschitz embeddability of $D_\omega$.
\end{itemize}

The implication (1) $\Rightarrow$ (2) is due to Stegall
\cite{Ste75}. The implication (2) $\Rightarrow$ (1) follows from
Chatterji \cite{Cha68}. The equivalence of (1) and (3) was proved
in \cite{Ost14a}.\end{remark}

\section{Acknowledgements}

The author gratefully acknowledges the support by NSF DMS-1201269.

The author would like to thank Gilles Lancien for the invitation
to give a series of five lectures at ``Autumn School on Nonlinear
Geometry of Banach Spaces and Applications'' (M\'etabief, France,
October 2014). This paper was prepared on the basis of notes of
that lecture series. The same lecture series was also presented at
the seminar at St. John's University. The author would like to
thank the participants of both the school and the seminar for the
numerous questions, corrections and comments, many of which were
incorporated into the final version of the paper. Also the author
would like to thank Gideon Schechtman and Hannes Thiel for very
useful comments on the first draft of this paper.

\renewcommand{\refname}{\section{References}}

\end{large}


\begin{thebibliography}{WWW99+}

\begin{small}

\bibitem[AB+91]{AB+91} J.\,M.~Alonso, T.~Brady, D.~Cooper, V.~Ferlini, M.~Lustig, M.~Mihalik, M.~Shapiro, H.~Short, Notes on
word hyperbolic groups. Edited by Short. In: {\it Group theory
from a geometrical viewpoint} (Trieste, 1990), 3--63, World Sci.
Publ., River Edge, NJ, 1991.

\bibitem[Aro76]{Aro76} N.~Aronszajn, Differentiability of Lipschitzian mappings
between Banach spaces, {\it Studia Math.}, {\bf 57} (1976), no. 2,
147--190.

\bibitem[Bal13]{Bal13} K.~Ball, The Ribe programme. S\'eminaire Bourbaki. Vol. 2011/2012. Expos\'es 1043--1058.
Ast\'erisque No. {\bf 352} (2013), Exp. No. 1047, viii, 147--159.

\bibitem[Bau07]{Bau07} F.~Baudier, Metrical characterization of
super-reflexivity and linear type of Banach spaces, {\it Archiv
Math.},  {\bf  89}  (2007),  no. 5, 419--429.

\bibitem[Bau12]{Bau12} F.~Baudier,  Embeddings of proper metric spaces into Banach
spaces, {\it Houston J. Math.}, {\bf 38} (2012), no. 1, 209--223.

\bibitem[BKL10]{BKL10} F.~Baudier, N.\,J.~Kalton, G.~Lancien, A new metric
invariant for Banach spaces, {\it Studia Math.} {\bf 199} (2010),
no. 1, 73--94.

\bibitem[BL08]{BL08} F.~Baudier, G.~Lancien, Embeddings of
locally finite metric spaces into Banach spaces, {\it Proc. Amer.
Math. Soc.}, {\bf 136} (2008), 1029--1033.

\bibitem[Beg99]{Beg99} B.~Begun, A remark on almost extensions of Lipschitz functions,
{\it Israel J. Math.}, {\bf 109} (1999), 151--155.

\bibitem[Bel82]{Bel82} S.\,F.~Bellenot, Transfinite duals of quasireflexive Banach
spaces. {\it Trans. Amer. Math. Soc.} {\bf 273} (1982), no. 2,
551--577.

\bibitem[BS97]{BS97} I.~Benjamini, O.~Schramm, Every graph with a positive Cheeger
constant contains a tree with a positive Cheeger constant, {\it
Geom. Funct. Anal.} {\bf 7} (1997), no. 3, 403--419.

\bibitem[BL00]{BL00} Y.~Benyamini, J.~Lindenstrauss, {\it Geometric nonlinear
functional analysis}. Vol. {\bf 1}. American Mathematical Society
Colloquium Publications, {\bf 48}. American Mathematical Society,
Providence, RI, 2000.

\bibitem[Bou79]{Bou79} J.~Bourgain, {\it La propri\'et\'e de Radon-Nikodym}, Publications math\'ematiques
de l'Universit\'e Pierre et Marie Curie No.~{\bf 36}, Paris, 1979.

\bibitem[Bou86]{Bou86} J.~Bourgain, The metrical interpretation of superreflexivity in
Banach spaces, {\it Israel J. Math.}, {\bf 56} (1986), no. 2,
222--230.

\bibitem[Bou87]{Bou87} J.~Bourgain,  Remarks on the extension of Lipschitz maps defined on
discrete sets and uniform homeomorphisms, in: {\it Geometrical
aspects of functional analysis} (1985/86), 157--167, {\it Lecture
Notes in Math.}, {\bf 1267}, Springer, Berlin, 1987.

\bibitem[BMW86]{BMW86} J.~Bourgain, V.~Milman, H.~Wolfson, On type of
metric spaces, {\it Trans. Amer. Math. Soc.}, {\bf 294} (1986),
no. 1, 295--317.

\bibitem[BR80]{BR80} J.~Bourgain, H.\,P.~Rosenthal, Martingales valued in certain
subspaces of $L^{1}$, {\it Israel J. Math.}, {\bf 37} (1980), no.
1-2, 54--75.

\bibitem[Bou83]{Bou83} R.\,D.~Bourgin, {\it Geometric aspects of convex sets with the Radon-Nikod\'ym
property}, Lecture Notes in Mathematics, {\bf 993},
Springer-Verlag, Berlin, 1983.

\bibitem[BH99]{BH99} M.\,R.~Bridson, A.~Haefliger, {\it Metric
spaces of non-positive curvature}, Grundlehren der Mathematischen
Wissenschaften, {\bf 319}, Springer-Verlag, Berlin, 1999.

\bibitem[BG05]{BG05} N.~Brown, E.~Guentner, Uniform
embeddings of bounded geometry spaces into reflexive Banach space,
{\it Proc. Amer. Math. Soc.}, {\bf 133} (2005), no. 7, 2045--2050.

\bibitem[BS75]{BS75} A.~Brunel, L.~Sucheston,
On $J$-convexity and some ergodic super-properties of Banach
spaces. {\it Trans. Amer. Math. Soc.} {\bf 204} (1975), 79--90.

\bibitem[BS76]{BS76} A.~Brunel, L.~Sucheston, Equal signs additive
sequences in Banach spaces. {\it J. Funct. Anal.} {\bf 21} (1976),
no. 3, 286--304.

\bibitem[BDS07]{BDS07} S.~Buyalo, A.~Dranishnikov, V.~Schroeder,
Embedding of hyperbolic groups into products of binary trees, {\it
Invent. Math.}, {\bf  169}  (2007),  no. 1, 153--192.

\bibitem[CDPT07]{CDPT07} L.~Capogna, D.~Danielli, S.\,D.~Pauls, J.\,T.~Tyson,
{\it An introduction to the Heisenberg group and the
sub-Riemannian isoperimetric problem}. Progress in Mathematics,
{\bf 259}. Birkh\"auser Verlag, Basel, 2007.


\bibitem[Cha68]{Cha68} S.\,D.~Chatterji,
Martingale convergence and the Radon-Nikodym theorem in Banach
spaces, {\it Math. Scand.}, {\bf 22} (1968) 21--41.

\bibitem[Che99]{Che99} J.~Cheeger,
Differentiability of Lipschitz functions on metric measure spaces,
{\it Geom. Funct. Anal.}, {\bf 9} (1999), no. 3, 428--517.

\bibitem[CK06]{CK06} J.~Cheeger, B.~Kleiner,
On the differentiability of Lipschitz maps from metric measure
spaces to Banach spaces, in: {\it Inspired by S.\,S.~Chern},
129--152, Nankai Tracts Math., {\bf 11}, World Sci. Publ.,
Hackensack, NJ, 2006.

\bibitem[CK09]{CK09} J.~Cheeger, B.~Kleiner, Differentiability of Lipschitz
maps from metric measure spaces to Banach spaces with the
Radon-Nikod\'ym property, {\it  Geom. Funct. Anal.}, {\bf  19}
(2009), no. 4, 1017--1028

\bibitem[CK10]{CK10} J.~Cheeger, B.~Kleiner,
Differentiating maps into $L^1$, and the geometry of BV functions,
{\it Ann. of Math.} (2) {\bf 171} (2010), no. 2, 1347--1385; {\tt
arXiv:math.MG/0611954}.

\bibitem[CK13]{CK13} J.~Cheeger, B.~Kleiner, Realization of metric spaces as inverse limits, and
bilipschitz embedding in $L_1$, {\it  Geom. Funct. Anal.}, {\bf
23} (2013), 96--133, {\tt arXiv:1110.2406}

\bibitem[CKN11]{CKN11} J.~Cheeger, B.~Kleiner, A.~Naor,
Compression bounds for Lipschitz maps from the Heisenberg group to
$L_1$, {\it Acta Math.}, {\bf 207} (2011), 291--373.


\bibitem[Chr73]{Chr73} J.\,P.\,R.~Christensen, Measure theoretic zero sets in
infinite dimensional spaces and applications to differentiability
of Lipschitz mappings. Actes du Deuxi\`eme Colloque d'Analyse
Fonctionnelle de Bordeaux (Univ. Bordeaux, 1973), I, pp. 29--39.
{\it Publ. D\'ep. Math.} (Lyon) {\bf 10} (1973), no. 2, 29--39.


\bibitem[Cla36]{Cla36} J.\,A.~Clarkson, Uniformly convex spaces, {\it Trans. Amer. Math.
Soc.}, {\bf 40} (1936), no. 3, 396--414.

\bibitem[CTV07]{CTV07} Y.~de Cornulier, R.~Tessera, A.~Valette, Isometric
group actions on Hilbert spaces: growth of cocycles. {\it Geom.
Funct. Anal.} {\bf 17} (2007), no. 3, 770--792.

\bibitem[CK63]{CK63} H.~Corson, V.~Klee,
Topological classification of convex sets, {\it Proc. Sympos. Pure
Math.}, Vol. {\bf VII} pp.~37--51, Amer. Math. Soc., Providence,
R.I., 1963.

\bibitem[DJL76]{DJL76} W.\,J.~Davis, W.\,B.~Johnson,
J.~Lindenstrauss, The $\ell^{n}_{1}$ problem and degrees of
non-reflexivity, {\it Studia Math.}, {\bf 55} (1976), no. 2,
123--139.

\bibitem[DL76]{DL76} W.\,J.~Davis,
J.~Lindenstrauss, The $\ell^{n}_{1}$ problem and degrees of
non-reflexivity. II, {\it Studia Math.}, {\bf 58}  (1976), no. 2,
179--196.

\bibitem[DU77]{DU77} J.~Diestel,  J.\,J.~Uhl, Jr.  {\it Vector measures},
With a foreword by B. J. Pettis. Mathematical Surveys, No. 15.
American Mathematical Society, Providence, R.I., 1977.

\bibitem[DKLR14]{DKLR14}
S.\,J.~Dilworth, D.~Kutzarova, G.~Lancien, N.\,L.~Randrianarivony,
Asymptotic geometry of Banach spaces and uniform quotient maps.
{\it Proc. Amer. Math. Soc.} {\bf 142} (2014), no. 8, 2747--2762.

\bibitem[Dul85]{Dul85} D.~van Dulst, The geometry of Banach spaces with the
Radon-Nikod\'ym property. {\it Rend. Circ. Mat. Palermo} (2) 1985,
Suppl. No. 7, ix+81~pp.

\bibitem[Enf72]{Enf72} P.~Enflo, Banach spaces which can be given an equivalent
uniformly convex norm, {\it Israel J. Math}, {\bf 13} (1972),
281--288.

\bibitem[Enf76]{Enf76} P.~Enflo, Uniform homeomorphisms between Banach spaces, {\it
S\'eminaire d'Analyse Fonctionnelle} (1975--1976), Exp. No. 18,
6~pp., \'Ecole Polytech., Palaiseau, 1976.


\bibitem[Enf78]{Enf78} P.~Enflo, On infinite-dimensional topological groups.
{\it S\'eminaire sur la G\'eom\'etrie des Espaces de Banach}
(1977--1978), Exp. No. 10--11, 11 pp., \'Ecole Polytech.,
Palaiseau, 1978.

\bibitem[Gel38]{Gel38} I.~Gelfand, Abstrakte Funktionen und lineare Operatoren. {\it
Mat. Sb.,} Vol. {\bf 4 (46)} (1938), no. 2, 235--286. Available at
{\tt http://www.mathnet.ru/}.

\bibitem[GMN11]{GMN11} O. Giladi, M.~Mendel, A. Naor,
Improved bounds in the metric cotype inequality for Banach spaces,
{\it J. Funct. Anal.} {\bf 260} (2011), 164--194.

\bibitem[GN10]{GN10}  O.~Giladi, A.~Naor, Improved bounds in the scaled Enflo type inequality for Banach
spaces, {\it Extracta Math.}, {\bf 25} (2010), no. 2, 151--164.

\bibitem[GNS12]{GNS12} O.~Giladi, A.~Naor, G.~Schechtman, Bourgain's discretization
theorem, {\it Annales Mathematiques de la faculte des sciences de
Toulouse}, vol. {\bf XXI} (2012), no. 4, 817--837.

\bibitem[GM93]{GM93} W.\,T.~Gowers, B.~Maurey,
The unconditional basic sequence problem, {\it J. Amer. Math.
Soc.}, {\bf 6} (1993), no. 4, 851--874.

\bibitem[Gro87]{Gro87} M.~Gromov, Hyperbolic groups, in: {\it Essays in group theory}, 75--263, Math. Sci. Res. Inst. Publ., {\bf 8},
Springer, New York, 1987.

\bibitem[Gro96]{Gro96} M.~Gromov, Carnot-Carath\'eodory spaces seen from
within. In: {\it Sub-Riemannian geometry}, Edited by
A.~Bella\"iche, J.-J.~Risler, 79--323, Progr. Math., {\bf 144},
Birkh\"auser, Basel, 1996.

\bibitem[Gro03]{Gro03} M.~Gromov,  Random walk in random groups, {\it Geom. Funct. Anal.},  {\bf 13}  (2003),  no. 1, 73--146.


\bibitem[Gup01]{Gup01} A.~Gupta, Steiner points in tree metrics don't (really)
help. {\it
 Proceedings of the Twelfth Annual ACM-SIAM Symposium on Discrete Algorithms} (Washington, DC, 2001), 220--227, SIAM, Philadelphia, PA, 2001.

\bibitem[GNRS04]{GNRS04} A.~Gupta, I.~Newman, Y.~Rabinovich,
A.~Sinclair, Cuts, trees and $\ell_1$-embeddings of graphs, {\it
Combinatorica}, {\bf 24} (2004) 233--269; Conference version in:
{\it 40th Annual IEEE Symposium on Foundations of Computer
Science}, 1999, pp.~399--408.

\bibitem[Hei01]{Hei01}
J.\,M.~Heinonen, {\it Lectures on Analysis on Metric Spaces},
Universitext, Springer-Verlag, New York, 2001.

\bibitem[HM82]{HM82} S.~Heinrich, P.~Mankiewicz, Applications of ultrapowers to the
uniform and Lipschitz classification of Banach spaces, {\it Studia
Math.}, {\bf 73} (1982), no. 3, 225--251.

\bibitem[Jam64a]{Jam64a} R.\,C.~James, Uniformly non-square Banach spaces, {\it Annals of
Math.}, {\bf 80} (1964), 542--550.

\bibitem[Jam64b]{Jam64b} R.\,C.~James,  Weak compactness and reflexivity, {\it Israel J. Math.}, {\bf
2} (1964), 101--119.

\bibitem[Jam72a]{Jam72a} R.\,C.~James, Some self-dual properties of normed linear spaces, in: {\it Symposium
on Infinite-Dimensional Topology} (Louisiana State Univ., Baton
Rouge, La., 1967),  pp. 159--175. Ann. of Math. Studies, No. {\bf
69}, Princeton Univ. Press, Princeton, N.\,J., 1972.

\bibitem[Jam72b]{Jam72b} R.\,C.~James,
Super-reflexive Banach spaces, {\it Canad. J. Math.}, {\bf 24}
(1972), 896--904.

\bibitem[Jam81]{Jam81} R.\,C.~James, Structure of Banach spaces: Radon-Nikod\'ym and other properties.
{\it General topology and modern analysis} (Proc. Conf., Univ.
California, Riverside, Calif., 1980), pp.~347--363, Academic
Press, New York-London, 1981.

\bibitem[JS09]{JS09} W.\,B.~Johnson, G. Schechtman,
Diamond graphs and super-reflexivity, {\it J. Topol. Anal.}, {\bf
1} (2009), no. 2, 177--189.

\bibitem[Kei04]{Kei04} S.~Keith. A differentiable structure for
metric measure spaces,  {\it Adv. Math.} {\bf 183} (2004), no. 2,
271--315.

\bibitem[KM11]{KM11} B.~Kleiner, J.~Mackay,  Differentiable structures on metric measure spaces: a
primer, {\tt arXiv:1108.1324}.

\bibitem[Klo14]{Klo14} B.~Kloeckner, Yet another short proof of the Bourgain's distortion estimate
for embedding of trees into uniformly convex Banach spaces, {\it
Israel J. Math.},  {\bf 200} (2014), no. 1, 419--422; DOI:
10.1007/s11856-014-0024-4.


\bibitem[Kwa72]{Kwa72} S.~Kwapie\'n, Isomorphic characterizations of inner product
spaces by orthogonal series with vector valued coefficients, {\it
Studia Math.}, {\bf 44} (1972), 583--595.

\bibitem[Laa00]{Laa00} T.\,J.~Laakso, Ahlfors $Q$-regular spaces with arbitrary $Q >
1$ admitting weak Poincare inequality, {\it Geom. Funct. Anal.},
{\bf 10} (2000), no. 1, 111--123.

\bibitem[Lan13]{Lan13} G.~Lancien, A short course on nonlinear geometry of Banach
spaces. {\it  Topics in functional and harmonic analysis},
77--101, Theta Ser. Adv. Math., {\bf 14}, Theta, Bucharest, 2013.

\bibitem[LP01]{LP01} U.~Lang, C.~Plaut, Bilipschitz embeddings of metric
spaces into space forms, {\it Geom. Dedicata}, {\bf 87}  (2001),
285--307.

\bibitem[LN06]{LN06} J.\,R.~Lee, A.~Naor, $L_p$ metrics on the Heisenberg group and the
Goemans-Linial conjecture, in: {\it Proceedings of the 47th Annual
IEEE Symposium on Foundations of Computer Science}, IEEE,
Piscataway, NJ, 2006, pp. 99--108.

\bibitem[LNP09]{LNP09} J.\,R.~Lee, A.~Naor,  Y.~Peres, Trees and Markov
convexity, {\it Geom. Funct. Anal.}, {\bf 18} (2009) 1609--1659;
Conference version: {\it Proceedings of the Seventeenth Annual
ACM-SIAM Symposium on Discrete Algorithms}, 1028--1037, ACM, New
York, 2006, {\tt arXiv:0706.0545}.

\bibitem[Li14]{Li14} S.~Li, Coarse differentiation and quantitative nonembeddability for
Carnot groups, {\it J. Funct. Anal.} {\bf 266} (2014), 4616--4704.

\bibitem[Li14+]{Li14+} S.~Li, Markov convexity and nonembeddability of the Heisenberg
group, {\tt arXiv: 1404.6751}.

\bibitem[LP68]{LP68} J.~Lindenstrauss, A.~Pe\l czy\'nski, Absolutely summing operators
in $\mathcal{L}_p$-spaces and their applications. Studia Math.
{\bf 29} (1968), 275--326.

\bibitem[LLR95]{LLR95} N.~Linial, E.~London,
Y.~Rabinovich, The geometry of graphs and some of its algorithmic
applications, {\it Combinatorica}, {\bf 15} (1995), no. 2,
215--245.


\bibitem[Man73]{Man73} P.~Mankiewicz, On the differentiability of Lipschitz
mappings in Fr\'echet spaces, {\it Studia Math.}, {\bf 45} (1973),
15--29.

\bibitem[Mat99]{Mat99} J.~Matou\v sek, On embedding trees into
uniformly convex Banach spaces, {\it Israel J. Math.}, {\bf 114}
(1999), 221--237.

\bibitem[Mat02]{Mat02} J.~Matou\v sek, Lectures on discrete geometry. Graduate Texts in Mathematics, {\bf 212}. Springer-Verlag, New York, 2002.

\bibitem[MU32]{MU32} S.~Mazur, S.~Ulam, Sur les transformationes
isom\'etriques d'espaces vectoriels norm\'es, {\it C. R. Acad.
Sci. Paris} {\bf 194}  (1932), 946--948.

\bibitem[MN07]{MN07} M.~Mendel, A. Naor, Scaled Enflo type is equivalent to Rademacher
type, {\it Bull. London Math. Soc.}, {\bf 39} (2007), 493--498.

\bibitem[MN08]{MN08} M.~Mendel, A.~Naor, Metric cotype, {\it Ann.
Math.}, {\bf 168} (2008), 247--298.

\bibitem[MN13]{MN13}  M.~Mendel, A.~Naor, Markov convexity and local rigidity
of distorted metrics, {\it  J. Eur. Math. Soc.} (JEMS), {\bf 15}
(2013), no. 1, 287--337; Conference version: {\it Computational
geometry} (SCG'08), 49--58, ACM, New York, 2008.

\bibitem[MM65]{MM65} D.\,P.~Milman,  V.\,D.~Milman,  The geometry of nested families with
empty intersection. Structure of the unit sphere of a nonreflexive
space (Russian), {\it Matem. Sbornik}, {\bf 66} (1965), no.  1,
109--118; English transl.: {\it Amer.  Math.  Soc.  Transl.}  (2)
v.~{\bf 85} (1969), 233--243.

\bibitem[Mon02]{Mon02} R.~Montgomery, {\it A tour of subriemannian geometries, their geodesics and
applications}, Mathematical Surveys and Monographs, {\bf 91},
American Mathematical Society, Providence, RI, 2002.

\bibitem[Nao12]{Nao12} A.~Naor,
An introduction to the Ribe program, {\it Jpn. J. Math.}, {\bf 7}
(2012), no.~2, 167--233.

\bibitem[NY12]{NY12} P.\,W.~Nowak, G.~Yu, {\it Large scale
geometry.} EMS Textbooks in Mathematics. European Mathematical
Society (EMS), Z\"urich, 2012.

\bibitem[Osa14+]{Osa14+} D.~Osajda, Small cancellation labellings of some infinite
graphs and applications, {\tt arXiv:1406.5015}.

\bibitem[Ost04]{Ost04} M.\,I.~Ostrovskii, Finite dimensional characteristics related to superreflexivity of
Banach spaces, {\it Bull. Aust. Math. Soc.}, {\bf 69} (2004),
289--295.

\bibitem[Ost06]{Ost06} M.\,I.~Ostrovskii, Coarse embeddings of locally
finite metric spaces into Banach spaces without cotype, {\it C. R.
Acad. Bulgare Sci.}, {\bf 59} (2006), no. 11, 1113--1116.

\bibitem[Ost09]{Ost09} M.\,I.~Ostrovskii, Coarse embeddability into Banach spaces, {\it Topology Proc.},
{\bf 33} (2009), 163--183.

\bibitem[Ost11]{Ost11} M.\,I.~Ostrovskii, On metric characterizations of some classes of Banach
spaces, {\it  C. R. Acad. Bulgare Sci.}, {\bf 64} (2011), no. 6,
775--784.

\bibitem[Ost12]{Ost12} M.\,I.~Ostrovskii,
Embeddability of locally finite metric spaces into Banach spaces
is finitely determined, {\it Proc. Amer. Math. Soc.}, {\bf 140}
(2012), 2721--2730.

\bibitem[Ost13a]{Ost13a} M.\,I.~Ostrovskii, Metric Embeddings: Bilipschitz and Coarse Embeddings into Banach Spaces,
de Gruyter Studies in Mathematics, {\bf 49}. Walter de Gruyter \&\
Co., Berlin, 2013.

\bibitem[Ost13b]{Ost13b} M.\,I.~Ostrovskii, Different forms of metric
characterizations of classes of Banach spaces, {\it Houston. J.
Math.}, {\bf 39} (2013), no. 3, 889--906.

\bibitem[Ost14a]{Ost14a}  M.\,I.~Ostrovskii, On metric characterizations of the Radon-Nikod\'ym and related properties of Banach
spaces,  {\it J. Topol. Anal.},  {\bf 6} (2014), no. 3, 441--464.

\bibitem[Ost14b]{Ost14b}  M.\,I.~Ostrovskii, Radon-Nikod\'ym property and thick families of geodesics. {\it J. Math.
Anal. Appl.} {\bf 409} (2014), no. 2, 906--910.

\bibitem[Ost14c]{Ost14c} M.\,I.~Ostrovskii,
Metric characterizations of superreflexivity in terms of word
hyperbolic groups and finite graphs, {\it Anal. Geom. Metr.
Spaces} {\bf 2} (2014), 154--168.

\bibitem[Ost14d]{Ost14d} M.\,I.~Ostrovskii, Metric spaces nonembeddable into Banach spaces with the
Radon-Nikod\'ym property and thick families of geodesics, {\it
Fund. Math.}, {\bf 227} (2014), 85--95.

\bibitem[Ost14e]{Ost14e} M.\,I.~Ostrovskii,  Test-space characterizations of some classes of Banach spaces, in: {\it Algebraic Methods in Functional Analysis},
The Victor Shulman Anniversary Volume,  I.\,G.~Todorov,
L.~Turowska (Eds.), {\it Operator Theory: Advances and
Applications}, Vol. {\bf 233}, Birkh\"auser, Basel, 2014,
pp.~103--126.

\bibitem[Ost14f]{Ost14f} M.\,I.~Ostrovskii,
Embeddings of finitely generated groups into uniformly convex
Banach spaces, at: {\tt http://mathoverflow.net/questions/178898/}

\bibitem[Ost14+]{Ost14+} M.\,I.~Ostrovskii, Connections between metric characterizations of superreflexivity and the Radon-Nikod\'ym property for dual Banach
spaces, {\it Canad. Math. Bull.}, to appear, {\tt
arXiv:1406.0904}; {\tt http://dx.doi.org/10.4153/CMB-2014-049-9}.

\bibitem[Pan89]{Pan89} P.~Pansu, M\'etriques de Carnot-Carath\'eodory et
quasiisom\'etries des espaces sym\'etriques de rang un, {\it Ann.
of Math.} (2) {\bf 129} (1989), no. 1, 1--60.

\bibitem[Pel62]{Pel62} A.~Pe\l czy\'nski, A note on the paper of I.~Singer ``Basic
sequences and reflexivity of Banach spaces'', {\it Studia Math.},
{\bf 21} (1961/1962), 371--374.

\bibitem[Per79]{Per79} J.\,C.\,B.~Perrott,  Transfinite duals of Banach spaces and
ergodic super-properties equivalent to super-reflexivity. {\it
Quart. J. Math. Oxford} Ser. (2) {\bf 30} (1979), no. 117,
99--111.

\bibitem[Pis75]{Pis75} G.~Pisier, Martingales with values in uniformly convex
spaces, {\it Israel J. Math.}, {\bf 20}  (1975), 326--350.

\bibitem[Pis86]{Pis86} G.~Pisier,
Probabilistic methods in the geometry of Banach spaces, in: {\it
Probability and analysis (Varenna, 1985)}, 167--241, {\it Lecture
Notes in Math.}, {\bf 1206}, Springer, Berlin, 1986.

\bibitem[Pis14+]{Pis14+} G.~Pisier, {\it Martingales in Banach
spaces}, book in preparation, 2014.

\bibitem[Pta59]{Pta59} V.~Pt\'ak, Biorthogonal systems and
reflexivity of Banach spaces, {\it Czechoslovak Math. J.}, {\bf 9}
(1959), 319--326.

\bibitem[RR98]{RR98} Y.~Rabinovich, R.~Raz, Lower bounds on the
distortion of embedding finite metric spaces in graphs, {\it
Discrete Comput. Geom.}, {\bf 19} (1998), no. 1, 79--94.

\bibitem[Rib76]{Rib76} M.~Ribe, On uniformly homeomorphic normed spaces. {\it Ark. Mat.} {\bf
14} (1976), no. 2, 237--244.

\bibitem[Rib84]{Rib84} M.~Ribe, Existence of separable uniformly homeomorphic nonisomorphic Banach
spaces, {\it  Israel J. Math.}, {\bf 48} (1984), no. 2-3,
139--147.

\bibitem[SS70]{SS70} J.~J.~Sch\"affer, K.~Sundaresan,
Reflexivity and  the  girth  of spheres, {\it Math. Ann.}, {\bf
184} (1969/1970), 163--168.

\bibitem[Sem96]{Sem96} S.~Semmes, On the nonexistence of bi-Lipschitz
parameterizations and geometric problems about $A_\infty$-weights.
{\it Rev. Mat. Iberoamericana} {\bf 12} (1996), no. 2, 337--410.

\bibitem[Sin62]{Sin62} I.~Singer, Basic sequences and reflexivity of Banach spaces,
{\it Studia Math.}, {\bf 21} (1961/1962), 351--369.

\bibitem[Ste75]{Ste75} C.~Stegall,
The Radon-Nikod\'ym property in conjugate Banach spaces, {\it
Trans. Amer. Math. Soc.}, {\bf 206} (1975), 213--223.

\bibitem[Tex09]{Tex09} {\it Texas 2009 nonlinear problems}, open
problems suggested at the seminar ``Nonlinear geometry of Banach
spaces'', Texas A \&\ M University, August 2009. Available at:
{\tt
http://facpub.stjohns.edu/ostrovsm/Texas2009nonlinearproblems.pdf}

\bibitem[Wen97]{Wen97} J.~Wenzel, Superreflexivity and
$J$-convexity of Banach spaces, {\it Acta Math. Univ. Comenian.}
(N.S.), {\bf 66} (1997), 135--147.

\end{small}

\end{thebibliography}
\end{document}